\numberwithin{equation}{section}
\numberwithin{figure}{section}
\def\C{{\mathbb{C}}}
\def\Q{{\mathbb{Q}}}
\def\Z{{\mathbb{Z}}}
\def\R{{\mathbb{R}}}
\def\N{{\mathbb{N}}}
\def\cY{{\cal Y}}
\def \End{{\rm End}}
\def\qdim{{\rm qdim}}
\newcommand{\Lx}[0]{x^{j + 1} \frac{\partial}{\partial x}}
\newcommand{\Lo}[0]{x \frac{\partial}{\partial
x}}
\theoremstyle{definition}
\newtheorem{lemma}{Lemma}[section]
\newtheorem{theorem}[lemma]{Theorem}
\newtheorem{corollary}[lemma]{Corollary}
\newtheorem{proposition}[lemma]{Proposition}
\newtheorem{definition}[lemma]{Definition}
\newtheorem{remark}[lemma]{Remark}\usepackage{times}
\newtheorem*{CPMA}{Theorem A}
\newtheorem*{CPMB}{Theorem B}
\newtheorem*{CPMC}{Theorem C}
\newcommand{\cc}{{\mathcal C}}
\newcommand{\FPdim}{{\rm FPdim}\,}
\title{Fusion products of twisted modules in permutation orbifolds}
\author{Chongying Dong}
\affil{Department of Mathematics, University of
California, Santa Cruz, CA 95064 USA}
\author{Haisheng Li}
\affil{ Department of Mathematical Sciences, Rutgers University, Camden, NJ 08102 USA}
\author{Feng Xu}
\affil{Department of Mathematics, University of California, Riverside, CA 92521 USA}
\author{Nina Yu}
\affil{School of Mathematical Sciences, Xiamen University, Fujian 361005 CHINA}
\begin{document}

\title{Fusion products of twisted modules in permutation orbifolds}
\maketitle
\begin{abstract}
Abstract: Let $V$ be a vertex operator algebra, $k$ a positive integer
and $\sigma$ a permutation automorphism of the vertex operator algebra
$V^{\otimes k}$. In this paper, we determine the fusion product of
any $V^{\otimes k}$-module with any $\sigma$-twisted $V^{\otimes k}$-module.
\end{abstract}

\section{Introduction}
This paper is about permutation orbifold vertex operator algebras.
The theory of permutation orbifolds studies the representations of the tensor product vertex operator algebra
$V^{\otimes k}$ with the natural action of the symmetric group $S_{k}$ as an automorphism group, where
$V$ is a vertex operator algebra and $k$ is a positive integer.
The study of permutation orbifolds was initiated in \cite{BHS},
where the twisted sectors (modules), genus one characters and their modular
transformations, as well as the fusion rules for cyclic permutations for affine vertex operator algebras and the Virasoro vertex operator algebras
were studied. The genus one characters and modular transformation properties of permutation
orbifolds for a general rational conformal field theory were given in \cite{Ba}.

The study on permutation orbifolds for general vertex operator algebras
was started by Barron, Dong and Mason in \cite{BDM}, where for a general
vertex operator algebra $V$ with any positive integer $k$, an intrinsic connection
between  twisted modules for tensor product vertex operator algebra $V^{\otimes k}$ with respect to permutation automorphisms
and $V$-modules was found.
More specifically, let $\sigma$ be a $k$-cycle which is viewed as an automorphism of $V^{\otimes k}$.
Then, for any $V$-module $(W,Y_{W}(\cdot,z))$, a canonical $\sigma$-twisted $V^{\otimes k}$-module structure
on $W$ was obtained, which was denoted by $T_{\sigma}(W)$. 
Furthermore, it was proved therein that this gives rise to an isomorphism of the categories of weak, admissible
and ordinary $V$-modules and the categories of weak,
admissible and ordinary $\sigma$-twisted $V^{\otimes k}$-modules, respectively.
The $C_{2}$-cofiniteness of permutation orbifolds and general cyclic
orbifolds was established later in \cite{A1,A2,M1,M2}. On the other hand, an equivalence
of two constructions (see \cite{FLM}, \cite{Le}, \cite{BDM}) of twisted modules for permutation
orbifolds of lattice vertex operator algebras was given in \cite{BHL}.
The permutation orbifolds of the lattice vertex operator algebras
with $k=2$ and $k=3$ were extensively studied in \cite{DXY1, DXY2, DXY3}.
In particular, the fusion rules for the permutation orbifolds were
determined. On the other hand, a study on permutation orbifolds in the context of conformal nets was
given in \cite{KLX}, where irreducible representations of the cyclic
orbifolds were determined and fusion rules were given for $k=2$.

Note that intertwining operator and fusion rule for modules were introduced in \cite{FHL}, while
a theory of tensor products for modules for vertex operator algebras was developed and has been extensively studied
by Huang and Lepowsky (see \cite{HL1-2}, \cite{HL3}, \cite{H1}, \cite{H}).
 Intertwining operators among three twisted modules were studied by Xu (see \cite{Xu})
and a notion of tensor product for twisted modules was introduced in \cite{DLM1}.

In this paper, we study fusion products of $V^{\otimes k}$-modules with $\sigma$-twisted
$V^{\otimes k}$-modules for any $\sigma\in S_{k}$. More specifically,
by using the explicit construction of the $\sigma$-twisted $V^{\otimes k}$-modules due to \cite{BDM},
we completely determine the fusion products. To achieve this goal, we study fusion products of twisted modules
and obtain the associativity of the fusion product of twisted modules by using a theorem
of  \cite{KO} and \cite{CKM}. We also generalize and use a result of \cite{Li1} and \cite{Li2} on an analogue of Hom-functor.
More specifically, given any $\sigma$-twisted modules $W_{2}$ and $W_{3}$
for a vertex operator algebra $V$ with a finite order automorphism $\sigma$,
we introduce the space $\mathcal{H}(W_{2},W_{3})$
of generalized intertwining operators from $W_{2}$ to $W_{3}$ and prove that
$\mathcal{H}(W_{2},W_{3})$ has a natural weak $V$-module structure.
Furthermore, we prove that for any $V$-module $W_{1},$ the
space $\text{Hom}_{V}(W_{1},\mathcal{H}(W_{2},W_{3}))$
is naturally isomorphic to the space of intertwining operators of
type $\binom{W_3}{W_{1}\ W_{2}}$. As one of the main results,
we show that for any $V$-modules $M,N,W$, there is a canonical linear isomorphism
from the space $I_{V}\binom{W}{M\ N}$ of intertwining operators of the indicated type to the space $I_{V^{\otimes k}}\binom{T_{\sigma}(W)}{M^1\ T_{\sigma}(N)}$ of intertwining operators, where
$M^1=M\otimes V^{\otimes (k-1)}$ is viewed as a $V^{\otimes k}$-module.

Now, we describe the contents of this paper with more details.
Let $V$ be any vertex operator algebra and let $k$ be a positive integer.
Recall from \cite{BDM} that for $\sigma=(12\cdots k)\in \text{Aut}(V^{\otimes k})$,
every $\sigma$-twisted $V^{\otimes k}$-module is isomorphic to
$T_{\sigma}(W)$ for some $V$-module $W$ and $T_{\sigma}(W)$ is irreducible if and only if $W$ is irreducible.
On the other hand,  from \cite{FHL},  irreducible $V^{\otimes k}$-modules are
classified as $M_1\otimes \cdots \otimes M_k$, where $M_1,\dots,M_k$ are irreducible $V$-modules.
Our first goal is to determine the fusion product
$$ (M_1\otimes \cdots \otimes M_k)\boxtimes T_{\sigma}(W).$$
We first consider the special case where $M_j=V$ for $2\le j\le k$.
While it is natural to start with a special case, actually the main motivation comes from the following observation
\begin{align}\label{product=iterate}
\left(M_{1}\otimes\cdots\otimes M_{k}\right)\simeq
\boxtimes_{i=1}^{k}(V^{\otimes(i-1)}\otimes M_{i}\otimes V^{\otimes(k-i)}).
\end{align}

The following is our first main result:

\begin{CPMA}\label{t:A}
Let $V$ be any vertex operator algebra and set $\sigma=(12\cdots k)\in \text{Aut}(V^{\otimes k})$
with $k$ a positive integer.
Let $M$ and $N$ be any $V$-modules such that
a tensor product $M\boxtimes N$ exists. Then
$$\left(M\otimes V^{\otimes (k-1)}\right)\boxtimes_{V^{\otimes k}}T_{\sigma}(N)
\simeq T_{\sigma}\left(M\boxtimes N\right).$$
\end{CPMA}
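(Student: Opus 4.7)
I would prove Theorem~A by the Yoneda route: identify $T_\sigma(M\boxtimes N)$ as the object representing the functor $U\mapsto I_{V^{\otimes k}}\binom{U}{M^1\ T_\sigma(N)}$ on the category of $\sigma$-twisted $V^{\otimes k}$-modules, where $M^{1}=M\otimes V^{\otimes(k-1)}$, and then read off the fusion product. Two observations justify restricting to $\sigma$-twisted modules: a nontrivial intertwining operator with an untwisted and a $\sigma$-twisted input forces its output module to carry a $\sigma$-twisted structure, so $M^{1}\boxtimes T_\sigma(N)$ is automatically $\sigma$-twisted; and by the Barron--Dong--Mason equivalence, every such module has the form $T_\sigma(W)$ for a unique-up-to-isomorphism $V$-module $W$.

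\textbf{Assembly of natural isomorphisms.} The heart of the argument is to chain three natural isomorphisms indexed by the $V$-module $W$. First is the canonical isomorphism $\Phi_W\colon I_V\binom{W}{M\ N}\xrightarrow{\sim} I_{V^{\otimes k}}\binom{T_\sigma(W)}{M^{1}\ T_\sigma(N)}$ announced in the introduction as one of the main results of the paper, established earlier using the generalized Hom-functor $\mathcal{H}(W_2,W_3)$ together with the explicit BDM model of $T_\sigma(W)$. Second is the defining universal property of the untwisted fusion product, $I_V\binom{W}{M\ N}\cong \mathrm{Hom}_V(M\boxtimes N,\,W)$, which is available by hypothesis. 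Third is the BDM equivalence of categories, which supplies $\mathrm{Hom}_V(X,\,W)\cong \mathrm{Hom}_{V^{\otimes k}}(T_\sigma(X),\,T_\sigma(W))$, applied with $X=M\boxtimes N$. Composing yields, naturally in $W$, $I_{V^{\otimes k}}\binom{T_\sigma(W)}{M^{1}\ T_\sigma(N)}\cong \mathrm{Hom}_{V^{\otimes k}}(T_\sigma(M\boxtimes N),\,T_\sigma(W))$. Pushing the universal intertwining operator $\mathcal{Y}_{M,N}$ of type $\binom{M\boxtimes N}{M\ N}$ through $\Phi_{M\boxtimes N}$ yields a distinguished intertwining operator $\widetilde{\mathcal{Y}}$ of type $\binom{T_\sigma(M\boxtimes N)}{M^{1}\ T_\sigma(N)}$, and the composite bijection above is implemented by post-composing $\widetilde{\mathcal{Y}}$ with morphisms $T_\sigma(M\boxtimes N)\to T_\sigma(W)$. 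This is precisely the universal property characterizing the fusion product, so $M^{1}\boxtimes_{V^{\otimes k}} T_\sigma(N)\simeq T_\sigma(M\boxtimes N)$.

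\textbf{Main obstacle.} The serious work lies in the construction of $\Phi_W$ and the verification of its naturality, but that is the main technical content of the earlier sections and not part of the proof of Theorem~A itself. Given $\Phi_W$, the only loose end to tie off is the naturality identity $\Phi_{W}(f\circ \mathcal{Y}_{M,N})=T_\sigma(f)\circ \widetilde{\mathcal{Y}}$ for every $V$-module homomorphism $f\colon M\boxtimes N\to W$; this follows directly from the functoriality of the Hom-functor $\mathcal{H}$ used to define $\Phi$. Once this naturality square is in hand, Yoneda delivers the asserted isomorphism of $V^{\otimes k}$-modules, completing the proof.
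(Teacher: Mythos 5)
Your proposal is correct and follows essentially the same route as the paper: the paper's proof of Theorem A (Theorem \ref{t3.6}) pushes the universal intertwining operator $\mathcal{Y}$ of type $\binom{M\boxtimes N}{M\ N}$ through the isomorphism $\pi$ of Corollary \ref{summary} (your $\Phi_W$, built in Sections 4--5 from the generalized Hom-functor $\mathcal{H}$ and the explicit formula $\overline{\mathcal{Y}}(w^1,z)=\mathcal{Y}(\Delta_k(z)w,z^{1/k})$) and verifies the universal property via the Barron--Dong--Mason equivalence, exactly as in your chain of natural isomorphisms. The naturality square you flag as the ``loose end'' is immediate from the explicit formula for $\pi$, since a $V$-module homomorphism $f$ commutes with $\Delta_k(z)$ and equals $T_\sigma(f)$ as a linear map; the paper leaves this implicit.
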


To prove this theorem, we generalize and use a result of \cite{Li1} and \cite{Li2}
on an analogue of the classical Hom-functor.
Let $V$ be any vertex operator algebra with a finite order automorphism $\tau$.
Let $W_{2}$ and $W_{3}$ be $\tau$-twisted $V$-modules.
We first introduce the space $\mathcal{H}(W_{2},W_{3})$
of what are called generalized intertwining operators from $W_{2}$ to $W_{3}$,
where the definition  of a generalized intertwining operator $\phi(x)$ from $W_{2}$ to $W_{3}$
captures the essential properties of $I(w,x)$ for an intertwining operator $I(\cdot,x)$ of type
$\binom{W_3}{W\ W_2}$ with $W$ a $V$-module and with $w\in W$.
More specifically, in addition to the lower truncation condition, $\phi(x)$ satisfies the conditions that
$[L(-1),\phi(x)]=\frac{d}{dx}\phi(x)$ and that
for every $v\in V$, there exists a nonnegative integer $n$ such that
$$(x_1-x)^nY_{W_3}(v,x_1)\phi(x)=(x_1-x)^n\phi(x)Y_{W_2}(v,x_1).$$
We prove that
$\mathcal{H}(W_{2},W_{3})$ has a natural weak $V$-module structure and
that for any $V$-module $W_{1},$ the
linear space $\text{Hom}_{V}(W_{1},\mathcal{H}(W_{2},W_{3}))$
is naturally isomorphic to the space of intertwining operators of
type $\binom{W_3}{W_{1}\ W_{2}}$.
Coming back to vertex operator algebra $V^{\otimes k}$, let $M,N,W$ be $V$-modules.
By exploiting certain techniques from \cite{BDM}  we obtain a linear isomorphism between
$I_{V^{\otimes k}}\binom{T_{\sigma}(W)}{M^{1}\ T_{\sigma}(N)}$
and $I_{V}\binom{W}{M\ N}$,  where
$M^1=M\otimes V^{\otimes (k-1)}$ . Furthermore, we use this natural isomorphism
to obtain the fusion product isomorphism relation  of Theorem A.

As it was indicated before, the main idea is to prove Theorem A first and then use the relation (\ref{product=iterate}) and
 the associativity of the tensor product to deal with the general case.
Using Theorem A and a general result (Lemma \ref{sigma-tau}), we show
$$\left(V^{\otimes (r-1)}\otimes M\otimes V^{\otimes (k-r)}\right)\boxtimes_{V^{\otimes k}}T_{\sigma}(N)
\simeq \left(M\otimes V^{\otimes (k-1)}\right)\boxtimes_{V^{\otimes k}}T_{\sigma}(N)
\simeq T_{\sigma}\left(M\boxtimes N\right)$$
for $2\le r\le k$.

On the other hand, to carry out this plan we shall need the associativity of the tensor product for twisted modules.
Let $V$ be a regular and self-dual vertex operator algebra of CFT type and
let $G$ be a finite abelian automorphism group of $V.$ For any
$g$-twisted $V$-module $M$ and $h$-twisted $V$-module $N$ with $g,h\in G$, we establish that
a tensor product of $M$ and $N$ exists and the tensor product functor is associative.

This result is achieved by using a theorem of \cite{KO} and \cite{CKM} and a theorem of \cite{H}, where
our approach is essentially the same as that in \cite{CKM}.
First, it was proved in \cite{CM,M2} that $V^{G}$ is also a regular and self-dual vertex operator algebra
of CFT type. Then by  \cite{H}, the $V^{G}$-module category ${\cal C}_{V^{G}}$
is a modular tensor category. On the other hand,  a fusion category ${\rm Rep}(V)$ was introduced in
\cite{KO} and \cite{CKM} as a subcategory of ${\cal C}_{V^{G}}$.
This in particular implies that the tensor product functor for ${\rm Rep}(V)$ is associative.
It turns out that for any finite order automorphism
$g$ of $V,$ a $g$-twisted $V$-module is an object in ${\rm Rep}(V).$
Then it is shown that the tensor product of two twisted modules in the sense of
\cite{Xu} and \cite{DLM1} exists and is equivalent to the tensor product in the category
${\rm Rep}(V).$ Consequently, we obtain the existence and the associativity of the tensor product
in the sense of  \cite{Xu} and \cite{DLM1}. One can easily check Theorem A holds for intertwining operators with
logarithm functions with the same proof.

The following is the second main theorem of this paper:

\begin{CPMB}\label{t:B}
Let $V$ be a regular and self-dual vertex operator algebra of CFT type, and  let $M_{1},...,M_{k},N$ be $V$-modules.
Let $\sigma$ be a $k$-cycle permutation.   Then
\[
\left(M_{1}\otimes\cdots\otimes M_{k}\right)\boxtimes_{V^{\otimes k}}T_{\sigma}\left(N\right)\simeq T_{\sigma}\left(M_{1}\boxtimes_{V}\cdots\boxtimes_{V}M_{k}\boxtimes_{V}N\right).
\]
\end{CPMB}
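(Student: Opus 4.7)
The plan is to reduce Theorem B to Theorem A by iterating it, using the decomposition (\ref{product=iterate}) and the associativity of the fusion product for (possibly twisted) modules of $V^{\otimes k}$. Since $V$ is regular, self-dual and of CFT type, so is $V^{\otimes k}$, and hence the tensor product theorem for twisted modules established earlier in the paper (applied with $\langle\sigma\rangle$ as the finite cyclic automorphism group of $V^{\otimes k}$) guarantees that fusion products of mixed untwisted and twisted $V^{\otimes k}$-modules exist and that the tensor product functor is associative. Moreover, under these hypotheses Huang's theorem makes $\mathcal{C}_V$ a modular, hence braided, tensor category, so the iterated fusion product $M_1\boxtimes_V\cdots\boxtimes_V M_k\boxtimes_V N$ is unambiguously defined up to isomorphism.

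Granted these structural facts, the argument runs as follows. By (\ref{product=iterate}) and associativity,
\[
(M_1\otimes\cdots\otimes M_k)\boxtimes_{V^{\otimes k}} T_\sigma(N)
\simeq \left(\boxtimes_{i=1}^{k}(V^{\otimes(i-1)}\otimes M_i\otimes V^{\otimes(k-i)})\right)\boxtimes_{V^{\otimes k}} T_\sigma(N).
\]
I would re-bracket so that $T_\sigma(N)$ is fused first with the $i=1$ factor and then successively with the $i=2,\ldots,k$ factors. The first fusion is precisely Theorem A, giving
\[
(M_1\otimes V^{\otimes(k-1)})\boxtimes_{V^{\otimes k}} T_\sigma(N)\simeq T_\sigma(M_1\boxtimes_V N).
\]
For each subsequent $r$ with $2\le r\le k$, Lemma \ref{sigma-tau} applied to the $V$-module currently sitting inside $T_\sigma(\cdot)$ absorbs the factor $V^{\otimes(r-1)}\otimes M_r\otimes V^{\otimes(k-r)}$, extending the inside module by one more $\boxtimes_V$-factor. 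Induction on $r$ then produces $T_\sigma(M_k\boxtimes_V\cdots\boxtimes_V M_1\boxtimes_V N)$, which by the braiding on $\mathcal{C}_V$ is isomorphic to $T_\sigma(M_1\boxtimes_V\cdots\boxtimes_V M_k\boxtimes_V N)$, as claimed.

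The only substantive obstacle is the background associativity of the tensor product for twisted $V^{\otimes k}$-modules, since without it the re-bracketing above is illegitimate; this is precisely what forces the stronger hypotheses on $V$ in Theorem B that are absent from Theorem A. This associativity is supplied by the preliminary results in the paper, which realize twisted modules as objects of $\mathrm{Rep}(V^{\otimes k})\subset\mathcal{C}_{(V^{\otimes k})^{\langle\sigma\rangle}}$ through the Kirillov--Ostrik / Creutzig--Kanade--McRae framework and invoke Huang's modularity theorem for the orbifold, transporting the associativity back to the Xu--Dong--Lepowsky--Mason tensor product. Once that machinery is in hand, the proof of Theorem B is the formal iteration sketched above.
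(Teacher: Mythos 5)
Your proposal is correct and takes essentially the same route as the paper's proof: the paper likewise combines Theorem A (Theorem \ref{t3.6}) with Lemma \ref{sigma-tau} to obtain $(V^{\otimes(i-1)}\otimes M\otimes V^{\otimes(k-i)})\boxtimes T_{\sigma}(N)\simeq T_{\sigma}(M\boxtimes N)$ for each slot $i$, then invokes the decomposition $M_{1}\otimes\cdots\otimes M_{k}\simeq \boxtimes_{i=1}^{k}(V^{\otimes(i-1)}\otimes M_{i}\otimes V^{\otimes(k-i)})$ together with the associativity of Corollary \ref{t3.5} (supplied by the ${\rm Rep}(V^{\otimes k})$ machinery of Section 3) to iterate, exactly as you do. Your only addition is making explicit the reordering from $T_{\sigma}(M_{k}\boxtimes\cdots\boxtimes M_{1}\boxtimes N)$ to the stated order via the braiding on ${\cal C}_{V}$, a point the paper leaves implicit and which can also be handled by the symmetry of fusion rules $N_{M_{1}\ M_{2}}^{M_{3}}=N_{M_{2}\ M_{1}}^{M_{3}}$ recorded at the end of Section 2.
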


Now, let $\sigma\in S_k$ be an arbitrary permutation. A classical fact is that the conjugacy class of $\sigma$ is uniquely
determined by a partition $\kappa$ of $k$, i.e.,
$\kappa=(k_1,\dots,k_s)$ is a sequence of positive integers such that
$$k_1\ge k_2\ge\cdots \ge k_s,\   \   \   \   k_1+k_2+\cdots +k_s=k.$$
Define $\sigma_{\kappa}=\sigma_1\cdots \sigma_s$, where $\sigma_1=(12\cdots k_1)$, $\sigma_2=((k_1+1)\cdots (k_1+k_2))$,  and so on. Then $\sigma_{\kappa}=\mu\sigma\mu^{-1}$ for some permutation $\mu$.
It is known that the categories of $\sigma$-twisted modules and $\sigma_{\kappa}$-twisted modules are isomorphic canonically. Using \cite{BDM} and \cite{FHL}, we show that
irreducible $\sigma_{\kappa}$-twisted $V^{\otimes k}$-modules are classified as
\begin{align}
T_{\sigma_{\kappa}}(N_1,\dots,N_s):= T_{\sigma_1}(N_1)\otimes \cdots \otimes T_{\sigma_s}(N_s),
\end{align}
where $N_1,\dots,N_s$ are irreducible $V$-modules. Furthermore,
irreducible $\sigma$-twisted $V^{\otimes k}$-modules are classified as
\begin{align}
T_{\sigma}(N_1,\dots,N_s):=  \left(T_{\sigma_{\kappa}}(N_1,\dots,N_s)\right)^{\mu},
\end{align}
where for any $\sigma_{\kappa}$-twisted $V^{\otimes k}$-module $(W,Y_W)$,
$(W^\mu,Y_{W}^{\tau})$ is a $\sigma$-twisted $V^{\otimes k}$-module with $W^\mu=W$ as a vector space and
$$Y_{W}^{\mu}(a,x)=Y_{W}(\mu (a),x)\   \   \   \mbox{ for }a\in V^{\otimes k}.$$

The following is the third main result of this paper:

\begin{CPMC}\label{t:C}
 Assume that $V$ is a regular and self-dual vertex operator algebra of CFT type. Let $k$ be a positive integer and $\sigma\in S_k\subset \text{Aut}(V^{\otimes k})$.
Suppose
$\mu \sigma\mu^{-1}=\sigma_{\kappa}$, where
$\kappa=(k_1,\dots,k_s)$ is a partition of $k$ and $\mu\in S_k$.
Let $M_{1},\dots,M_{k}$ and $N_{1},\dots,N_{s}$ be irreducible $V$-modules. Then
\begin{align}
\left(M_{1}\otimes\cdots\otimes M_{k}\right)\boxtimes T_{\sigma_{\kappa}}(N_{1},N_2,\dots,N_s)\simeq
T_{\sigma_{\kappa}}( M^{[k_1]} \boxtimes N_{1},\dots, M^{[k_s]} \boxtimes N_{s})
\end{align}
as a $\sigma_{\kappa}$-twisted $V^{\otimes k}$-module, and
\begin{align}
\left(M_{1}\otimes\cdots\otimes M_{k}\right)^{\mu}\boxtimes T_{\sigma}(N_{1},N_2,\dots,N_s)
\simeq T_{\sigma}( M^{[k_1]} \boxtimes N_{1},\dots, M^{[k_s]} \boxtimes N_{s})
\end{align}
as a $\sigma$-twisted $V^{\otimes k}$-module,
where $M^{[k_1]}=\boxtimes_{i=1}^{k_1}M_i$, $M^{[k_2]}=\boxtimes_{i=k_1+1}^{k_1+k_2}M_i$, and so on,
are $V$-modules.
\end{CPMC}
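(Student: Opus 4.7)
The plan is to reduce Theorem C to Theorem B via a block decomposition along the cycle type. Since $\sigma_{\kappa}=\sigma_1\sigma_2\cdots\sigma_s$ is a product of cycles with disjoint supports on blocks of sizes $k_1,\dots,k_s$, the ambient vertex operator algebra factors as $V^{\otimes k}\simeq V^{\otimes k_1}\otimes\cdots\otimes V^{\otimes k_s}$, with $\sigma_{\kappa}$ acting as a single $k_i$-cycle on the $i$-th factor. By definition, $T_{\sigma_{\kappa}}(N_1,\dots,N_s)$ is the outer tensor product $T_{\sigma_1}(N_1)\otimes\cdots\otimes T_{\sigma_s}(N_s)$, each $T_{\sigma_i}(N_i)$ being a $\sigma_i$-twisted $V^{\otimes k_i}$-module; likewise $M_1\otimes\cdots\otimes M_k$ splits as the outer tensor product of $V^{\otimes k_i}$-modules, the $i$-th factor being $M_{k_1+\cdots+k_{i-1}+1}\otimes\cdots\otimes M_{k_1+\cdots+k_i}$.

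The crux is a factorization of the fusion product across these blocks: for vertex operator algebras $U_1,U_2$, an untwisted $U_i$-module $A_i$, and a twisted $U_i$-module $B_i$, one expects an isomorphism $(A_1\otimes A_2)\boxtimes_{U_1\otimes U_2}(B_1\otimes B_2)\simeq(A_1\boxtimes_{U_1}B_1)\otimes(A_2\boxtimes_{U_2}B_2)$. We would prove this by extending the classical factorization of intertwining operator spaces from \cite{FHL} to the twisted setting, using rationality and $C_2$-cofiniteness to guarantee the relevant fusion products exist. Applying this factorization iteratively across the $s$ blocks reduces the left-hand side to $\bigotimes_{i=1}^{s}\bigl(\text{(block $i$ of the $M$'s)}\boxtimes_{V^{\otimes k_i}}T_{\sigma_i}(N_i)\bigr)$, and Theorem B applied to the $k_i$-cycle $\sigma_i$ identifies each factor with $T_{\sigma_i}(M^{[k_i]}\boxtimes N_i)$. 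The outer tensor product of these is, by definition, $T_{\sigma_{\kappa}}(M^{[k_1]}\boxtimes N_1,\dots,M^{[k_s]}\boxtimes N_s)$, which gives the first isomorphism.

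For the second isomorphism, use the definition $T_{\sigma}(N_1,\dots,N_s)=T_{\sigma_{\kappa}}(N_1,\dots,N_s)^{\mu}$ and recall that $(M_1\otimes\cdots\otimes M_k)^{\mu}$ denotes the same underlying space with action precomposed by $\mu$. The functor $W\mapsto W^{\mu}$ is an equivalence between the category of $\sigma_{\kappa}$-twisted $V^{\otimes k}$-modules and that of $\sigma$-twisted $V^{\otimes k}$-modules, and a self-equivalence of the untwisted category implemented by permuting tensor factors. A direct check shows that this functor commutes with fusion products, $W_1^{\mu}\boxtimes W_2^{\mu}\simeq(W_1\boxtimes W_2)^{\mu}$, since the substitution $a\mapsto\mu(a)$ transports an intertwining operator in one setting to one in the other with the same pole structure and lower-truncation conditions. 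Applying this equivalence to the first isomorphism immediately delivers the second.

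The main obstacle I expect is the twisted factorization lemma used in the second paragraph. In the untwisted case this is completely standard, but in the twisted case one must verify that every $\sigma_{\kappa}$-twisted intertwining operator among outer tensor product modules factors as a finite sum of outer tensor products of $\sigma_i$-twisted intertwining operators on each block, and that the resulting map is bijective. This should follow from complete reducibility of twisted modules over each $V^{\otimes k_i}$ (a consequence of rationality) together with a density argument based on the twisted Jacobi identity, but it is the one place in the proof where a genuine verification, rather than formal manipulation or direct appeal to Theorem B, is needed.
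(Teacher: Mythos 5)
Your proposal is correct and follows essentially the same route as the paper: the blockwise factorization of the fusion product you identify as the crux is exactly the paper's Proposition \ref{modules-twisted-modules-tensor-product}, which is proved just as you predict, via a twisted extension of the factorization of intertwining-operator spaces (Theorem \ref{ADL-twisted-version}, the paper citing \cite{ADL} rather than \cite{FHL}) together with $\tau$-rationality and the tautological tensor product construction, after which Theorem B is applied on each block. Your second isomorphism also matches the paper, which uses the conjugation compatibility $W\boxtimes T^{\mu}\simeq (W^{\mu^{-1}}\boxtimes T)^{\mu}$ of Proposition \ref{module-tensor-twisted-module-congugation}, equivalent to your claim that $W\mapsto W^{\mu}$ commutes with fusion.
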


We can replace the regularity assumption in Theorems B and C by the associativity assumption of tensor product in $(V^{\otimes k})^{\left\langle \sigma\right\rangle} $-module category, which is what we really need.

This paper is organized as follows. In Section 2, we review some basics
on vertex operator algebras, we also present the analytic function
version of the twisted Jacobi Identity. In Section 3, we prove that
for a regular, self-dual vertex operator algebra $V$ of CFT type
and a finite abelian automorphism group $G$ of $V$, the tensor product
$M\boxtimes_{V}N$ for any $g$-twisted $V$-module $M$ and $h$-twisted
$V$-module $N$ exists and is associative for $g,h\in G.$ In Section 4, we present the weak $V$-module
$\mathcal{H}(M_2,M_3)$ of generalized intertwining operators from a $\sigma$-twisted $V$-module $M_2$ to
another $\sigma$-twisted $V$-module $M_3$.
In Section 5,  for any $V$-modules $M,N,W$, we construct
a linear isomorphism between the spaces  $I_{V}\binom{W}{M\ N}$ and
$I_{V^{\otimes k}}\binom{T_{\sigma}(W)}{M^{1}\ T_{\sigma}(N)}$, and 
prove Theorem A:
$\left(M\otimes V^{\otimes (k-1)}\right)\boxtimes T_{\sigma}(N)=T_{\sigma}\left(M\boxtimes_{V}N\right)$.
Finally, in Section
6, by applying associativity of the tensor product $\boxtimes$ in
Section 3, we obtain Theorems B and C.

{\bf Notations:}  For this paper, we use $\Z_+$ to denote the set of nonnegative integers.

\section{Basics}

Let $\left(V,Y,\mathbf{1},\omega\right)$ be a vertex operator algebra in the sense of
\cite{FLM} and \cite{FHL} (cf. \cite{Bo}). First, we recall the definitions of an automorphism
of $V$ and a $g$-twisted $V$-module for  a finite order automorphism $g$ of $V$ (see \cite{FLM}, \cite{DLM1}).

\begin{definition} An \emph{automorphism} of a vertex operator
algebra $V$ is a linear isomorphism $g$ of $V$ such that $g\left(\omega\right)=\omega$
and $gY\left(v,z\right)g^{-1}=Y\left(gv,z\right)$ for any $v\in V$.
Denote by $\mbox{Aut}\left(V\right)$ the group of all automorphisms
of $V$. \end{definition}

A straightforward consequence is that every automorphism $g$ of $V$ preserves the vacuum vector ${\bf 1}$,
i.e., $g({\bf 1})={\bf 1}$. On the other hand,
a standard fact is that for any subgroup $G\le\mbox{Aut}\left(V\right)$, the set of $G$-fixed points
$$V^{G}:=\left\{ v\in V\ |\ g\left(v\right)=v \  \mbox{ for  }g\in G\right\} $$
is a vertex operator subalgebra.

Let $g$ be a finite order automorphism of $V$ with a period $T$ in the sense that $T$ is a positive integer
such that $g^T=1$. Then
\begin{align}
V=\oplus_{r=0}^{T-1}V^r,
\end{align}
where $V^{r}=\left\{ v\in V\ |\  gv=e^{2\pi i r/T}v\right\} $ for $r\in \Z$.
Note that for $r,s\in \Z$, $V^r=V^s$ if $r\equiv s\ ({\rm mod}\; T)$.

\begin{definition} A \emph{weak $g$-twisted $V$-module} is
a vector space $M$ with a linear map
\begin{align*}
Y_{M}(\cdot,z):\   & V\to\left(\text{End}M\right)[[z^{1/T},z^{-1/T}]]\\
 & v\mapsto Y_{M}\left(v,z\right)=\sum_{n\in \frac{1}{T}\mathbb{Z}}v_{n}z^{-n-1}\ \left(v_{n}\in\mbox{End}M\right),
\end{align*}
which satisfies the following conditions: For all $u\in V^{r}$,
$v\in V$, $w\in M$ with $0\le r\le T-1$,

\[
Y_{M}(u,z)=\sum_{n\in \frac{r}{T}+\mathbb{Z}}u_{n}z^{-n-1},
\]

\[
u_{n}w=0\  \   \mbox{ for $n$ sufficiently large},
\]

\[
Y_{M}(\mathbf{1},z)=Id_{M},
\]

\[
z_{0}^{-1}\text{\ensuremath{\delta}}\left(\frac{z_{1}-z_{2}}{z_{0}}\right)Y_{M}(u,z_{1})Y_{M}(v,z_{2})
-z_{0}^{-1}\delta\left(\frac{z_{2}-z_{1}}{-z_{0}}\right)Y_{M}(v,z_{2})Y_{M}(u,z_{1})
\]

\begin{equation}
=z_{2}^{-1}\left(\frac{z_{1}-z_{0}}{z_{2}}\right)^{-\frac{r}{T}}\delta\left(\frac{z_{1}-z_{0}}{z_{2}}\right)Y_{M}\left(Y\left(u,z_{0}\right)v,z_{2}\right),\label{Jacobi for twisted V-module}
\end{equation}
where $\delta\left(z\right)=\sum_{n\in\mathbb{Z}}z^{n}$.
\end{definition}

Notice that  for any weak $g$-twisted $V$-module $(M,Y_M)$, from definition we have
\begin{align}
z^{\frac{r}{T}}Y_M(u,z)w\in M((z))\   \   \   \   \mbox{ for }u\in V^r,\ r\in \Z,\ w\in M.
\end{align}
On the other hand, we have (see \cite{DL})
$$z_{2}^{-1}\left(\frac{z_{1}-z_{0}}{z_{2}}\right)^{-\frac{r}{T}}\delta\left(\frac{z_{1}-z_{0}}{z_{2}}\right)
=z_{1}^{-1}\left(\frac{z_{2}+z_{0}}{z_{1}}\right)^{\frac{r}{T}}\delta\left(\frac{z_{1}-z_{0}}{z_{2}}\right).$$

\begin{remark} \label{equiv. of Jacobi Idenitty} It is well known that the twisted Jacobi
identity (\ref{Jacobi for twisted V-module}) is equivalent to the
following associativity relation
\begin{equation}
\left(z_{0}+z_{2}\right)^{l+\frac{r}{T}}Y_{M}\left(u,z_{0}+z_{2}\right)Y_{M}\left(v,z_{2}\right)w=\left(z_{2}+z_{0}\right)^{l+\frac{r}{T}}Y_{M}\left(Y\left(u,z_{0}\right)v,z_{2}\right)w\label{associativity formula}
\end{equation}
for $w\in M$, where $l$ is a nonnegative integer such that $z^{l+\frac{r}{T}}Y_{M}\left(u,z\right)w$
involves only nonnegative integral powers of $z$, and commutator relation
\begin{align}\label{twisted-commutator-formula}
 & \left[Y_{M}\left(u,z_{1}\right),Y_{M}\left(v,z_{2}\right)\right]\nonumber\\
 =\ & \text{Res}_{z_{0}}z_{2}^{-1}\left(\frac{z_{1}-z_{0}}{z_{2}}\right)^{-\frac{r}{T}}\delta\left(\frac{z_{1}-z_{0}}{z_{2}}\right)Y_{M}\left(Y\left(u,z_{0}\right)v,z_{2}\right).
\end{align}

From \cite{DLMi} (cf. Definition 2.7 and Lemma 2.9 of \cite{LTW}), (\ref{associativity formula})
can be equivalently replaced with the property that for $u,v\in V$,
there exists a nonnegative integer $m$ such that

\[
(z_{1}-z_{2})^{m}Y_{M}(u,z_{1})Y_{M}(v,z_{2})\in \text{Hom}\left(M,M((z_{1}^{\frac{1}{T}},z_{2}^{\frac{1}{T}}))\right)
\]
and
\[
z_{0}^{m}Y_{M}\left(Y(u,z_{0})v,z_{2}\right)=(z_{1}-z_{2})^{m}Y_{M}(u,z_{1})Y_{M}(v,z_{2})|_{z_{1}^{\frac{1}{T}}=(z_{2}+z_{0})^{\frac{1}{T}}.}
\]
\end{remark}

\begin{definition}An \emph{admissible $g$-twisted $V$-module}
is a weak $g$-twisted module with a $\frac{1}{T}\mathbb{Z}_{+}$-grading $M=\oplus_{n\in\frac{1}{T}\mathbb{Z}_{+}}M(n)$
such that $u_{m}M\left(n\right)\subset M\left(\mbox{wt}u-m-1+n\right)$
for homogeneous $u\in V$ and $m,n\in\frac{1}{T}\mathbb{Z}.$ $ $
\end{definition}

Note that if $M=\oplus_{n\in\frac{1}{T}\mathbb{Z}_{+}}M(n)$ is an irreducible admissible $g$-twisted $V$-module,
then there is a complex number
$\lambda_{M}$ such that $L(0)|_{M(n)}=\lambda_{M}+n$ for all $n.$
As a convention, we assume $M(0)\ne0$, and $\lambda_{M}$ is called the weight
or conformal weight of $M.$

\begin{definition}A $g$-\emph{twisted $V$-module} is a weak $g$-twisted $V$-module
$M$ which carries a $\mathbb{C}$-grading induced by the spectrum
of $L(0)$ where $L(0)$ is the component operator of $Y(\omega,z)=\sum_{n\in\mathbb{Z}}L(n)z^{-n-2}.$
That is, we have $M=\bigoplus_{\lambda\in\mathbb{C}}M_{\lambda},$
where $M_{\lambda}=\left\{ w\in M\ |\  L(0)w=\lambda w\right\} $. Moreover, it is required that
$\dim M_{\lambda}<\infty$ for all $\lambda$  and for any fixed $\lambda_0,$ $M_{\frac{n}{T}+\lambda_0}=0$
for all small enough integers $n.$
\end{definition}

In case $g=1$, we recover the notions of weak, ordinary and admissible
$V$-modules (see \cite{DLM2}).

\begin{definition}A vertex operator algebra $V$ is said to be \emph{$g$-rational}
if the admissible $g$-twisted module category is semisimple. In particular, $V$
is said to be \emph{rational} if $V$ is $1$-rational. \end{definition}

It was proved in \cite{DLM2} that if $V$ is a $g$-rational vertex
operator algebra, then there are only finitely many irreducible admissible
$g$-twisted $V$-modules up to isomorphism and any irreducible
admissible $g$-twisted $V$-module is ordinary.

\begin{definition}A vertex operator algebra $V$ is said to be \emph{regular}
if every weak $V$-module $M$ is a direct sum of irreducible ordinary
$V$-modules. \end{definition}

\begin{definition} A vertex operator algebra $V$ is said to be {\em $C_{2}$-cofinite}
if $V/C_{2}(V)$ is finite dimensional, where $C_{2}(V)={\rm span}\{ u_{-2}v\ |\ u,v\in V\}.$
\end{definition}

A vertex operator algebra $V=\oplus_{n\in\mathbb{Z}}V_{n}$ is said
to be of {\em CFT type} if $V_{n}=0$ for all negative integers $n$ and $V_{0}=\mathbb{C}{\bf 1}$.

\begin{remark} We here recall some known results we shall use extensively.

(1) It was proved in  \cite{DLM2,Li3} that
if $V$ is a $C_{2}$-cofinite vertex operator algebra, then $V$ has only finitely many irreducible admissible modules
up to isomorphism.

(2) Assume that $V$ is rational and $C_{2}$-cofinite. It was proved in \cite{ADJR} that  $V$ is $g$-rational
 for any finite automorphism $g$, and on the other hand, it was proved in \cite{DLM3} that $\lambda_{M}$ is
a rational number for every irreducible $g$-twisted $V$-module $M$.

(3) It is shown in \cite{KL} and \cite{ABD} that for CFT type vertex operator algebras,   regularity is equivalent to rationality and
$C_2$-cofiniteness.
\end{remark}

\begin{definition} Let $M=\bigoplus_{n\in\frac{1}{T}\mathbb{Z}_{+}}M(n)$
be an admissible $g$-twisted $V$-module.  Set
\[
M'=\bigoplus_{n\in\frac{1}{T}\mathbb{Z}_{+}}M\left(n\right)^{*},
\]
the {\em restricted dual}, where $M(n)^{*}=\text{Hom}_{\mathbb{C}}(M(n),\mathbb{C})$.
For $v\in V$, define a vertex operator $Y_{M'}(v,z)$ on $M'$  via
\begin{eqnarray*}
\langle Y_{M'}(v,z)f,u\rangle= \langle f,Y_{M}(e^{zL(1)}(-z^{-2})^{L(0)}v,z^{-1})u\rangle,
\end{eqnarray*}
where $\langle f,w\rangle=f(w)$ is the natural paring $M'\times M\to\mathbb{C}.$
On the other hand, if $M=\oplus_{\lambda\in \C}M_{\lambda}$ is a $g$-twisted $V$-module,
we define $M'=\oplus_{\lambda\in \C}M_{\lambda}^{*}$ and define $Y_{M'}(v,z)$ for $v\in V$ in the same way.
\end{definition}

The same argument of  \cite{FHL} gives:

\begin{proposition} If $(M,Y_{M})$ is an admissible $g$-twisted $V$-module, then
 $(M',Y_{M'})$ carries the structure of
an admissible $g^{-1}$-twisted $V$-module. On the other hand,  if $(M,Y_{M})$ is a $g$-twisted $V$-module, then
 $(M',Y_{M'})$ carries the structure of
a $g^{-1}$-twisted $V$-module. Moreover, $M$ is irreducible if and only if $M'$ is irreducible.
\end{proposition}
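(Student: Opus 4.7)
The plan is to adapt the FHL contragredient construction \cite{FHL} step by step to the twisted setting. The key structural inputs are that $\omega\in V^0$ (so $L(0)$ and $L(1)$ preserve the decomposition $V=\oplus_{r=0}^{T-1}V^r$), and that $e^{zL(1)}(-z^{-2})^{L(0)}v$ involves only integer powers of $z$ on each homogeneous $v\in V$. These two facts together ensure that the adjoint formula does not disturb the $V^r$-grading of vectors while introducing only integer-power shifts in the formal variable.

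First I would verify that $Y_{M'}(v,z)$ has the correct mod-$\Z$ powers of $z$ for a $g^{-1}$-twisted vertex operator. For $v\in V^r$, the vectors appearing in $e^{zL(1)}(-z^{-2})^{L(0)}v$ still lie in $V^r$, and applying $Y_M(\cdot,z^{-1})$ to an element of $V^r$ produces powers of $z^{-1}$ in $-r/T-1+\Z$. Reading off, $Y_{M'}(v,z)=\sum_{n\in -r/T+\Z}v'_n z^{-n-1}$, which matches the required grading for a $g^{-1}$-twisted operator since $g^{-1}v=e^{-2\pi ir/T}v$. The vacuum axiom and lower truncation are immediate.

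The substantive step is the twisted Jacobi identity for $Y_{M'}$, and I would establish it in the equivalent form of Remark~\ref{equiv. of Jacobi Idenitty}: weak associativity together with the commutator formula. The commutator formula (\ref{twisted-commutator-formula}) transposes directly from $M$ to $M'$ via the adjoint formula, since the $L(-1)$-derivation property is preserved by the FHL construction. For weak associativity, I would pair both sides of the proposed identity with arbitrary $u\in M$, expand via the definition of $Y_{M'}$, and reduce to weak associativity on $M$ after the substitutions $z\mapsto z^{-1}$ familiar from \cite{FHL}. The twisting factor $((z_1-z_0)/z_2)^{-r/T}$ on the $M$-side transforms under these substitutions into the factor required for a $g^{-1}$-twisted structure on $M'$: the sign of the fractional exponent flips under $z\mapsto z^{-1}$, which matches the sign change in the eigenvalue under $g\mapsto g^{-1}$. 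The main obstacle here is the careful bookkeeping of fractional powers, branch conventions for $(\cdot)^{r/T}$ acting on formal delta series, and the $e^{zL(1)}(-z^{-2})^{L(0)}$ conjugations occurring on the right-hand side; this is the only place where the twisted setting differs substantively from \cite{FHL}.

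For the grading statements, the dual admissible grading $M'=\oplus M(n)^*$ satisfies the required degree-shifting property, which I would verify by transposing $u_m M(k)\subset M(\mathrm{wt}\,u-m-1+k)$ and tracking the integer weight shifts from $(-z^{-2})^{L(0)}$ and $e^{zL(1)}$; the rational parts $r/T$ cancel correctly, and the analogous statement for the $\C$-grading of an ordinary $g$-twisted module follows identically. Finally, irreducibility: if $0\ne N\subset M'$ is a graded submodule, then $N^{\perp}\subset M$ is a graded submodule (using that the adjoint map $v\mapsto e^{zL(1)}(-z^{-2})^{L(0)}v$ is a bijection on $V$, so that annihilating $Y_{M'}(v,z)N$ against $M$-vectors reduces to annihilating $Y_M(u,z^{-1})w$ for all $u\in V$); nondegeneracy of the pairing on each finite-dimensional homogeneous piece forces $N^{\perp}=0$ when $M$ is irreducible, and then $N$ agrees with $M'$ on each graded piece, so $N=M'$. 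The converse uses the canonical isomorphism $(M')'\cong M$.
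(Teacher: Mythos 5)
Your proposal is correct and takes essentially the same route as the paper, which proves this proposition simply by invoking ``the same argument of \cite{FHL}'': your outline is exactly that FHL contragredient argument adapted to the twisted setting, with the one genuinely new point handled correctly, namely that for $v\in V^{r}$ the modes of $Y_{M'}(v,z)$ lie in $-r/T+\Z$ (since $L(0)$ and $L(1)$ preserve $V^{r}$ and the adjoint formula shifts powers of $z$ only by integers), which is precisely the $g^{-1}$-twisting.
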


In particular, if $(M,Y_{M})$ is a $V$-module, then $(M',Y_{M'})$ is also a $V$-module. A $V$-module $M$ is said to be self-dual if $M$ and $M'$ are isomorphic. A vertex operator algebra $V$ is said to be {\em self-dual} if $V$ and $V'$ are isomorphic $V$-modules.

We shall need the following result from \cite{CM,M2}:

\begin{theorem}\label{CM}
Assume that $V$ is a regular and self-dual vertex operator
algebra of CFT type. Then for any solvable subgroup $G$ of ${\rm Aut}(V)$, $V^{G}$ is a regular  and
self-dual vertex operator algebra of CFT type.
\end{theorem}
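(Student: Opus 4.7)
The plan is to reduce the general statement to the case where $G$ is cyclic of prime order and then invoke the known rationality/regularity results for fixed-point subalgebras under a single finite-order automorphism. Since $G$ is finite and solvable, one can choose a subnormal series $\{1\}=G_{0}\triangleleft G_{1}\triangleleft\cdots\triangleleft G_{n}=G$ in which every successive quotient $G_{i+1}/G_{i}$ is cyclic of prime order. At each stage, the inclusion $V^{G_{i+1}}\subset V^{G_{i}}$ is of the form $(V^{G_{i}})^{G_{i+1}/G_{i}}\subset V^{G_{i}}$, with $G_{i+1}/G_{i}$ acting naturally as vertex operator algebra automorphisms of $V^{G_{i}}$. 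So it suffices to prove that the three properties (regular, self-dual, CFT type) are preserved when passing to the fixed-point subalgebra under a single automorphism of prime order, and then induct on $n$.

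For the prime-order cyclic case $G=\langle g\rangle$, the preservation of regularity is the main input and is precisely Miyamoto's theorem from \cite{M2}: one shows that every weak $V^{G}$-module arises from (possibly $g^{i}$-twisted) $V$-modules, and the resulting category is semisimple. The CFT-type condition for $V^{G}$ is immediate, since $g(\omega)=\omega$ and $g(\mathbf{1})=\mathbf{1}$ force $(V^{G})_{0}=V_{0}^{G}=\mathbb{C}\mathbf{1}$, while the nonnegative-integer grading is inherited from $V$. For self-duality, one uses that a regular CFT-type vertex operator algebra with $V_{0}=\mathbb{C}\mathbf{1}$ carries an essentially unique non-degenerate invariant bilinear form (by Li's uniqueness theorem), and that every automorphism of $V$ is automatically an isometry of this form. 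Averaging this form over $G$ (or simply restricting it, since $V^{G}$ is a sum of isotypic components orthogonal under $g$-action) produces a non-degenerate invariant bilinear form on $V^{G}$, which yields $V^{G}\cong (V^{G})'$ as $V^{G}$-modules.

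The main obstacle is clearly the regularity step in the prime-order cyclic case: passing from the rationality of $V$ and its $g^{i}$-twisted sectors to the full regularity of $V^{G}$ (i.e., complete reducibility of \emph{every} weak module) requires a careful analysis of how weak $V^{G}$-modules decompose under induction to $g^{i}$-twisted $V$-modules, together with finiteness inputs from $C_{2}$-cofiniteness of $V^{G}$. This is the substantive content of \cite{M2}, refined in \cite{CM}, which we invoke as a black box rather than reprove; with that result in hand, the solvable case follows by the induction outlined above and the orbifold-of-an-orbifold identification $V^{G_{i+1}}=(V^{G_{i}})^{G_{i+1}/G_{i}}$.
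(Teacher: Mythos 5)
Your proposal is correct and takes essentially the same route as the paper, which gives no independent proof of this theorem but imports it directly from \cite{CM} and \cite{M2}; your reduction of the solvable case to the prime-order cyclic case via a subnormal series, with CFT type and self-duality verified directly and the hard regularity step black-boxed to Miyamoto and Carnahan--Miyamoto, is exactly the structure of the argument in those cited references. Nothing more is required.
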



Let $g_1,g_2,g_3$ be mutually commuting
automorphisms of $V$ of periods $T_{1}$, $T_2$, $T_3$, respectively.
 In this case, $V$ decomposes into
the direct sum of common eigenspaces for $g_1$ and $g_2$:
\begin{align*}
V=\bigoplus_{0\le j_{1}<T_1,\ 0\le j_{2}<T_2}V^{\left(j_{1},j_{2}\right)},
\end{align*}
where for $j_1,j_2\in \Z$, \emph{
\begin{align}\label{Vij}
V^{\left(j_{1},j_{2}\right)}=\left\{ v\in V\ |\  g_{s}v=e^{2\pi ij_{s}/T_{s}},s=1,2\right\} .
\end{align}}
For any complex number $\alpha$, we define
$$(-1)^{\alpha}=e^{\alpha \pi i}.$$
Now, we define intertwining operators among weak $g_{s}$-twisted modules
$(M_{s},Y_{M_{s}})$ for $s=1,2,3$.

\begin{definition} \label{Intertwining operator for twisted modules}
An {\em intertwining operator of type $\binom{M_3}{M_{1}\ M_{2}}$}
associated with the given data is a linear map
\begin{align*}
\mathcal{Y}(\cdot,z): \  \  &  M_{1}\to\left(\text{Hom}(M_{2},M_{3})\right)\{ z\}\nonumber\\
&w\mapsto\mathcal{Y}\left(w,z\right)=\sum_{n\in\mathbb{C}}w_{n}z^{-n-1}
\end{align*}
such that for any $w^{1}\in M_{1},\ w^{2}\in M_{2}$ and for any fixed $c\in\mathbb{C}$,
\[
w_{c+n}^{1}w^{2}=0\   \   \mbox{  for $n\in\mathbb{Q}$ sufficiently large},
\]
\begin{align}
 & z_{0}^{-1}\left(\frac{z_{1}-z_{2}}{z_{0}}\right)^{j_{1}/T_{1}}\delta\left(\frac{z_{1}-z_{2}}{z_{0}}\right)
 Y_{M_{3}}(u,z_{1})\mathcal{Y}(w,z_{2})\nonumber \\
 &\   \  \   \  \
  -z_{0}^{-1}\left(\frac{z_{2}-z_{1}}{-z_{0}}\right)^{j_{1}/T_{1}}\delta\left(\frac{z_{2}-z_{1}}{-z_{0}}\right)
  \mathcal{Y}(w,z_{2})Y_{M_{2}}(u,z_{1})\nonumber \\
 =\  \  &z_{2}^{-1}\left(\frac{z_{1}-z_{0}}{z_{2}}\right)^{-j_{2}/T_{2}}\delta\left(\frac{z_{1}-z_{0}}{z_{2}}\right)
 \mathcal{Y}\left(Y_{M_{1}}(u,z_{0})w,z_{2}\right)\label{Twisted Intertwining}
\end{align}
 on $M_{2}$ for $u\in V^{(j_{1},j_{2})}$ with $j_1,j_2\in \Z$ and
$w\in M_{1}$,  and
\[
\frac{d}{dz}\mathcal{Y}(w,z)=\mathcal{Y}(L(-1)w,z).
\]
All intertwining operators of type $\binom{M_3}{M_{1}\ M_{2}}$
form a vector space, which we denote by \emph{$I_{V}\binom{M_3}{M_{1}\ M_{2}}$.}
Set $N_{M_{1} M_{2}}^{M_{3}}=\dim I_{V}\binom{M_3}{M_{1}\ M_{2}}$,
which is called the \textit{fusion rule}.
\end{definition}

\begin{remark}\label{r2.14}
As it was proved in \cite{Xu},  if there are weak $g_{s}$-twisted modules
$(M_{s},Y_{M_{s}})$ for $s=1,2,3$ such that $N_{M_{1}\ M_{2}}^{M_{3}}>0$,
then $g_{3}=g_{1}g_{2}$. In view of this, we shall always assume that $g_{3}=g_{1}g_{2}$.
\end{remark}

By the same arguments in Chapter 7 of \cite{DL} for formulas
(7.24) and (7.11) we have:

\begin{proposition}\label{p2.14}
The twisted Jacobi identity (\ref{Twisted Intertwining})
is equivalent to the {\em generalized commutativity:} For $u\in V^{(j_{1},j_{2})}$
and $w\in M_{1}$, there is a positive integer $n$ such that
\[
(z_{1}-z_{2})^{n+\frac{j_{1}}{T_{1}}}Y_{M_{3}}(u,z_{1}){\cal Y}(w,z_{2})
=(-z_{2}+z_{1})^{n+\frac{j_{1}}{T_{1}}}{\cal Y}(w,z_{2})Y_{M_{2}}(u,z_{1})
\]
for $u\in V^{(j_{1},j_{2})},\ w\in M_{1}$ and {\em generalized associativity:}
For $u\in V^{(j_{1},j_{2})},$ $w\in M_{1}$ and $w_{2}\in M_{2}$,
there is a positive integer $n$ depending on $u$ and $w_{2}$ only
such that
\[
(z_{0}+z_{2})^{n+\frac{j_{2}}{T_{2}}}Y_{M_{3}}(u,z_1){\cal Y}(w,z_{2})w_{2}
=(z_{2}+z_{0})^{n+\frac{j_{2}}{T_{2}}}{\cal Y}(Y_{M_{1}}(u,z_{0})w,z_{2})w_{2}.
\]
\end{proposition}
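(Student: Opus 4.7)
The plan is to mirror the proofs of formulas (7.24) and (7.11) in Chapter 7 of \cite{DL}, inserting the fractional exponents $j_1/T_1$ and $j_2/T_2$ where appropriate. Concretely, generalized commutativity and generalized associativity are extracted from the twisted Jacobi identity (\ref{Twisted Intertwining}) by the two standard residue operations, and conversely the full Jacobi identity is recovered from their combination via the standard formal delta-function manipulations.

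For Jacobi $\Rightarrow$ generalized commutativity, I would multiply (\ref{Twisted Intertwining}) by $z_0^{n+j_1/T_1}$ and take $\text{Res}_{z_0}$ for a large positive integer $n$. The lower-truncation condition on $Y_{M_1}(u,z_0)w$ (namely $u_{m+j_1/T_1}w=0$ for $m\in\Z$ sufficiently large) annihilates the right-hand side once $n$ is big enough, while the two $\delta$-terms on the left collapse to the stated identity
\[
(z_1-z_2)^{n+j_1/T_1}Y_{M_3}(u,z_1)\mathcal{Y}(w,z_2) = (-z_2+z_1)^{n+j_1/T_1}\mathcal{Y}(w,z_2) Y_{M_2}(u,z_1).
\]
For Jacobi $\Rightarrow$ generalized associativity, I would apply both sides of (\ref{Twisted Intertwining}) to $w_2\in M_2$, multiply by $(z_2+z_0)^{n+j_2/T_2}$, and take $\text{Res}_{z_1}$. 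The delta-function identity
\[
z_2^{-1}\left(\frac{z_1-z_0}{z_2}\right)^{-j_2/T_2}\delta\left(\frac{z_1-z_0}{z_2}\right) = z_1^{-1}\left(\frac{z_2+z_0}{z_1}\right)^{j_2/T_2}\delta\left(\frac{z_1-z_0}{z_2}\right)
\]
recorded just after the definition of a weak $g$-twisted module sends the right-hand side residue to $(z_2+z_0)^{n+j_2/T_2}\mathcal{Y}(Y_{M_1}(u,z_0)w,z_2)w_2$, while for $n$ sufficiently large the second term on the left vanishes because $Y_{M_2}(u,z_1)w_2$ has only finitely many negative $\tfrac{1}{T_2}\Z$-powers of $z_1$.

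Conversely, I would recover Jacobi from the two relations by using the standard expansion that writes the sum of the two delta-function terms on the left of (\ref{Twisted Intertwining}) as the coefficient of $z_0^{-1}$ in an expression supported near $z_1=z_0+z_2$: generalized commutativity identifies the iterates in the two regions $|z_1|>|z_2|$ and $|z_2|>|z_1|$ with a common formal element $(z_1-z_2)^{n+j_1/T_1}\mathcal{Y}$-expression, and generalized associativity identifies that common element with the $z_1$-residue of the right-hand side of Jacobi. The main obstacle will be tracking the branch conventions for the fractional powers $(z_1-z_2)^{j_1/T_1}$, $(-z_2+z_1)^{j_1/T_1}$, and their $j_2/T_2$-counterparts consistently, together with verifying that the overall monodromy on both sides matches by virtue of $g_3=g_1g_2$ from Remark \ref{r2.14}; once this bookkeeping is set up, the argument is a line-by-line transcription of the formal-calculus derivation in \cite{DL}.
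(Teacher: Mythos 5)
Your proposal is correct and takes essentially the same route as the paper, which offers no independent argument for Proposition \ref{p2.14} but simply invokes the arguments for formulas (7.24) and (7.11) in Chapter 7 of \cite{DL} --- precisely the residue extractions and delta-function reconstruction (with the branch convention $(-1)^{\alpha}=e^{\alpha\pi i}$ and $g_3=g_1g_2$ tracking the fractional powers) that you describe. One bookkeeping correction: in the associativity direction the factor to insert before taking $\mathrm{Res}_{z_1}$ is $z_1^{n+j_2/T_2}$ rather than $(z_2+z_0)^{n+j_2/T_2}$ --- the quoted delta identity converts it into $(z_2+z_0)^{n+j_2/T_2}$ on the support of $\delta\bigl(\frac{z_1-z_0}{z_2}\bigr)$, and a positive power of $z_1$ is what is actually needed to annihilate the term $\mathcal{Y}(w,z_2)Y_{M_2}(u,z_1)w_2$ via the lower truncation in $z_1$, which is also why $n$ depends only on $u$ and $w_2$.
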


Let $V_1$ and $V_2$ be vertex operator algebras and let $\sigma_1, \sigma_2$ be finite order automorphisms of $V_1, V_2$,
respectively. Let $W_1$ be a $V_1$-module,
$W_2,W_3$ $\sigma_1$-twisted $V_1$-modules, and let $N_1$ be a $V_2$-module,
$N_2,N_3$ $\sigma_2$-twisted $V_2$-modules. Note that $W_1\otimes N_1$ is a $V_1\otimes V_2$-module, while
$W_2\otimes N_2$ and $W_3\otimes N_3$ are $\sigma$-twisted $V_1\otimes V_2$-modules with
$\sigma=\sigma_1\otimes \sigma_2$.
For any intertwining operator $\mathcal{Y}_{1}(\cdot,z)$
of type $\binom{W_3}{W_1\ W_2}$ and for any intertwining operator $\mathcal{Y}_{2}(\cdot,z)$
of type $\binom{N_3}{N_1\ N_2}$, define a linear map
$$(\mathcal{Y}_1\otimes \mathcal{Y}_2)(\cdot,z):\  W_1\otimes N_1\rightarrow
 \left(\text{Hom} (W_2\otimes N_2,W_{3}\otimes N_3)\right)\{z\}$$
by
\begin{align}
(\mathcal{Y}_1\otimes \mathcal{Y}_2)(w_1\otimes n_1,z)(w_2\otimes n_2)
=\mathcal{Y}_{1}(w_1,z)w_2\otimes \mathcal{Y}_{2}(n_1,z)n_2
\end{align}
for $w_1\in W_1,\ w_2\in W_2,\ n_1\in N_1,n_2\in N_2$.
As in \cite{ADL}, by using the generalized commutativity and associativity, it is straightforward to show that
$(\mathcal{Y}_1\otimes \mathcal{Y}_2)(\cdot,z)$ is an intertwining operator of type
$\binom{W_3\otimes N_3}{W_1\otimes N_1\  W_2\otimes N_2}$. Then we have a linear map
\begin{align}
\psi: \  \  I_{V_1}\binom{W_3}{W_1\ W_2}\otimes  I_{V_2}\binom{N_3}{N_1\ N_2} &\rightarrow
I_{V_1\otimes V_2}\binom{W_3\otimes N_3}{W_1\otimes N_1\  W_2\otimes N_2}\nonumber\\
(\mathcal{Y}_{1},\mathcal{Y}_2)&\mapsto \mathcal{Y}_{1}\otimes \mathcal{Y}_2.
\end{align}

The same arguments of \cite{ADL} in proving Theorem 2.10 (with minor necessary modifications) give:

\begin{theorem}\label{ADL-twisted-version}
Let $V_1,V_2,\ \sigma_1,\sigma_2, \ W_1,W_2,W_3, N_1,N_2, N_3$ be given as above.
Then the linear map $\psi$ is one-to-one. Furthermore, if either
$$\dim I_{V^1}\binom{W_3}{W_1\ W_2}<\infty, \  \mbox{ or }  \dim I_{V^2}\binom{N_3}{N_1\ N_2}<\infty,$$
then $\psi$ is a linear isomorphism.
\end{theorem}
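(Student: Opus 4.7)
The plan is to mimic the proof of Theorem 2.10 of \cite{ADL} in the untwisted case, with minor modifications to accommodate the fractional exponents of $z$ that arise in the twisted Jacobi identity. The key structural observation is that when $u = u_1\otimes u_2 \in (V_1\otimes V_2)^{(j_1,j_2)}$ (as an eigenvector of $\sigma_1\otimes\sigma_2$), one has $u_1\in V_1^{j_1'}$ and $u_2\in V_2^{j_2'}$ for some $j_1', j_2'$, so the fractional exponents in the generalized commutativity/associativity of Proposition \ref{p2.14} for $\mathcal{Y}_1\otimes\mathcal{Y}_2$ factor cleanly into the corresponding exponents for $\mathcal{Y}_1$ and $\mathcal{Y}_2$ separately. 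In particular, taking $u_2=\mathbf{1}$ (or $u_1=\mathbf{1}$) kills the contribution from one factor and isolates the other.

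For injectivity, I would take a putative relation $\sum_{i=1}^r \mathcal{Y}_1^i\otimes\mathcal{Y}_2^i \in \ker\psi$ with $\{\mathcal{Y}_1^i\}$ linearly independent (after eliminating redundancy), yielding
\[
\sum_{i=1}^r \mathcal{Y}_1^i(w_1,z)w_2 \otimes \mathcal{Y}_2^i(n_1,z)n_2 = 0
\]
for all $w_1,w_2,n_1,n_2$. Fixing $n_1,n_2$ and a vector $n_3'\in N_3'$ and applying $1\otimes\langle n_3',\cdot\rangle$ to each coefficient of $z^c$ gives a scalar relation $\sum_i \alpha_i^c(n_1,n_2,n_3')\,\mathcal{Y}_1^i = 0$ among the $\mathcal{Y}_1^i$, hence all $\alpha_i^c$ vanish. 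Varying $n_1,n_2,n_3'$, each matrix coefficient of $\mathcal{Y}_2^i$ vanishes, so $\mathcal{Y}_2^i=0$ for all $i$.

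For surjectivity under $d:=\dim I_{V_1}\binom{W_3}{W_1\ W_2}<\infty$, fix a basis $\mathcal{Y}_1^{(1)},\dots,\mathcal{Y}_1^{(d)}$. Given $\mathcal{Y}\in I_{V_1\otimes V_2}\binom{W_3\otimes N_3}{W_1\otimes N_1\ W_2\otimes N_2}$, for each $n_1,n_2,n_3'$ the formal series
\[
\mathcal{Y}_{n_1,n_2,n_3'}(w_1,z)w_2 := \langle n_3',\mathcal{Y}(w_1\otimes n_1,z)(w_2\otimes n_2)\rangle \in W_3\{z\}
\]
defines an intertwining operator in $I_{V_1}\binom{W_3}{W_1\ W_2}$, because the generalized commutativity and associativity of Proposition \ref{p2.14} for $\mathcal{Y}$ applied to $u_1\otimes\mathbf{1}\in V_1\otimes V_2$ reduces to the corresponding relations for $\mathcal{Y}_{n_1,n_2,n_3'}$ with the correct fractional exponent $j_1/T_1$. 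Hence one may write uniquely
\[
\mathcal{Y}_{n_1,n_2,n_3'} = \sum_{k=1}^d \lambda_k(n_1,n_2,n_3')\,\mathcal{Y}_1^{(k)}
\]
for scalars $\lambda_k(n_1,n_2,n_3')$. One then defines $\mathcal{Y}_2^{(k)}(n_1,z)n_2\in N_3\{z\}$ by requiring $\langle n_3',\mathcal{Y}_2^{(k)}(n_1,z)n_2\rangle = \lambda_k(n_1,n_2,n_3'; z)$ for all $n_3'\in N_3'$, where well-definedness and linearity in $(n_1,n_2,n_3')$ follow from uniqueness of basis expansion plus the corresponding properties of $\mathcal{Y}$. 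Finally, verify $\mathcal{Y}=\sum_k \psi(\mathcal{Y}_1^{(k)}\otimes\mathcal{Y}_2^{(k)})$ by construction.

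The main obstacle is showing that $\mathcal{Y}_2^{(k)}$ is a bona fide twisted intertwining operator of type $\binom{N_3}{N_1\ N_2}$ for $V_2$, i.e., that it satisfies the generalized commutativity/associativity for $V_2$ and the $L(-1)$-derivative property. For this, one applies Proposition \ref{p2.14} to $\mathcal{Y}$ with elements of the form $\mathbf{1}\otimes u_2$, obtains an identity of the form
\[
\sum_k \bigl(\text{$V_2$-Jacobi expression for } \mathcal{Y}_2^{(k)}\bigr)\,\mathcal{Y}_1^{(k)} = 0,
\]
and invokes the linear independence of $\mathcal{Y}_1^{(1)},\dots,\mathcal{Y}_1^{(d)}$ to conclude that each summand vanishes, exactly as in \cite{ADL}. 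The fractional exponents $j_2/T_2$ propagate through these manipulations without interfering with the independence argument because they live in the $z$-formal-series variable while linear independence is tested against elements of $W_1\otimes W_2\otimes W_3'$.
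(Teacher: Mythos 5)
Your proposal is correct and follows essentially the same route as the paper: the paper gives no independent argument here, stating only that ``the same arguments of \cite{ADL} in proving Theorem 2.10 (with minor necessary modifications)'' yield the result, and your reconstruction---injectivity by linear independence and matrix-coefficient evaluation, surjectivity by expanding $\mathcal{Y}$ against a basis of the finite-dimensional factor and verifying the twisted axioms for the coefficient operators via Proposition \ref{p2.14} applied to $u_1\otimes\mathbf{1}$ and $\mathbf{1}\otimes u_2$---is exactly that argument, with the twist-specific modification (the factorization of the exponents $j_1/T_1$, $j_2/T_2$ on eigenvectors $u_1\otimes u_2$) correctly identified. One cosmetic point: your $\lambda_k(n_1,n_2,n_3')$ are not scalars but carry a fixed power of $z$ determined by the conformal weights of homogeneous vectors (which is how the coefficient extraction disentangles the two tensor factors, as in \cite{ADL}), and your later notation $\lambda_k(n_1,n_2,n_3';z)$ already reflects this.
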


Set ${\mathbb{D}}=\C^{2}\setminus\{(z,z),(z,0),(0,z)\ |\  z\in\C\}.$ Then
we have:

 \begin{proposition}\label{p2.15}
 Let $V$ be rational and $C_{2}$-cofinite,
and let $G\subset {\rm Aut}(V)$ be a finite abelian group. Then in the presence of other conditions in Definition \ref{Intertwining operator for twisted modules}, the Jacobi identity
(\ref{Twisted Intertwining}) is equivalent to the following properties for $g_{1},g_{2}\in G$ and $v\in V^{(j_{1},j_{2})},w\in M_{1},w_{2}\in M_{2},w_{3}'\in M_{3}'$:

(a) The formal series
\[
\langle w_{3}',Y_{M_{3}}(v,z_{1}){\cal Y}(w,z_{2})w_{2}\rangle(z_{1}-z_{2})^{j_{1}/T_{1}}z_{1}^{j_{2}/T_{2}}
\]
converges to a unique multi-valued analytic function $f(z_{1},z_{2})=\frac{\sum_{i}h_{i}(z_{1},z_{2})z_{2}^{r_{i}}}{z_{1}^{r}z_{2}^{s}(z_{1}-z_{2})^{t}}$
for some $h_{i}(z_{1},z_{2})\in\C[z_{1},z_{2}],$ $r_{i}\in\Q$ and
$r,s,t\in\Z$ in the domain $|z_{1}|>|z_{2}|>0.$

(b) The formal series
\[
\langle w_{3}',{\cal Y}(w,z_{2})Y_{M_{2}}(v,z_{1})w_{2}\rangle(-z_{2}+z_{1})^{j_{1}/T_{1}}z_{1}^{j_{2}/T_{2}}
\]
converges to $f(z_{1},z_{2})$ in the domain $|z_{2}|>|z_{1}|>0.$

(c) The formal series
\[
\langle w_{3}',{\cal Y}\left(Y_{M_{1}}\left(v,z_{1}-z_{2}\right)w,z_{2}\right)w_{2}\rangle(z_{1}-z_{2})^{j_{1}/T_{1}}z_{1}^{j_{2}/T_{2}}
\]
converges to $f(z_{1},z_{2})$ in the domain $|z_{2}|>|z_{1}-z_{2}|>0.$

\end{proposition}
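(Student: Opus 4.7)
The plan is to prove both directions, using Proposition \ref{p2.14} for the purely formal part and Huang's convergence theorem for the orbifold $V^G$ to supply analytic convergence. The forward implication reduces to $V^G$-intertwining operators (legitimate because Theorem \ref{CM} gives that $V^G$ is regular, $C_2$-cofinite, and of CFT type when $G$ is finite abelian, hence solvable), and the backward implication is a standard extraction of the formal Jacobi identity from three overlapping expansions of a common multi-valued analytic function.

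For the forward direction, assume the twisted Jacobi identity \eqref{Twisted Intertwining}. Each $M_s$ for $s=1,2,3$ restricts to a $V^G$-module which is a finite direct sum of irreducible $V^G$-modules. Decomposing $V$ into joint $G$-eigenspaces and $M_1,M_2,M_3$ into $V^G$-isotypic components, the twisted $V$-intertwining operator $\mathcal{Y}$ is equivalent to a finite family of $V^G$-intertwining operators among untwisted $V^G$-modules. Huang's convergence theorem then guarantees that $\langle w_3',Y_{M_3}(v,z_1)\mathcal{Y}(w,z_2)w_2\rangle$, treated component-by-component and multiplied by the twist prefactor $(z_1-z_2)^{j_1/T_1}z_1^{j_2/T_2}$ that pins down a single branch, converges absolutely in $|z_1|>|z_2|>0$ to a function of the asserted form in (a). The rationality of the exponents $r_i$ follows from \cite{DLM3}, which states that the conformal weights of irreducible $g$-twisted $V$-modules are rational; this bounds the fractional-power content of $f$ to finitely many terms. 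Conditions (b) and (c) then follow by analytically continuing $f(z_1,z_2)$ to $|z_2|>|z_1|>0$ and $|z_2|>|z_1-z_2|>0$ respectively, and expanding in the prescribed branches $(-z_2+z_1)^{j_1/T_1}$ and $(z_1-z_2)^{j_1/T_1}$. Proposition \ref{p2.14} ensures that the resulting expansions match the formal series appearing in (b) and (c).

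For the backward direction, assume (a), (b), (c) hold with common analytic function $f(z_1,z_2)$. Following Chapter 7 of \cite{DL}, I would pair the three terms in \eqref{Twisted Intertwining} against an arbitrary $w_3'\in M_3'$ and $w_2\in M_2$, apply $\text{Res}_{z_0}$ to the corresponding $\delta$-function expansions, and recognize each resulting series as the expansion of $f(z_1,z_2)$ in the appropriate domain with the appropriate branch convention. The classical three-term $\delta$-function identity then converts the equality of these three expansions into the twisted Jacobi identity. The main obstacle is the convergence step in the forward direction: Huang's theorem is stated for untwisted intertwining operators, and transporting it to the twisted setting requires careful bookkeeping of the eigenspace decomposition and of the branch conventions hidden in the twist prefactor. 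A secondary technicality is pinning down the explicit denominator/numerator shape of $f$ in (a), which combines the pole structure from the commutativity and associativity of Proposition \ref{p2.14} with the rationality of twisted conformal weights from \cite{DLM3}.
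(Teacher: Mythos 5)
There is a genuine gap in your forward direction: Huang's convergence theorem cannot deliver the conclusion of (a). That theorem, applied to products of two $V^{G}$-intertwining operators among untwisted $V^{G}$-modules, yields convergence to multivalued analytic solutions of regular-singular differential equations, and such four-point functions are in general \emph{not} rational (hypergeometric-type functions already occur for minimal models). But the entire content of (a) is the very strong rationality assertion $f(z_{1},z_{2})=\frac{\sum_{i}h_{i}(z_{1},z_{2})z_{2}^{r_{i}}}{z_{1}^{r}z_{2}^{s}(z_{1}-z_{2})^{t}}$ with polynomial $h_{i}$. You flag ``pinning down the explicit denominator/numerator shape of $f$'' as a secondary technicality, but it is the main point, and your primary tool is structurally incapable of producing it. The reason the shape holds here is that $v$ is an element of the \emph{algebra} $V$, not of a general module: by Proposition \ref{p2.14}, $Y_{M_{3}}(v,z_{1})$ and $Y_{M_{2}}(v,z_{1})$ satisfy generalized commutativity and associativity with ${\cal Y}(w,z_{2})$ with a pole of \emph{finite} order along $z_{1}=z_{2}$, and this finite-order weak commutativity/associativity is exactly what forces, by the purely formal arguments of Propositions 7.6, 7.8 and 7.9 of \cite{DL}, that the correlation function (after the branch prefactor $(z_{1}-z_{2})^{j_{1}/T_{1}}z_{1}^{j_{2}/T_{2}}$) is the expansion of a function of the stated quasi-rational form. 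Had the mechanism been your reduction to products of two genuine intertwining operators plus Huang's theorem, statement (a) would simply be false-in-general and unprovable along that route.

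What the paper actually does is entirely formal and much lighter: Theorem \ref{CM} gives that $V^{G}$ is regular (here is where rationality, $C_{2}$-cofiniteness and abelianness of $G$ enter), so by \cite{DLM3} each $M_{i}$ is a \emph{finite} direct sum of irreducible $V^{G}$-modules with \emph{rational} weights; reducing to $w,w_{2},w_{3}'$ in irreducible $V^{G}$-components, the $z_{2}$-exponents of $\langle w_{3}',Y_{M_{3}}(v,z_{1}){\cal Y}(w,z_{2})w_{2}\rangle$ lie in a single coset $r+\Z$ with $r\in\Q$ (this, not the conformal weights of twisted $V$-modules per se, is what bounds the fractional powers $z_{2}^{r_{i}}$ to finitely many rational values), and then everything --- both directions of the equivalence --- follows from Proposition \ref{p2.14} together with the proofs of Propositions 7.6, 7.8 and 7.9 of \cite{DL}; the ``convergence'' in (a)--(c) is just that of the three geometric-series expansions of one quasi-rational function, no analytic input required. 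Your backward direction (pairing the three terms of \eqref{Twisted Intertwining}, recognizing the three expansions of the common $f$, and assembling the Jacobi identity via the $\delta$-function identity) is sound and is essentially the \cite{DL} argument the paper cites; it is only the forward direction that needs to be replaced by the formal rationality argument just described.
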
 \begin{proof} Since $V^{G}$ is rational by Theorem
\ref{CM},  from \cite{DLM3} each $M_{i}$ is a direct sum of finitely many irreducible
$V^{G}$-modules whose weights are rational. Without
loss we can assume that $w,w_{2},w_{3}'$ are vectors in some irreducible
$V^{G}$-modules. Then there exists a rational number $r$ such that
the powers of $z_{2}$ in the formal power series $\langle w_{3}',Y_{M_{3}}(v,z_{1}){\cal Y}(w,z_{2})w_{2}\rangle$
lie in $r+\Z.$ The same is true for the corresponding terms in (b)
and (c). Now results follow from Proposition \ref{p2.14} and the
proofs of Propositions 7.6, 7.8 and 7.9 of \cite{DL}. \end{proof}

Now we recall a notion of tensor product (see \cite{HL1-2}, \cite{Li2}, \cite{DLM1}).

\begin{definition}\label{d2.13}
Let $g_1,g_2$ be commuting finite order automorphisms of a vertex operator algebra $V$ and
let $M_i$ be a $g_i$-twisted $V$-module for $i=1,2$. A {\em tensor product} for the ordered
pair $\left(M_{1},M_{2}\right)$ is a pair $(M,F(\cdot,z))$
where $M$ is a weak $g_1g_{2}$-twisted $V$-module and $F\left(\cdot,z\right)$ is an intertwining
operator  of type $\binom{M}{M_{1}\ M_{2}}$
such that the following universal property holds: For any weak $g_1g_{2}$-twisted
$V$-module $W$ and for any intertwining operator $I(\cdot,z)$
of type $\binom{W}{M_{1}\ M_{2}},$ there exists a unique
weak twisted $V$-module homomorphism $\psi$ from $M$ to $W$ such that $I(\cdot,z)=\psi\circ F(\cdot,z).$
\end{definition}

We shall use $M_{1}\boxtimes^{V}M_{2}$ to denote a generic tensor product module
of $M_{1}$ and $M_{2}$, provided that the existence is verified.

\begin{remark}\label{tautological}
We here recall Huang-Lepowsky's tautological construction of a tensor product (see \cite{HL1-2}).
Suppose that $V$ is a vertex operator algebra with a finite order automorphism $\tau$ such that $V$ is $\tau$-rational, namely,
every $\tau$-twisted $V$-module is completely reducible.  Let $W$ be a $V$-module and $T$ a $\tau$-twisted $V$-module
such that $\dim I_{V}\binom{P}{W\ T}<\infty$  for every irreducible $\tau$-twisted $V$-module $P$.
Let $\{P_{\alpha}\ |\ \alpha\in \mathcal{A}\}$ be a complete set of equivalence class representatives of irreducible $\tau$-twisted $V$-modules. Set
\begin{align}
M=\oplus_{\alpha\in \mathcal{A}} I\binom{P_{\alpha}}{W\ T}^{*}\otimes P_{\alpha}
=\oplus_{\alpha\in \mathcal{A}}\text{Hom}\left(I\binom{P_{\alpha}}{W\ T},P_{\alpha}\right),
\end{align}
a $\tau$-twisted $V$-module. For $\alpha\in \mathcal{A}$, define $\mathcal{Y}^{\alpha}(w,z)t\in \text{Hom}\left(I\binom{P_{\alpha}}{W\ T},P_{\alpha}\right)\{z\}$ by
$$(\mathcal{Y}^{\alpha}(w,z)t)(I)=I(w,z)t\   \   \   \mbox{ for }I\in I\binom{P_{\alpha}}{W\ T}.$$
Set $\mathcal{Y}(\cdot,z)=\sum_{\alpha\in \mathcal{A}}\mathcal{Y}^{\alpha}(\cdot,z)$.
Then $(M,\mathcal{Y})$ is a tensor product of $W$ and $T$.
\end{remark}

The following is a simple property of fusion rules (numbers) (see \cite{FHL}, \cite{HL1-2}, \cite{DLM1}:

\begin{lemma}
Let $g_1,g_2$ be commuting finite order automorphisms of a vertex operator algebra $V$ and
let $M_i$ be a $g_i$-twisted $V$-module for $i=1,2,3$ with $g_3=g_1g_2$.
Then $N_{M_{1}\ M_{2}}^{M_{3}}=N_{M_{2}\ M_{1}}^{M_{3}}.$
\end{lemma}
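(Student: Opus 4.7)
The plan is to extend the Frenkel--Huang--Lepowsky skew-symmetry (Proposition 5.4.7 of \cite{FHL}) to the twisted setting. I would construct an explicit linear isomorphism
$$\Omega : I_V\binom{M_3}{M_1\ M_2} \to I_V\binom{M_3}{M_2\ M_1}$$
via the formal substitution $z \mapsto e^{\pi i}z$ combined with conjugation by $e^{zL(-1)}$. Once $\Omega$ and its two-sided inverse are exhibited, the equality of fusion numbers is immediate.

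For $\mathcal{Y} \in I_V\binom{M_3}{M_1\ M_2}$, define
$$\Omega(\mathcal{Y})(w_2, z)w_1 = e^{zL(-1)} \mathcal{Y}(w_1, e^{\pi i}z) w_2,$$
where $(e^{\pi i} z)^{\alpha}$ is interpreted as $e^{\alpha \pi i} z^{\alpha}$ for each fractional power $z^\alpha$ appearing in $\mathcal{Y}(w_1, z) w_2$. The verification has two parts. First, the $L(-1)$-derivative condition transfers by means of the standard identity $e^{zL(-1)} \mathcal{Y}(w_1, y) e^{-zL(-1)} = \mathcal{Y}(w_1, y+z)$ together with the chain rule. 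Second, one checks the twisted Jacobi identity of Definition \ref{Intertwining operator for twisted modules} for $\Omega(\mathcal{Y})$; it is most convenient to use the equivalent generalized commutativity and associativity furnished by Proposition \ref{p2.14}, and derive them for $\Omega(\mathcal{Y})$ from the corresponding identities for $\mathcal{Y}$ via the shift coming from $e^{zL(-1)}$ followed by the branch substitution. An inverse $\Omega^{-1}$ is produced by the analogous formula with $e^{\pi i}$ replaced by $e^{-\pi i}$, and $\Omega^{-1} \circ \Omega = \mathrm{id}$ is then immediate.

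The principal technical obstacle is matching branch conventions. In the Jacobi identity for $\mathcal{Y} \in I_V\binom{M_3}{M_1\ M_2}$ the commutativity exponent is $j_1/T_1$ (reflecting the $g_1$-twist of $M_1$), while for $\Omega(\mathcal{Y}) \in I_V\binom{M_3}{M_2\ M_1}$ the commutativity exponent must be $j_2/T_2$ (reflecting the $g_2$-twist of $M_2$); the associativity exponents are similarly interchanged. Under the substitution, the factors $(z_1-z_2)^{n+j_1/T_1}$ acquire a phase $e^{(n+j_1/T_1)\pi i}$, and one must verify that after absorbing this phase and reorganizing, the surviving exponent is $n' + j_2/T_2$ for an appropriate $n'$. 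The required compatibility is forced by the hypothesis $g_3 = g_1 g_2$ together with $u \in V^{(j_1,j_2)}$, but the phase bookkeeping is the one place where care is genuinely needed; every other step is a direct transcription of the untwisted argument of \cite{FHL}.
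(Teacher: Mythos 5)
Your overall route is the right one and is, in fact, the proof the paper is implicitly invoking: the lemma is stated in the paper without proof, with citations to \cite{FHL}, \cite{HL1-2} and \cite{DLM1}, and the twisted skew-symmetry operator $\Omega(\mathcal{Y})(w_2,z)w_1=e^{zL(-1)}\mathcal{Y}(w_1,e^{\pi i}z)w_2$, with inverse built from $e^{-\pi i}$, is exactly the construction of \cite{DLM1} extending Proposition 5.4.7 of \cite{FHL}. Your handling of the truncation condition, the $L(-1)$-derivative property, and the two-sided inverse is fine.

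However, the one step you single out as delicate is resolved incorrectly, and as written it would fail. You propose to obtain the generalized commutativity of $\Omega(\mathcal{Y})$ (exponent $n'+j_2/T_2$) from the generalized commutativity of $\mathcal{Y}$ (exponent $n+j_1/T_1$) by ``absorbing the phase $e^{(n+j_1/T_1)\pi i}$ and reorganizing.'' A constant phase cannot change the exponent of $(z_1-z_2)$: no reorganization turns $(z_1-z_2)^{n+j_1/T_1}$ into $(z_1-z_2)^{n'+j_2/T_2}$ unless $j_1/T_1\equiv j_2/T_2\pmod{\Z}$, which fails in general. What actually happens is that $\Omega$ permutes the three terms of the twisted Jacobi identity, so generalized commutativity and generalized associativity of Proposition \ref{p2.14} are \emph{interchanged}, each keeping its own exponent: using the conjugation formula $e^{-z_2L(-1)}Y_{M_3}(u,z_1)e^{z_2L(-1)}=Y_{M_3}(u,z_1-z_2)$, the commutativity of $\Omega(\mathcal{Y})$ between $Y_{M_3}(u,z_1)$ and $\Omega(\mathcal{Y})(w_2,z_2)$ unwinds to the generalized associativity for $\mathcal{Y}$ --- both relations carry the exponent $j_2/T_2$ --- while the associativity of $\Omega(\mathcal{Y})$ unwinds to the generalized commutativity for $\mathcal{Y}$, both carrying $j_1/T_1$. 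The hypothesis $g_3=g_1g_2$ is not what converts exponents; rather (see Remark \ref{r2.14}) it guarantees the coset structure of the powers of $z$ in $\mathcal{Y}(w_1,z)w_2$ and of the fractional powers of $Y_{M_3}$ on the $g_3$-twisted module, so that the branch substitution $z\mapsto e^{\pi i}z$ and the $e^{zL(-1)}$-conjugation produce branch-consistent expressions. With this correction your plan goes through and coincides with the proof in the cited sources.
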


\section{Tensor products of twisted modules}

Throughout this section, we assume that $V$ is a regular and self-dual vertex operator algebra of CFT type
and $G$ is a finite abelian automorphism group of $V$.
The main goal of this section is to prove that for any $g,h\in G$ and for any $g$-twisted
$V$-module $M$ and $h$-twisted $V$-module $N,$ a tensor product of
$M$ and  $N$ exists and associativity holds. These results will be used
later to determine the tensor product of an untwisted module with a twisted
module for the study on permutation orbifolds.

Denote by ${\cal C}_{V}$ the category of ordinary $V$-modules and  by ${\cal C}_{V^G}$ the category of ordinary $V^G$-modules.
From \cite{H}, both ${\cal C}_{V}$ and ${\cal C}_{V^{G}}$ are modular tensor
categories.  Furthermore, from  \cite{KO} and \cite{CKM},  $V$
is a commutative associative algebra in ${\cal C}_{V^{G}}$ as
$V=\oplus_{\chi\in {\rm irr}(G)}V^{\chi}$, where ${\rm irr}(G)$ is the set of irreducible characters of $G$ and $V^{\chi}$ are irreducible $V^{G}$-modules (cf. \cite{DJX}), i.e., simple objects in
${\cal C}_{V^{G}}.$

Recall the following definition from \cite{KO} and \cite{CKM} (see Proposition 3.46 of \cite{CKM}):

\begin{definition}\label{def-rep(V)}
Denote by ${\rm Rep}(V)$ the subcategory of ${\cal C}_{V^{G}}$ consisting of every $V^{G}$-module
$W$ together with a $V^{G}$-intertwining operator $Y_W(\cdot,z)$ of type $\binom{W}{V\ W}$
such that the following conditions are satisfied:

1. (Associativity) For any $u,v\in V,$ $w\in W$ and $w'\in W'$, the formal series
\[
\langle w',Y_{W}(u,z_{1})Y_{W}(v,z_{2})w\rangle
\]
and
\[
\langle w',Y_{W}(Y(u,z_{1}-z_{2})v,z_{2}))w\rangle
\]
converge on the domains $|z_{1}|>|z_{2}|>0$ and
 $|z_{2}|>|z_{1}-z_{2}|>0$, respectively, to multivalued analytic functions
which coincide on their common domain.

2. (Unit) $Y_{W}({\bf 1},z)=Id_{W}.$
\end{definition}

Recall from \cite{KO} (see also \cite{CKM}) that there is a categorical
tensor product functor $\boxtimes_{V}$ in the category of ${\rm Rep}(V)$, which is associative.
We denote by $M\boxtimes_VN$ the tensor product of $M$ and $N$ in ${\rm Rep}(V)$.
Then for any two $V^G$-modules
$M,N$ in ${\rm Rep}(V),$ $M\boxtimes_{V}N$ is a quotient of $M\boxtimes^{V^G} N.$ Moreover, ${\rm Rep}(V)$ is a fusion category. In particular,
${\rm Rep}(V)$ is a semisimple category with finitely many inequivalent simple objects.

From the generalized associativity for a twisted module (see Proposition \ref{p2.14}), we immediately have:

\begin{lemma}\label{l3.1}
If $W$ is a $g$-twisted $V$-module with $g\in G$, then $W$ is an object of ${\rm Rep}(V).$ Furthermore, if
$W_i$ is a $g_i$-twisted $V$-module with $g_i\in G$ for $i=1,2$, then $W_1$ and $W_2$ are equivalent objects
in ${\rm Rep}(V)$ if and only if $g_1=g_2$ and $W_1\simeq W_2$ as twisted $V$-modules.
\end{lemma}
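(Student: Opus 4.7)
The plan is to verify the axioms of Definition \ref{def-rep(V)} directly for any $g$-twisted $V$-module $W$, and then to extract the twist $g$ from the fractional-power structure of $Y_{W}$ in order to establish the uniqueness clause.

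For the first assertion, the $L(0)$-grading on $W$ together with the finite-dimensional weight spaces and lower truncation built into the definition of a $g$-twisted module restrict to the corresponding data on $W$ viewed as a $V^{G}$-module, so $W$ is an ordinary $V^{G}$-module and hence lies in $\mathcal{C}_{V^{G}}$ by the regularity of $V^{G}$ (Theorem \ref{CM}). Next I would check that $Y_{W}(\cdot,z)$ is a $V^{G}$-intertwining operator of type $\binom{W}{V\ W}$: for $u\in V^{G}$, which lies in the trivial $g$-eigenspace of $V$, the twisted Jacobi identity (\ref{Jacobi for twisted V-module}) collapses to the ordinary intertwining-operator Jacobi identity, while the $L(-1)$-derivative property and the unit axiom $Y_{W}(\mathbf{1},z)=Id_{W}$ are part of the twisted-module definition. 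The analytic associativity axiom of Definition \ref{def-rep(V)} then follows from Proposition \ref{p2.15} applied with $\mathcal{Y}=Y_{W}$, $g_{1}=1$, $g_{2}=g_{3}=g$, which gives convergence of $\langle w',Y_{W}(u,z_{1})Y_{W}(v,z_{2})w\rangle$ and $\langle w',Y_{W}(Y(u,z_{1}-z_{2})v,z_{2})w\rangle$ (up to a common factor $z_{1}^{j_{2}/T}$) to the same multivalued analytic function on their common domain.

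For the second assertion, the nontrivial direction is that any isomorphism $\phi\colon W_{1}\to W_{2}$ in ${\rm Rep}(V)$ forces $g_{1}=g_{2}$. The condition $\phi\circ Y_{W_{1}}(v,z)=Y_{W_{2}}(v,z)\circ\phi$ is an equality of formal series in $z^{1/T}$, where $T$ is a common multiple of the periods of $g_{1}$ and $g_{2}$, and the key point is that the fractional part modulo $\mathbb{Z}$ of the exponents of $z$ is governed by the corresponding eigenvalue of the relevant automorphism. Fixing $v$ in the $e^{2\pi ir/T_{1}}$-eigenspace of $g_{1}$, every power of $z$ on the left-hand side lies in the single coset $r/T_{1}-1+\mathbb{Z}$. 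Decomposing $v=\sum_{s}v^{(s)}$ into $g_{2}$-eigenvectors with eigenvalues $e^{2\pi is/T_{2}}$ and matching fractional classes coset-by-coset forces $Y_{W_{2}}(v^{(s)},z)\phi(w)=0$ whenever $e^{2\pi is/T_{2}}\ne e^{2\pi ir/T_{1}}$; surjectivity of $\phi$ upgrades this to $Y_{W_{2}}(v^{(s)},z)=0$ on all of $W_{2}$. The twisted associativity formula (\ref{associativity formula}) shows that the annihilator $\{u\in V\mid Y_{W_{2}}(u,z)=0\}$ is an ideal of $V$, and a regular vertex operator algebra is simple (its decomposition as a $V$-module must have $\mathbf{1}$ in a single irreducible summand, which, since $v=v_{-1}\mathbf{1}$ for all $v$, must equal $V$). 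Hence $v^{(s)}=0$ for every such $s$, which gives $g_{2}v=g_{1}v$ for all $v\in V$ and thus $g_{1}=g_{2}=:g$. The intertwining identity on all of $V$ is then exactly the definition of a $g$-twisted $V$-module isomorphism, and the converse direction is immediate.

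The main obstacle I anticipate is the fractional-power matching step: one has to argue rigorously that the formal identity splits coset-by-coset in $\mathbb{Q}/\mathbb{Z}$. This rests on the linear independence of monomials $z^{\alpha}$ with distinct $\alpha\in\mathbb{Q}/\mathbb{Z}$ inside $\mathrm{Hom}(W_{1},W_{2})\{z\}$, together with the fact that, for any fixed $v$, only finitely many such cosets appear in the combined $g_{1},g_{2}$-spectrum of $v$, so distinct fractional-power sectors can indeed be compared separately.
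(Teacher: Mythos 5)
Your proposal is correct, and for the first assertion it follows the paper's own (very brief) route: the paper simply cites the generalized associativity of Proposition \ref{p2.14} (whose analytic form is Proposition \ref{p2.15}) to see that $(W,Y_W)$ satisfies the two axioms of Definition \ref{def-rep(V)}; your verification that $W$ is an ordinary $V^{G}$-module and that the twisted Jacobi identity collapses, for $u\in V^{G}$, to the untwisted intertwining-operator identity is exactly the implicit content there, and the spurious factor $z_1^{j_2/T}$ is harmless for multivalued-analytic convergence, as you note. Where you go beyond the paper is the ``furthermore'' clause, which the paper states without proof: your argument --- splitting the identity $\phi\circ Y_{W_1}(v,z)=Y_{W_2}(v,z)\circ\phi$ coset-by-coset in the exponents of $z$ (these lie in $-r/T_1-1+\mathbb{Z}$ rather than $r/T_1-1+\mathbb{Z}$ under the paper's conventions, but only the class modulo $\mathbb{Z}$ matters), using surjectivity of $\phi$ to conclude $Y_{W_2}(v^{(s)},z)=0$ for mismatched $g_2$-eigencomponents, and then killing these components via simplicity of $V$ --- is sound. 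Two small points deserve flagging: the weak-associativity argument that the annihilator $\{u\in V \mid Y_{W_2}(u,z)=0\}$ is a two-sided ideal should be recorded (it follows from relation (\ref{associativity formula}) together with skew-symmetry and the $L(-1)$-derivative property), and your parenthetical proof that a regular vertex operator algebra is simple genuinely needs the CFT-type hypothesis ($V_0=\mathbb{C}\mathbf{1}$ forces $\mathbf{1}$, of weight $0$, into a single irreducible summand); both hypotheses are in force in Section 3, so the argument is complete in the setting of the lemma.
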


On the other hand, we have the following lemma (see also \cite{K1,K2}).

\begin{lemma}\label{l3.1'}
If $W$ is a simple object in ${\rm Rep}(V),$ then $W$ is an irreducible $g$-twisted $V$-module  for some
$g\in G.$
\end{lemma}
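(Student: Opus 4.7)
The plan is to recover a finite-order automorphism $g\in G$ from the $z$-monodromy of the intertwining operator $Y_W$ and then apply Proposition~\ref{p2.15} to recognize $(W,Y_W)$ as a $g$-twisted $V$-module.

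First, by Theorem~\ref{CM} the subalgebra $V^G$ is regular, so $W$ decomposes as a finite direct sum $W=\bigoplus_i U_i$ of irreducible $V^G$-modules with rational conformal weights (using \cite{DLM3}). The restriction $Y_W|_{V^\chi}$ for $\chi\in\text{irr}(G)$ is a $V^G$-intertwining operator of type $\binom{W}{V^\chi\ W}$, so for $v\in V^\chi$, $w\in U_i$, and $w'\in U_j'$, the powers of $z$ appearing in $\langle w',Y_W(v,z)w\rangle$ lie in a single coset of $\Q/\Z$.

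Second, I track how these cosets combine via the associativity axiom of Definition~\ref{def-rep(V)}. For $v_1\in V^{\chi_1}$ and $v_2\in V^{\chi_2}$, the iterate $Y(v_1,z_1-z_2)v_2$ lies in $V^{\chi_1\chi_2}((z_1-z_2))$ because $V=\bigoplus_\chi V^\chi$ is an $\text{irr}(G)$-grading of $V$ as an algebra. Comparing the monodromies of the two sides of the associativity equation, and using the simplicity of $W$ to isolate a single coset independent of the components $U_i,U_j$, yields an additive map $\lambda\colon\text{irr}(G)\to\C^\ast$, i.e.\ a group homomorphism. Since $G$ is finite abelian, Pontryagin duality produces a unique $g\in G$ with $\chi(g)=\lambda(\chi)$ for every $\chi\in\text{irr}(G)$.

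Third, setting $T=|g|$ and $V^r=\{v\in V\mid gv=e^{2\pi ir/T}v\}$, the construction of $g$ forces $Y_W(v,z)\in z^{-r/T}(\text{End}(W))[[z,z^{-1}]]$ for $v\in V^r$, the correct form of a $g$-twisted vertex operator. The twisted Jacobi identity for $(W,Y_W)$ now follows by combining the associativity of Definition~\ref{def-rep(V)} with commutativity supplied by the braided-commutative algebra structure of $V$ in ${\cal C}_{V^G}$ (see \cite{KO,CKM}), via Proposition~\ref{p2.15} together with Remark~\ref{equiv. of Jacobi Idenitty}. The unit axiom holds by assumption, and $[L(-1),Y_W(v,z)]=\frac{d}{dz}Y_W(v,z)$ follows from the action of $\omega\in V^G\subset V$. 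Any proper nonzero $g$-twisted $V$-submodule of $W$ would be a proper subobject in $\text{Rep}(V)$, contradicting simplicity; hence $W$ is irreducible as a $g$-twisted $V$-module.

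The main obstacle will be the second step: proving that the coset $r_\chi\in\Q/\Z$ is well defined independently of the components $U_i,U_j$ and is multiplicative in $\chi$. This requires careful monodromy bookkeeping under the analytic continuations furnished by associativity, leveraging the rationality results of \cite{DLM3}, the simplicity of $W$ in $\text{Rep}(V)$, and the abelian structure of $G$.
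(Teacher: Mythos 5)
Your strategy---recovering $g$ from the monodromy of $Y_W$ and then verifying the twisted Jacobi identity---is viable and is essentially how this fact is established elsewhere in the literature, but as written it has a genuine gap exactly where you place all the weight: step two is asserted, not proved, and ``simplicity of $W$ plus monodromy bookkeeping'' does not suffice to prove it. Note first that each $V^{\chi}$ has \emph{integer} conformal weights (it is a $V^{G}$-submodule of $V$), so the coset of exponents in a nonzero block of $Y_W|_{V^{\chi}}$ from $U_i$ to $U_j$ is $h_j-h_i \pmod{\Z}$, which a priori depends on $(i,j)$ and not on $\chi$; what you must show is that, for fixed $\chi$, every block activated by $V^{\chi}$ shifts the weight coset by the \emph{same} amount $r_{\chi}$, and that $\chi\mapsto e^{2\pi i r_{\chi}}$ is multiplicative. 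The actual mechanism behind this is the simple-current structure of the $V^{\chi}$: one uses $\qdim_{V^{G}}V^{\chi}=1$, so that $V^{\chi}\boxtimes U_i$ is irreducible and (by simplicity of $W$, via the surjection from $V\boxtimes U_i$ onto $W$) every $U_j$ occurring in $W$ is of the form $V^{\chi'}\boxtimes U_i$; the monodromy scalar of $V^{\chi}$ with $U_j$ is then controlled by the balancing axiom $\theta=e^{2\pi i L(0)}$ of the modular tensor category ${\cal C}_{V^{G}}$, multiplicativity of monodromy scalars from the hexagon axiom, and triviality of the monodromy among the $V^{\chi}$'s, which comes from commutativity of the algebra $V$. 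None of these inputs appears in your sketch, and the associativity axiom of Definition \ref{def-rep(V)} alone will not produce them.

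Step three also contains a circularity: Proposition \ref{p2.15} and Theorem \ref{inter1} are formulated for data in which $M_2,M_3$ \emph{already} carry twisted module structures---the Jacobi identity they deliver quantifies over $Y_{M_2}$ and $Y_{M_3}$---so they cannot be invoked to manufacture the twisted structure on $W$ itself; you would instead need a criterion in the spirit of Remark \ref{equiv. of Jacobi Idenitty} (weak associativity together with the established $z^{-r/T}$ shape of $Y_W(v,z)$ for $v\in V^{r}$ implies the twisted Jacobi identity for $Y_W$ against itself). Moreover, your appeal to ``commutativity supplied by the braided-commutative algebra structure'' is incorrect for the module action: $\mu_W\circ c_{W,V}\circ c_{V,W}=\mu_W$ characterizes \emph{local} modules, i.e.\ precisely the untwisted case $g=1$, whereas for $g\neq 1$ the nontrivial monodromy is the whole point. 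Finally, be aware that the paper proves the lemma by an entirely different, global argument that sidesteps all of this: using $\FPdim({\cal C}_{V^{G}})=\bigl(\FPdim_{{\cal C}_{V^{G}}}V\bigr)\bigl(\FPdim({\rm Rep}(V))\bigr)$ from \cite{DMNO} and \cite{ENO}, together with $\qdim_{V^{G}}V=o(G)$, ${\rm glob}(V^{G})=o(G)^{2}{\rm glob}(V)$ and ${\rm glob}(V)=\sum_{M\in{\cal M}(g)}(\qdim_{V}M)^{2}$ for every $g\in G$ (\cite{DJX}, \cite{DRX}, \cite{ADJX}), it computes $\FPdim({\rm Rep}(V))=o(G)\,{\rm glob}(V)$ and observes that the irreducible $g$-twisted modules for $g\in G$---already simple objects by Lemma \ref{l3.1}---saturate this total, so no other simple objects can exist. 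Your route, if completed with the simple-current and balancing machinery above, would have the virtue of identifying $g$ explicitly, but it requires substantially more than the sketch provides.
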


\begin{proof}  We need to use the Frobenius-Perron dimension in a fusion category 
(see  \cite{BK}, \cite{ENO},  \cite{DMNO}). Let $\cc$ be a fusion category and
let $K(\cc)$ denote the Grothendieck ring of $\cc.$  According to \cite{ENO},
there exists
a unique ring homomorphism $\FPdim_{\cc}: K(\cc)\to \R$ such that $\FPdim_{\cc}(X) > 0$ for all
nonzero $X\in\cc$, where
$\FPdim_{\cc}(X)$ is called the Frobenius-Perron dimension of $X\in\cc.$
Furthermore, one has the Frobenius-Perron dimension for the category $\cc$:
$$ \FPdim (\cc)=\sum_{X\in {\mathcal O}(\cc)}\FPdim_{\cc}(X)^2,$$
where ${\mathcal O}(\cc)$ denotes the equivalence classes of the simple objects in $\cc.$
Note that both ${\cal C}_{V^{G}}$ and ${\rm Rep}(V)$ are fusion categories.
It follows from
 \cite{DMNO} and \cite{ENO} that
\begin{equation}\label{equationnew}(\FPdim_{{\cal C}_{V^{G}}} V) (\FPdim ({\rm Rep}(V))) = \FPdim ({\cal C}_{V^{G}}).
\end{equation}

We also need quantum dimensions and global dimensions from \cite{DJX} and \cite{DRX}.
Let $M$ be a $g$-twisted $V$-module with $g\in G$ and set ${\rm ch}_qM={\rm tr}|_Mq^{L(0)-c/24}.$
Then ${\rm ch}_qM$ converges to a holomorphic function $\chi_M(\tau)$ on the upper half plane with $q=e^{2\pi i\tau}$ (see \cite{Z}, \cite{DLM3}). The quantum dimension of $M$ over $V$ is defined as
$$
\qdim_{V}M=\lim_{y\to 0}\frac{\chi_M(iy)}{\chi_V(iy)}.$$
From  \cite{ADJR} we have
$$\qdim_{V}M=\FPdim_{{\rm Rep}(V)}M,\  \mbox{ and }\   \qdim_{V^G}W= \FPdim_{{\cal C}_{V^{G}}} W$$
 for any $V^G$-module $W$.
Denote the equivalence classes of irreducible $g$-twisted $V$-modules by ${\cal M}(g).$
The global dimension of $V$ is defined as
$${\rm glob}(V)=\sum_{M\in {\cal M}(1)}(\qdim_VM)^2.$$
It follows that ${\rm glob}(V^G)=\FPdim(\cc_{V^G}).$
Recall from \cite{DJX} and \cite{DRX}  that
$$\qdim_{V^G}V=o(G),$$
 $${\rm glob}(V^G)=o(G)^2{\rm glob}(V),$$
 $${\rm glob}(V)=\sum_{M\in {\cal M}(g)}(\qdim_VM)^2$$
for any $g\in G.$ Then we have
$$\FPdim_{{\cal C}_{V^{G}}} V= \qdim_{V^G}V=o(G),\   \   \   \
\FPdim(\cc_{V^G})={\rm glob}(V^G)=o(G)^2{\rm glob}(V).$$
Combining these relations with equation (\ref{equationnew}) we obtain
$$\FPdim ({\rm Rep}(V))=o(G){\rm glob}(V).$$

Since for any $g\in G$,  every irreducible $g$-twisted module is a simple object in ${\rm Rep}(V)$ by Lemma \ref{l3.1}, we get
$$\FPdim {\rm Rep}(V)\geq \sum_{g\in G}\sum_{M\in {\cal M}(g)}(\qdim_VM)^2=o(G){\rm glob}(V)=\FPdim {\rm Rep}(V).$$
Thus  $$\FPdim {\rm Rep}(V)= \sum_{g\in G}\sum_{M\in {\cal M}(g)}(\qdim_VM)^2,$$
which implies that every simple object in ${\rm Rep}(V)$ is an irreducible $g$-twisted $V$-module for some $g\in G.$
\end{proof}

\begin{remark} Note that Lemma \ref{l3.1'} holds for any (not necessarily abelian) finite automorphism group $G$
  as long as $V^G$ is rational and $C_2$-cofinite.
\end{remark}


Let $g,h\in G$. For any $g$-twisted module $M$ and $h$-twisted module $N$, as they are objects in ${\rm Rep}(V)$
by Lemma \ref{l3.1},  $M\boxtimes_{V}N$ exists in ${\rm Rep}(V).$ We
will show next that the tensor product in the sense of Definition \ref{d2.13}
exists and is isomorphic to $M\boxtimes_{V}N.$
We first prove that a categorical intertwining
operator is the same as an intertwining operator in the sense of Definition \ref{Intertwining operator for twisted modules}.

The following is a generalization of Theorem 3.53 in \cite{CKM} where $g_{i}=1$ for $i=1,2,3$
with a similar proof:

\begin{theorem}\label{inter1}
Let $g_1,g_2, g_3$ be commuting finite order automorphisms of $V$ and
let $M_i$ be a $g_i$-twisted $V$-module for $i=1,2,3.$
Assume
\begin{align*}
{\cal Y}(\cdot,z)\cdot:\  M_{1}\otimes M_{2} & \longrightarrow M_{3}\{z\}\\
w_{1}\otimes w_{2} & \longmapsto{\cal Y}(w_{1},z)w_{2}=\sum_{n\in\mathbb{C}}(w_{1})_{n}w_2z^{-n-1}
\end{align*}
is a bilinear map satisfying the following conditions:
\begin{enumerate}
\item \emph{Lower truncation:} For any $w_{1}\in M_{1}$, $w_{2}\in M_{2}$
and $\alpha \in\C$, $(w_{1})_{\alpha+m}w_{2}=0$ for sufficiently large $m\in\mathbb{N}$.
\label{item:complexLower}
\item The \emph{$L(-1)$-derivative formula}: $[L(-1),\cY(w_{1},z)]=\dfrac{d}{dx}\cY(w_{1},z)=\cY(L(-1)w_{1},z)$
for $w_{1}\in M_{1}$. \label{item:complexL0}
\item The \emph{$L(0)$-bracket formula:} $[L(0),\cY(w_{1},z)]=z\dfrac{d}{dz}\cY(w_{1},z)+\cY(L(0)w_{1},z)$
for $w_{1}\in M_{1}$. \label{item:complexL-1}
\item For any $w_{3}'\in M_{3}'$, $w_{2}\in M_{2}$, $v\in V$, $w_{1}\in M_{1}$,
the following series define multivalued analytic functions on the
indicated domains, which coincide on the respective
intersections of their domains: \label{item:complexMain}
\begin{align}
\langle w_{3}',Y_{M_{3}}(a,z_{1})\cY(w_{1},z_{2})w_{2}\rangle & \quad\quad\mbox{ on }|z_{1}|>|z_{2}|>0,\label{eqn:prodYI}\\
\langle w_{3}',\cY(Y_{M_{1}}(a,z_{1}-z_{2})w_{1},z_{2})w_{2}\rangle & \quad\quad\mbox{ on }|z_{2}|>|z_{1}-z_{2}|>0,\label{eqn:assocIY}\\
\langle w_{3}',\cY(w_{1},z_{2})Y_{M_{2}}(a,z_{1})w_{2}\rangle & \quad\quad  \mbox{ on }|z_{2}|>|z_{1}|>0.\label{eqn:prodIY}
\end{align}
Specifically, the principal branches (that is, $z_{2}=e^{\log z_{2}}$)
of the first and second multivalued functions yield single-valued
functions which coincide on the simply connected domain
\[
S_{1}:=\lbrace(z_{1},z_{2})\in\C^{2}\,\vert\,\mathrm{Re}\,z_{1}>\mathrm{Re}\,z_{2}>\mathrm{Re}(z_{1}-z_{2})>0,\,\mathrm{Im}\,z_{1}>\mathrm{Im}\,z_{2}>\mathrm{Im}(z_{1}-z_{2})>0\rbrace
\]

and the principal branches of the second and third multivalued functions
yield single-valued functions which coincide on the simply connected domain
\[
S_{2}:=\lbrace(z_{1},z_{2})\in\C^{2}\,\vert\,\mathrm{Re}\,z_{2}>\mathrm{Re}\,z_{1}>\mathrm{Re}(z_{2}-z_{1})>0,\,\mathrm{Im}\,z_{2}>\mathrm{Im}\,z_{1}>\mathrm{Im}(z_{2}-z_{1})>0\rbrace.
\]

\item There exists a multivalued analytic function $g$ defined on $\{(z_{1},z_{2})\in\mathbb{C}^{2}\,|\,z_{1},z_{2},z_{1}-z_{2}\neq0\}$
that restricts to the three multivalued functions above on their respective
domains. \label{item:complexMainFunction}
\end{enumerate}
Then $\cY$ is an intertwining operator of type $\binom{M_{3}}{M_{1}\,M_{2}}$
in the sense of Definition \ref{Intertwining operator for twisted modules}.
\end{theorem}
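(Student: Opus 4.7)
The plan is to deduce the twisted Jacobi identity (\ref{Twisted Intertwining}) from the analytic compatibility conditions (4) and (5), following the strategy of Theorem 3.53 of \cite{CKM} but tracking the fractional exponents arising from the twistings. By Proposition \ref{p2.14} it is enough to establish the generalized commutativity and generalized associativity, since the $L(-1)$-derivative is hypothesis (2) and the lower truncation for $\cY$ is hypothesis (1). I would fix $v\in V^{(j_1,j_2)}$ and $L(0)$-eigenvectors $w_1\in M_1$, $w_2\in M_2$, $w_3'\in M_3'$, and analyze the three formal series $\langle w_3', Y_{M_3}(v,z_1)\cY(w_1,z_2)w_2\rangle$, $\langle w_3', \cY(Y_{M_1}(v,z_1-z_2)w_1,z_2)w_2\rangle$, and $\langle w_3', \cY(w_1,z_2)Y_{M_2}(v,z_1)w_2\rangle$ appearing in condition (4).

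The $L(0)$-bracket (3), together with the $L(0)$-action on $V^{(j_1,j_2)}$ and on each $M_i$, pins down the rational cosets containing the exponents of $z_1$ and $z_2$ in each of the three series and, combined with the lower truncation of $\cY$ and of $Y_{M_2}$, $Y_{M_3}$, shows that each series is lower-truncated in the relevant variables. Hypotheses (4) and (5) then identify each formal series uniquely as the power-series expansion of a single branch of a common multivalued analytic function $g$ on $\mathbb{D}$, in its declared domain. For $v\in V^{(j_1,j_2)}$, the fractional powers produced by $Y_{M_3}$ and $\cY$ exactly match $j_1/T_1+j_2/T_2 \pmod{\Z}$, so multiplying by $(z_1-z_2)^{j_1/T_1}z_1^{j_2/T_2}$ cancels the fractional monodromy of $g$ about $z_1=z_2$ and $z_1=0$. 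The resulting single-valued meromorphic function on $\mathbb{D}$ has poles only along $z_1 z_2(z_1-z_2)=0$, and the lower-truncation bounds force it to be rational of the form $h(z_1,z_2)/[z_1^r z_2^s (z_1-z_2)^t]$ for some $h\in\C[z_1,z_2]$ and $r,s,t\in\Z_{\ge 0}$.

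The Jacobi identity then follows by the standard delta-function calculus: each of the three formal series equals, in its domain, the canonical expansion of the same rational function, and taking $\text{Res}_{z_0}$ against $z_0^{-1}\delta\!\left(\tfrac{z_1-z_2}{z_0}\right)$, $-z_0^{-1}\delta\!\left(\tfrac{z_2-z_1}{-z_0}\right)$, and $z_2^{-1}\delta\!\left(\tfrac{z_1-z_0}{z_2}\right)$, with the prescribed fractional factors $(\cdot)^{j_1/T_1}$ or $(\cdot)^{-j_2/T_2}$ inserted, converts the three canonical expansions into the three terms of (\ref{Twisted Intertwining}) paired with $w_3'$. Since $w_3'$ and the $L(0)$-eigenvectors are arbitrary and the operator identity is determined termwise by such pairings, this yields the twisted Jacobi identity on $M_2$.

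The principal obstacle is the branch-of-fractional-power bookkeeping. One must verify that the principal branches specified on the simply-connected regions $S_1,S_2$ in condition (4) agree with the formal binomial expansions of $(z_1-z_2)^{j_1/T_1}$, $(-z_2+z_1)^{j_1/T_1}$, and $\left(\tfrac{z_1-z_0}{z_2}\right)^{-j_2/T_2}$ used in the domains $|z_1|>|z_2|$, $|z_2|>|z_1|$, and $|z_2|>|z_1-z_2|$ respectively, so that the formal delta identities faithfully implement the analytic continuation of $g$ across these overlaps. Once this alignment is established for each $v\in V^{(j_1,j_2)}$, the remainder of the argument is parallel to the untwisted case in \cite{CKM}.
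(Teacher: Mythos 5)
Your proposal follows essentially the same route as the paper's proof: both adapt Theorem 3.53 of \cite{CKM}, cancel the fractional monodromy in $z_{1}$ by multiplying the common multivalued function $g$ by $(z_{1}-z_{2})^{j_{1}/T_{1}}z_{1}^{j_{2}/T_{2}}$, and then conclude the twisted Jacobi identity via the equivalence recorded in Proposition \ref{p2.15} (your closing delta-function calculus is exactly the content of that proposition, which rests on Propositions 7.6, 7.8, 7.9 of \cite{DL}). One step of yours is overstated, however: multiplying by $(z_{1}-z_{2})^{j_{1}/T_{1}}z_{1}^{j_{2}/T_{2}}$ kills only the monodromy around $z_{1}=z_{2}$ and $z_{1}=0$; nothing cancels the monodromy around $z_{2}=0$, since the exponents of $z_{2}$ produced by $\cY(w_{1},z_{2})$ lie in a coset $r+\Z$ with $r\in\Q$ generally non-integral (determined by the conformal weights of the irreducible $V^{G}$-modules involved, which is where rationality of $V^{G}$ enters). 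So the corrected product is \emph{not} a single-valued rational function of the form $h(z_{1},z_{2})/[z_{1}^{r}z_{2}^{s}(z_{1}-z_{2})^{t}]$ on $\mathbb{D}$; it is of the form $\sum_{i}h_{i}(z_{1},z_{2})z_{2}^{r_{i}}/[z_{1}^{r}z_{2}^{s}(z_{1}-z_{2})^{t}]$ with $r_{i}\in\Q$, exactly as Proposition \ref{p2.15}(a) allows. The paper sidesteps this issue by fixing $z_{2}$ (with a choice of branch $z_{2}=e^{\log z_{2}}$) and showing that the resulting one-variable function $\widetilde{f}_{z_{2}}(z_{1})$ is single-valued on $\C^{\times}\setminus\{z_{2}\}$ with poles only at $0,z_{2},\infty$, hence rational in $z_{1}$; you should either adopt that device or restrict at the outset to homogeneous vectors in irreducible $V^{G}$-modules and carry the overall factor $z_{2}^{r}$ through your argument. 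With that repair, your proof and the paper's coincide.
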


\begin{proof} The proof here is a slight modification of that of Theorem 3.53 in \cite{CKM}.
First, fix $z_{2}\in\mathbb{C}^{\times}$ with  $\mathrm{Re}\,z_{2},\mathrm{Im}\,z_{2}>0$.
Then $f_{z_{2}}(z_{1}):=g(z_{1},e^{\log z_{2}})z_{1}^{j_{1}/T_{1}}(z_{1}-z_{2})^{j_{2}/T_{2}}$
yields a possibly multivalued analytic function of $z_{1}$ defined
on $\{z_{1}\in\mathbb{C}^{\times}\,|\,z_{1}\neq z_{2}\}$.
 However,
note that $f_{z_{2}}$ has single-valued restrictions to each of $\{z_{1}\in\mathbb{C}^{\times}\,|\,|z_{1}|>|z_{2}|\}$,
$\{z_{1}\in\mathbb{C}^{\times}\,|\,|z_{2}|>|z_{1}-z_{2}|>0\}$ and
$\{z_{1}\in\mathbb{C}^{\times}\,|\,|z_{2}|>|z_{1}|\}$, and these
restrictions coincide on certain simply-connected subsets of the
intersections of these domains. Thus these restrictions define a single-valued
analytic function on the union of these domains. This implies that
$f_{z_{2}}$ has a single-valued restriction $\widetilde{f}_{z_{2}}$
to $\C^{\times}\setminus\lbrace z_{2}\rbrace$ with possible poles
at $0,z_{2},\infty.$ Thus $\widetilde{f}_{z_{2}}$ is a rational
function. It follows from Proposition \ref{p2.15} that ${\cal Y}$
satisfies the Jacobi identity and hence it is an intertwining operator of type
$\binom{M_{3}}{M_{1}\,M_{2}}$ in the sense of Definition \ref{Intertwining operator for twisted modules}.
\end{proof}

\begin{theorem}\label{t3.2}
Let $g_{1},g_2,g_3\in G$ and let $M_{i}$ be $g_{i}$-twisted $V$-modules for $i=1,2,3$.
Then the space of intertwining operators
of type $\binom{M_3}{M_{1}\ M_{2}}$ in the sense of Definition
\ref{Intertwining operator for twisted modules} is linearly isomorphic
to the space of the categorical ${\rm Rep}(V)$ intertwining operators
of type $\binom{M_3}{M_{1}\ M_{2}}.$
Furthermore, a tensor product module $M_{1}\boxtimes^{V}M_{2}$ exists
and $M_{1}\boxtimes^{V}M_{2}$ and $M_{1}\boxtimes_{V}M_{2}$
are isomorphic $g_1g_2$-twisted $V$-modules.
\end{theorem}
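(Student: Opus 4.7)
The plan is to prove the theorem in three pieces: first establish the equivalence of the two notions of intertwining operator between twisted modules, then construct $M_1\boxtimes^V M_2$ inside the fusion category ${\rm Rep}(V)$, and finally verify its universal property in the sense of Definition \ref{d2.13}.

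\textbf{Equivalence of intertwining operators.} By Lemma \ref{l3.1} each $M_i$ is an object of ${\rm Rep}(V)$. For the forward direction, suppose $\mathcal{Y}(\cdot,z)$ is an intertwining operator in the sense of Definition \ref{Intertwining operator for twisted modules}. Proposition \ref{p2.15} says that the three formal matrix coefficients (product $Y_{M_3}\mathcal{Y}$, iterate $\mathcal{Y}(Y_{M_1})$, and opposite product $\mathcal{Y}\,Y_{M_2}$) converge to a common multivalued analytic function on overlapping domains, which is precisely the associativity data of a categorical ${\rm Rep}(V)$-intertwining operator in Definition \ref{def-rep(V)}. Conversely, a categorical intertwining operator satisfies the hypotheses of Theorem \ref{inter1} (lower truncation, the $L(-1)$- and $L(0)$-formulas, and the coincidence of the three multivalued analytic continuations), and that theorem produces the twisted Jacobi identity; hence $\mathcal{Y}$ lies in $I_V\binom{M_3}{M_1\ M_2}$. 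This gives the stated linear isomorphism of the two spaces.

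\textbf{Construction of $M_1\boxtimes^V M_2$.} Since $M_1,M_2$ are objects of ${\rm Rep}(V)$, the categorical tensor product $M_1\boxtimes_V M_2$ exists in ${\rm Rep}(V)$, equipped with a universal categorical intertwining operator $F(\cdot,z)$. As ${\rm Rep}(V)$ is semisimple with finitely many simple objects, $M_1\boxtimes_V M_2$ decomposes into a finite direct sum of simples, and by Lemma \ref{l3.1'} each simple summand is an irreducible $h$-twisted $V$-module for some $h\in G$. Projecting $F(\cdot,z)$ onto each summand gives, via the equivalence above, an intertwining operator in the sense of Definition \ref{Intertwining operator for twisted modules} into an $h$-twisted module, and the selection rule of Remark \ref{r2.14} forces $h=g_1g_2$ on every summand supporting a nonzero projection. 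Hence $M_1\boxtimes_V M_2$ is a $g_1g_2$-twisted $V$-module and $F(\cdot,z)$ is an intertwining operator of type $\binom{M_1\boxtimes_V M_2}{M_1\ M_2}$ in the sense of Definition \ref{Intertwining operator for twisted modules}.

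\textbf{Universal property and identification.} Given any weak $g_1g_2$-twisted $V$-module $W$ with an intertwining operator $I(\cdot,z)$ of type $\binom{W}{M_1\ M_2}$, the $g_1g_2$-rationality of $V$ (which follows from regularity) reduces us to the case where $W$ is a direct sum of ordinary $g_1g_2$-twisted modules, hence an object of ${\rm Rep}(V)$. Converting $I$ to a categorical intertwining operator by the equivalence above, the universal property of $M_1\boxtimes_V M_2$ in ${\rm Rep}(V)$ furnishes a unique ${\rm Rep}(V)$-morphism $\psi:M_1\boxtimes_V M_2\to W$ with $I=\psi\circ F$; since ${\rm Rep}(V)$-morphisms between twisted modules of matching twist agree with weak twisted $V$-module maps, $\psi$ is the required homomorphism. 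Thus $(M_1\boxtimes_V M_2,F)$ satisfies Definition \ref{d2.13}, so $M_1\boxtimes^V M_2$ exists and is isomorphic to $M_1\boxtimes_V M_2$ as $g_1g_2$-twisted $V$-modules. The main obstacle is the selection-rule argument: a priori $M_1\boxtimes_V M_2$ is only a $V^G$-module and could combine several twisted sectors, and this is resolved precisely by the combination of semisimplicity of ${\rm Rep}(V)$, the identification of its simple objects with twisted modules (Lemma \ref{l3.1'}), and the $g_1g_2$-selection rule (Remark \ref{r2.14}).
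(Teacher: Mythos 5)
Your first step coincides with the paper's: both directions of the linear isomorphism between the two notions of intertwining operator are obtained from Proposition \ref{p2.15} together with Theorem \ref{inter1} (the paper additionally quotes Theorem 3.44 of \cite{CKM} to translate categorical intertwining operators into bilinear maps satisfying the hypotheses of Theorem \ref{inter1}), and both use Remark \ref{r2.14} to dispose of the case $g_{3}\neq g_{1}g_{2}$. For the existence and identification of $M_{1}\boxtimes^{V}M_{2}$ you take a genuinely different, more categorical route: you invoke the universal categorical intertwining operator $F$ attached to $M_{1}\boxtimes_{V}M_{2}$ and deduce the universal property of Definition \ref{d2.13} from the adjunction identifying $\mathrm{Hom}_{\mathrm{Rep}(V)}(M_{1}\boxtimes_{V}M_{2},W)$ with the space of categorical intertwining operators of type $\binom{W}{M_{1}\ M_{2}}$. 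The paper instead runs a Huang--Lepowsky-style tautological construction (cf.\ Remark \ref{tautological}): it forms $W=\oplus_{i=1}^{r}(\C^{N_{i}}\otimes M_{3}^{i})$ with $N_{i}=N_{M_{1},M_{2}}^{M_{3}^{i}}$ over a complete list of irreducible $g_{1}g_{2}$-twisted modules, assembles $F=\sum e_{s}\otimes\cY_{j}^{i}$ from chosen bases of the spaces $I_{V}\binom{M_{3}^{i}}{M_{1}\ M_{2}}$, verifies the universal property by hand with explicit projections $p_{i,s}$, and only then concludes $M_{1}\boxtimes_{V}M_{2}\simeq W$. Your route is shorter and basis-free, but it silently imports from \cite{CKM} that the adjunction is implemented by composition with a canonical universal operator $F$, an input the paper's explicit construction avoids; conversely, the paper's construction makes the finiteness of the fusion numbers and the homomorphism $\psi$ completely concrete. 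One small patch is needed in your selection-rule step: you exclude wrong twists only on simple summands of $M_{1}\boxtimes_{V}M_{2}$ \emph{supporting a nonzero projection} of $F$, which leaves open the possibility of an $h$-twisted summand, $h\neq g_{1}g_{2}$, on which $F$ projects to zero. Close this either by noting that each simple object $S$ occurs in $M_{1}\boxtimes_{V}M_{2}$ with multiplicity equal to the dimension of the space of (categorical, hence by your first step twisted-module) intertwining operators of type $\binom{S}{M_{1}\ M_{2}}$, which vanishes unless $S$ is $g_{1}g_{2}$-twisted by Remark \ref{r2.14}, or by observing that $\psi\mapsto\psi\circ F$ is injective (the uniqueness half of the universal property), so no nonzero summand can carry zero projection. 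With that patch, your argument is correct and yields the same conclusion as the paper's.
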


\begin{proof} By Theorem 3.44 of \cite{CKM}, the space of the
categorical ${\rm Rep}(V)$ intertwining operators of type $\binom{M_3}{M_{1}\ M_{2}}$
is isomorphic to the space of bilinear maps
\begin{align*}
{\cal Y}(\cdot,z)\cdot :\   M_{1}\otimes M_{2} & \longrightarrow M_{3}\left\{ z\right\} \\
w_{1}\otimes w_{2} & \longmapsto{\cal Y}(w_{1},z)w_{2}=\sum_{n\in\mathbb{C}}(w_{1})_{n}w_2z^{-n-1}
\end{align*}
satisfying the conditions 1-5 in Theorem \ref{inter1}. By Proposition \ref{p2.15} and Theorem
\ref{inter1}, the latter space is the same as the space
of intertwining operators of type $\binom{M_3}{M_{1}\ M_{2}}$
in the sense of Definition \ref{Intertwining operator for twisted modules}. It follows from  Remark \ref{r2.14} that the space of the
categorical ${\rm Rep}(V)$ intertwining operators of type $\binom{M_3}{M_{1}\ M_{2}}$ is zero if $g_1g_2\ne g_3.$ So we assume $g_1g_2=g_3$ for the rest of the proof.

Let $\{M_{3}^{1},\dots, M_3^r\}$
be a complete set of equivalence class representatives of irreducible $g_{3}$-twisted $V$-modules.
From Lemma \ref{l3.1}, $M_3^{1},\dots,M_3^r$ are inequivalent  simple objects in  ${\rm Rep}(V)$.
From the first part,  the fusion rules $N_{M_{1},M_{2}}^{{M_{3}^{i}}}$ of the indicated types
 in the sense
of Definition \ref{Intertwining operator for twisted modules} and in ${\rm Rep}(V)$ are
the same.
As $M_{1}\boxtimes_{V}M_{2}$ is an object in ${\rm Rep}(V)$ (consisting of some $V^{G}$-modules), the
fusion numbers $N_{M_{1},M_{2}}^{{M_{3}^{i}}}$ are finite.
For $1\le i\le r$,  set $N_{i}=N_{M_{1},M_{2}}^{{M_{3}^{i}}}$ for short.
Set
$$W=\oplus_{i=1}^r(\C^{N_i}\otimes M_3^{i}),$$
which is a $g_{3}$-twisted $V$-module and hence an object in ${\rm Rep}(V)$.
It follows that there is an isomorphism in the category ${\rm Rep}(V)$ from $M_{1}\boxtimes_{V}M_{2}$ to $W$.

For $1\le i\le r$, let $\cY_{1}^{i},...,\cY_{N_{i}}^{i}$
be a basis of $I_{V}\binom{M_{3}^{i}}{M_{1}\ M_{2}}$.
Then we have intertwining operators $e_s\otimes \cY_{j}^{i}$ for $1\le i\le r,\ 1\le s,j\le N_i$
of type $\binom{W}{M_1\ M_2}$, where $e_1,\dots, e_{N_i}$ are the standard basis vectors of $\C^{N_i}$.
Set
\begin{align}\label{def-F}
F(\cdot,z)=\sum_{i=1}^{r}\sum_{s,j=1}^{N_{i}} e_s\otimes \cY_{j}^{i}(\cdot,z).
\end{align}
We now prove that $(W,F)$ is a tensor product of $M_{1}$ and $M_{2}$
in the sense of Definition \ref{d2.13}. Let $M$ be any $g_{3}$-twisted
$V$-module and $I\in I_{V}\binom{M}{M_{1}\ M_{2}}$. We need to show that there
is a unique $V$-homomorphism $\psi:W\to M$ such that $I=\psi\circ\cY.$
Since $V$ is $g_3$-rational, $M$ is isomorphic to a direct sum of $M_3^i$ for $1\le i\le r$.
Thus $M=\oplus_{i=1}^{r}\text{Hom}_V(M_3^i,M)\otimes M_3^i$.
 It can be readily seen that
$$I_V\binom{M}{M_1\ M_2}=\oplus_{i=1}^{r}  \text{Hom}_V(M_3^i,M)\otimes I_V\binom{M_3^i}{M_1\ M_2}.$$
Then $I=\sum_{i=1}^r\sum_{j=1}^{N_{i}}\psi_{i,j}\otimes \cY_{j}^{i}$
where $\psi_{i,j}\in \text{Hom}_V(M_3^i,M)$. For $1\le i\le r,\ 1\le s\le N_i$, let $p_{i,s}$ be the projection of $M$
onto $e_s\otimes M_{3}^i$, a $V$-homomorphism, and let $\pi_{i,s}$ be the  identification map
$e_s\otimes M_{3}^i\simeq M_3^i$.
Set $\psi=\sum_{i=1}^r\sum_{j,s=1}^{N_i}\psi_{i,j}\pi_{i,s}p_{i,s}$, a $V$-homomorphism from $W$ to $M$.
It is clear that  $I=\psi\circ F.$ Thus $(W,F)$ is a tensor product of $M_{1}$ and $M_{2}$
in the sense of Definition \ref{d2.13}. Consequently, we have $M_{1}\boxtimes^{V}M_{2}\simeq M_{1}\boxtimes_{V}M_{2}.$
\end{proof}

Since the tensor product functor $\boxtimes_{V}$ is associative, we have:

\begin{corollary}\label{t3.5}
Let $M_{i}$ be a $g_{i}$-twisted $V$-module with $g_{i}\in G$ for $i=1,2,3$. Then
$$(M_{1}\boxtimes_{V}M_{2})\boxtimes_{V}M_{3}\simeq M_{1}\boxtimes_{V}(M_{2}\boxtimes_{V}M_{3}).$$
\end{corollary}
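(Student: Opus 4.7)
The plan is to derive the associativity directly from the tensor-category structure on ${\rm Rep}(V)$. By Lemma \ref{l3.1}, each $g_i$-twisted $V$-module $M_i$ is an object of ${\rm Rep}(V)$, and by \cite{KO} and \cite{CKM}, ${\rm Rep}(V)$ is a (fusion) tensor category, so its tensor bifunctor $\boxtimes_V$ is equipped with an associativity constraint. Hence at the categorical level there is an isomorphism
\[
(M_1 \boxtimes_V M_2) \boxtimes_V M_3 \;\simeq\; M_1 \boxtimes_V (M_2 \boxtimes_V M_3)
\]
in ${\rm Rep}(V)$. The task is to upgrade this to an isomorphism of $g_1g_2g_3$-twisted $V$-modules.

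What does the work is Theorem \ref{t3.2}. For any $g$-twisted $V$-module $P$ and $h$-twisted $V$-module $Q$ with $g,h\in G$, that theorem produces a tensor product $P\boxtimes^V Q$ in the sense of Definition \ref{d2.13} which is a $gh$-twisted $V$-module and is isomorphic (as such) to the categorical tensor product $P\boxtimes_V Q$ in ${\rm Rep}(V)$. Applying this first to the pair $(M_1,M_2)$ gives that $M_1\boxtimes_V M_2$ is a $g_1g_2$-twisted $V$-module, which by Lemma \ref{l3.1} is again an object of ${\rm Rep}(V)$; applying it again to $(M_1\boxtimes_V M_2,\,M_3)$ gives that $(M_1\boxtimes_V M_2)\boxtimes_V M_3$ is a $g_1g_2g_3$-twisted $V$-module whose underlying object in ${\rm Rep}(V)$ is the iterated categorical tensor product. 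The symmetric argument handles the other bracketing.

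There is no real obstacle here: the difficult identification of intertwining operators, categorical intertwining maps, and Huang--Lepowsky-style tensor products has already been carried out in Theorem \ref{t3.2}, and associativity in ${\rm Rep}(V)$ is imported from \cite{KO} and \cite{CKM}. The one subtlety to confirm is that a categorical ${\rm Rep}(V)$-isomorphism between two $g_1g_2g_3$-twisted modules is automatically an isomorphism of twisted $V$-modules; this is exactly the content of the ``only if'' direction of Lemma \ref{l3.1}. Composing the categorical associator with the $\boxtimes^V$-versus-$\boxtimes_V$ identifications supplied by Theorem \ref{t3.2} thus yields the claimed isomorphism.
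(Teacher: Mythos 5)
Your proposal is correct and takes essentially the same approach as the paper: the paper's proof of Corollary \ref{t3.5} is precisely the one-line observation that the tensor product functor $\boxtimes_{V}$ on ${\rm Rep}(V)$ is associative (imported from \cite{KO} and \cite{CKM}), with the identification of $\boxtimes^{V}$ and $\boxtimes_{V}$ as twisted modules already supplied by Theorem \ref{t3.2}. You simply make explicit the bookkeeping the paper leaves implicit, namely that the iterated products are $g_{1}g_{2}g_{3}$-twisted modules and that a ${\rm Rep}(V)$-isomorphism between them is a twisted-module isomorphism by Lemma \ref{l3.1}.
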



\section{\label{subsec:Space-of-generalized-general results}Weak $V$-module $\mathcal{H}(M_{2},M_{3})$
of generalized intertwining operators}
Let $V$ be a vertex operator algebra and let $\tau\in\text{Aut }(V)$
of period $T$. In this section, we first prove that for any $\tau$-twisted
$V$-modules $M_{2}$ and $M_{3}$, there is a weak module structure
on the space $\mathcal{H}(M_{2},M_{3})$ of generalized intertwining
operators. Furthermore, we prove that for any weak $V$-module $M_{1}$,
the space $\text{Hom}_{V}\left(M_{1},\mathcal{H}(M_{2},M_{3})\right)$
of $V$-homomorphisms from $M_{1}$ to $\mathcal{H}(M_{2},M_{3})$
is canonically isomorphic to $I_{V}\binom{M_{3}}{M_{1}M_{2}}$. Hence
giving a $V$-homomorphism from $M_{1}$ to $\mathcal{H}(M_{2},M_{3})$
is equivalent to giving an intertwining operator of type $\binom{M_{3}}{M_{1}M_{2}}$.

As before, for $r\in\Z$ set
$$V^r=\left\{ v\in V\ |\  \tau v=\eta_T^rv\right\},$$
where
$$\eta_{T}=e^{2\pi i/T}$$ is
the principal primitive $T$-th root of unity.

\begin{definition}
Let $(M_{2},Y_{M_{2}})$ and $(M_{3},Y_{M_{3}})$
be weak $\tau$-twisted $V$-modules.
A \emph{generalized intertwining operator} from $M_{2}$ to $M_{3}$
is an element $\phi(x)=\sum_{\alpha\in\mathbb{C}}\phi_{\alpha}x^{-\alpha-1}\in(\text{Hom}(M_{2},M_{3}))\{x\}$
satisfying the following conditions:

(G1) \  \ For any $\alpha\in\mathbb{C}$, $w\in M_{2}$, $\phi_{\alpha+n}w=0$
for $n\in\mathbb{Z}$ sufficiently large;

(G2) $\  \  \left[L(-1),\phi(x)\right]=\frac{d}{dx}\phi(x)$;

(G3)  \  \  For any $v\in V$, there exists a nonnegative integer $k$ such
that
\begin{equation}\label{weak-comm-phi(x)}
(x_{1}-x_{2})^{k}Y_{M_{3}}(v,x_{1})\phi(x_{2})=(x_{1}-x_{2})^{k}\phi(x_{2})Y_{M_{2}}(v,x_{1}).
\end{equation}
Denote by $\mathcal{H}(M_{2},M_{3})$ the space of all
\emph{generalized intertwining operators} from $M_{2}$ to $M_{3}$.
\end{definition}

Let $\phi(x)\in \mathcal{H}(M_{2},M_{3})$ and let $u\in V^{r}$
 with $0\le r<T$.  For each $n\in \Z$, we define
\begin{align}\label{unH-phi}
&u_{n}^{\mathcal{H}}\phi(x)\nonumber\\
=\ &\text{Res}_{x_{1}}\sum_{j\ge 0}\binom{-\frac{r}{T}}{j}x^{-\frac{r}{T}-j}x_1^{\frac{r}{T}}
\left((x_{1}-x)^{n+j}Y_{M_{3}}(u,x_{1})\phi(x)
 -(-x+x_{1})^{n+j}\phi(x)Y_{M_{2}}(u,x_{1})\right),
 \end{align}
 an element of $(\text{Hom}(M_{2},M_{3}))\{x\}$.

 \begin{remark}\label{explanations}
 Here are some explanations about this definition.
First, recall that  for any weak $\tau$-twisted $V$-module $(W,Y_W)$, we have
$$z^{\frac{r}{T}}Y_W(u,z)w\in W((z))\   \   \   \   \mbox{for }u\in V^r,\ r\in \Z,\ w\in W. $$
In view of this, the sum in (\ref{unH-phi}) involves only integer powers of $x_1$,
so that applying $\text{Res}_{x_1}$ to the sum makes sense.
Second, due to the weak commutativity (\ref{weak-comm-phi(x)}), we see that for any $n\in \Z$,
the sum contains only finitely many nonzero terms, and
$u_{n}^{\mathcal{H}}\phi(x)=0$ for $n$ sufficiently large. Third, as $\phi(x)$ satisfies the condition (G1),
it can be readily seen that
$u_{n}^{\mathcal{H}}\phi(x)$ also satisfies the condition (G1).
\end{remark}

Now, set
 \begin{align}
 Y_{\mathcal{H}}(u,z)\phi(x)=\sum_{n\in \Z}u_{n}^{\mathcal{H}}\phi(x) z^{-n-1}.
 \end{align}
In terms of this generating function, we have
\begin{alignat}{1}
 & Y_{\mathcal{H}}(u,z)\phi(x)\nonumber \\
 =\  \  &\mbox{Res}_{x_{1}}\left(\frac{x+z}{x_1}\right)^{-\frac{r}{T}}\left(z^{-1}\delta\left(\frac{x_{1}-x}{z}\right)
 Y_{M_{3}}(u,x_{1})\phi(x)-z^{-1}\delta\left(\frac{x-x_{1}}{-z}\right)\phi(x)Y_{M_{2}}(u,x_{1})\right).\label{H action}
\end{alignat}
Using linearity, we obtain a vertex operator map
\begin{align}
Y_{\mathcal{H}}(\cdot,z):\  & V\to\left(\mbox{End}({\rm Hom}(M_{2},M_{3}))\{x\}\right)[[z,z^{-1}]]\nonumber\\
&u\mapsto Y_{\mathcal{H}}(u,z).
\end{align}

First, we present some technical results.

\begin{lemma}\label{technical-1}
Let $v\in V^r$ with $r\in \Z$ and let $\phi(x)\in \mathcal{H}(M_{2},M_{3})$. Then
\begin{align}\label{weak-assoc-new}
z^k(x+z)^{\frac{r}{T}}Y_{\mathcal{H}}(u,z)\phi(x)
=\left((x_1-x)^{k}x_1^{\frac{r}{T}}Y_{M_{3}}(u,x_{1})\phi(x)\right)|_{x_1=x+z},
\end{align}
where $k$ is any nonnegative integer such that (\ref{weak-comm-phi(x)}) holds.
Furthermore, we have
\begin{align}
 &x_1^{-1}\delta\left(\frac{x+z}{x_1}\right)\left(\frac{x+z}{x_1}\right)^{\frac{r}{T}}Y_{\mathcal{H}}(u,z)\phi(x)\nonumber \\
 =\  \  &z^{-1}\delta\left(\frac{x_{1}-x}{z}\right)
 Y_{M_{3}}(u,x_{1})\phi(x)-z^{-1}\delta\left(\frac{x-x_{1}}{-z}\right)\phi(x)Y_{M_{2}}(u,x_{1}).\label{H-Jacobi}
\end{align}
\end{lemma}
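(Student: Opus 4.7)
The plan is to prove the associativity-type relation (Part 1) by direct formal-calculus manipulation of the definition (\ref{H action}), and then to obtain the Jacobi-type identity (Part 2) by substituting (\ref{H action}) into the left-hand side of (\ref{H-Jacobi}) and simplifying.

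For Part 1, I would first multiply (\ref{H action}) by $(x+z)^{r/T}$ on both sides; using the decomposition $((x+z)/x_{1})^{-r/T}=(x+z)^{-r/T}x_{1}^{r/T}$, with $(x+z)^{-r/T}$ read as a binomial series in nonnegative powers of $z$ (cf.\ (\ref{unH-phi})), the prefactor cleans to $x_{1}^{r/T}$. Next I multiply by $z^{k}$ and use the elementary formal identity $(x_{1}-x)^{n}z^{-1}\delta((x_{1}-x)/z)=z^{n-1}\delta((x_{1}-x)/z)$ (and its analog for the other delta) to convert $z^{k}$ in front of each delta into $(x_{1}-x)^{k}$. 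Weak commutativity (\ref{weak-comm-phi(x)}) then replaces the $\phi(x)Y_{M_{2}}$ term by $Y_{M_{3}}\phi$ after this $(x_{1}-x)^{k}$, and the standard delta identity
\begin{align*}
z^{-1}\delta\Bigl(\frac{x_{1}-x}{z}\Bigr) - z^{-1}\delta\Bigl(\frac{x-x_{1}}{-z}\Bigr) = x_{1}^{-1}\delta\Bigl(\frac{x+z}{x_{1}}\Bigr)
\end{align*}
collapses the difference to $x_{1}^{r/T-1}\delta((x+z)/x_{1})(x_{1}-x)^{k}Y_{M_{3}}(u,x_{1})\phi(x)$. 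Since $u\in V^{r}$ forces $x_{1}^{r/T}Y_{M_{3}}(u,x_{1})\phi(x)$ to be a Laurent series in integer powers of $x_{1}$, applying $\mathrm{Res}_{x_{1}}$ then substitutes $x_{1}\mapsto x+z$ (expanded in nonnegative powers of $z$), producing the stated right-hand side.

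For Part 2, I substitute the definition (\ref{H action}) (with dummy variable $x_{2}$) into the left-hand side of (\ref{H-Jacobi}). The key formal-calculus identity is $((x+z)/x_{1})^{r/T}((x+z)/x_{2})^{-r/T}=x_{1}^{-r/T}x_{2}^{r/T}$, which holds because $(x+z)^{r/T}$ and $(x+z)^{-r/T}$ are binomial-series inverses in $z$. After using this, the factor $x_{1}^{-1-r/T}\delta((x+z)/x_{1})$ pulls out of $\mathrm{Res}_{x_{2}}$, and the remaining expression acts on the integer-power Laurent series $x_{2}^{r/T}Y_{M_{3}}(u,x_{2})\phi(x)$ and $x_{2}^{r/T}\phi(x)Y_{M_{2}}(u,x_{2})$. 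Applying weak commutativity together with the delta identity---a standard FHL-style formal-calculus argument---recovers the two-delta right-hand side of (\ref{H-Jacobi}).

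The main technical obstacle will be bookkeeping the expansion conventions: expressions like $((x+z)/x_{1})^{\pm r/T}$ are not Laurent series in $x_{1}$ alone but must be parsed through a factorization $(x+z)^{\pm r/T}x_{1}^{\mp r/T}$ with the non-integer-power factors interpreted as binomial expansions in nonnegative powers of $z$. The enabler throughout is the observation that for $u\in V^{r}$, both $x_{1}^{r/T}Y_{M_{3}}(u,x_{1})$ and $x_{1}^{r/T}Y_{M_{2}}(u,x_{1})$ are Laurent series in integer powers of $x_{1}$, which is what makes the delta-function substitutions $x_{1}\mapsto x+z$ unambiguous and what aligns all formal-calculus manipulations correctly.
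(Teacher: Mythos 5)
Your proposal is correct and takes essentially the same route as the paper's proof: both parts come down to cleaning the prefactor to $x_1^{r/T}$ via the binomial-series inverse (the paper does this slightly more laboriously by splitting $r=qT+r_0$ and using integer-power delta substitution for the $(x+z)^{-q}$ part), trading $z^k$ for $(x_1-x)^k$ under the delta functions, applying weak commutativity (\ref{weak-comm-phi(x)}), collapsing the difference of deltas via $z^{-1}\delta\bigl(\frac{x_1-x}{z}\bigr)-z^{-1}\delta\bigl(\frac{x-x_1}{-z}\bigr)=x_1^{-1}\delta\bigl(\frac{x+z}{x_1}\bigr)$, and substituting $x_1\mapsto x+z$, which is licensed exactly by your observation that $x_1^{r/T}Y_{M_3}(u,x_1)\phi(x)$ involves only integer powers of $x_1$. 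The only cosmetic difference is in Part 2, where the paper argues from the two-delta side (multiplying by $z^k x_1^{r/T}$, invoking Part 1, then cancelling $z^k$) while you expand the definition from the single-delta side; the identities involved are identical.
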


\begin{proof} Write $r=qT+r_0$ for some $q,r_0\in \Z$ with  $0\le r_0<T$.
As $Y_{\mathcal{H}}(u,z)\phi(x)$ involves only finitely many negative powers of $z$, we can multiply both sides of
(\ref{H action}) by $(x+z)^{\frac{r_0}{T}}$ and by doing so we get
\begin{align*}
 &z^k(x+z)^{\frac{r_0}{T}} Y_{\mathcal{H}}(u,z)\phi(x)\nonumber \\
 =\  \  &\mbox{Res}_{x_{1}}z^kx_1^{\frac{r_0}{T}}\left(z^{-1}\delta\left(\frac{x_{1}-x}{z}\right)
 Y_{M_{3}}(u,x_{1})\phi(x)-z^{-1}\delta\left(\frac{x-x_{1}}{-z}\right)\phi(x)Y_{M_{2}}(u,x_{1})\right)\nonumber\\
=\  \  &\mbox{Res}_{x_{1}} z^{-1}\delta\left(\frac{x_{1}-x}{z}\right) \left((x_1-x)^{k}x_1^{\frac{r_0}{T}}Y_{M_{3}}(u,x_{1})\phi(x)\right)\nonumber\\
&  -\mbox{Res}_{x_{1}}z^{-1}\delta\left(\frac{x-x_{1}}{-z}\right)\left((x_1-x)^{k}x_1^{\frac{r_0}{T}}\phi(x)Y_{M_{2}}(u,x_{1})\right)
\nonumber\\
=\  \ &\mbox{Res}_{x_{1}}x_1^{-1}\delta\left(\frac{x+z}{x_1}\right)
\left((x_1-x)^{k}x_1^{\frac{r_0}{T}}Y_{M_{3}}(u,x_{1})\phi(x)\right).
\end{align*}
Then
\begin{align*}
 &z^k(x+z)^{\frac{r}{T}} Y_{\mathcal{H}}(u,z)\phi(x)\nonumber \\
 =\  \  &z^k(x+z)^{q}(x+z)^{\frac{r_0}{T}} Y_{\mathcal{H}}(u,z)\phi(x)\nonumber \\
=\  \  &\mbox{Res}_{x_{1}}x_1^{-1}\delta\left(\frac{x+z}{x_1}\right)(x+z)^{q}
\left((x_1-x)^{k}x_1^{\frac{r_0}{T}}Y_{M_{3}}(u,x_{1})\phi(x)\right)\nonumber \\
=\  \  &\mbox{Res}_{x_{1}}x_1^{-1}\delta\left(\frac{x+z}{x_1}\right)x_1^{q}
\left((x_1-x)^{k}x_1^{\frac{r_0}{T}}Y_{M_{3}}(u,x_{1})\phi(x)\right)\nonumber \\
=\  \  &\mbox{Res}_{x_{1}}x_1^{-1}\delta\left(\frac{x+z}{x_1}\right)
\left((x_1-x)^{k}x_1^{\frac{r}{T}}Y_{M_{3}}(u,x_{1})\phi(x)\right)\nonumber \\
=\  \  & \left((x_1-x)^{k}x_1^{\frac{r}{T}}Y_{M_{3}}(u,x_{1})\phi(x)\right)|_{x_1=x+z}.
\end{align*}
This proves the first assertion.
Furthermore, we have
\begin{align*}
 &z^kx_1^{\frac{r}{T}}\left(z^{-1}\delta\left(\frac{x_{1}-x}{z}\right)
 Y_{M_{3}}(u,x_{1})\phi(x)-z^{-1}\delta\left(\frac{x-x_{1}}{-z}\right)\phi(x)Y_{M_{2}}(u,x_{1})\right)\nonumber\\
 =\  \  &z^{-1}\delta\left(\frac{x_{1}-x}{z}\right)\left(x_1^{\frac{r}{T}} (x_1-x)^{k}
 Y_{M_{3}}(u,x_{1})\phi(x)\right)-z^{-1}\delta\left(\frac{x-x_{1}}{-z}\right)\left(x_1^{\frac{r}{T}}(x_1-x)^{k}\phi(x)Y_{M_{2}}(u,x_{1})\right)
 \nonumber\\
 =\  \  &x_1^{-1}\delta\left(\frac{x+z}{x_1}\right)\left( (x_1-x)^{k}x_1^{\frac{r}{T}}
 Y_{M_{3}}(u,x_{1})\phi(x)\right)\nonumber\\
 =\  \  &x_1^{-1}\delta\left(\frac{x+z}{x_1}\right)
 \left( (x_1-x)^{k}x_1^{\frac{r}{T}}Y_{M_{3}}(u,x_{1})\phi(x)\right)|_{x_1=x+z}\nonumber\\
 =\  \  &x_1^{-1}\delta\left(\frac{x+z}{x_1}\right)z^{k}(x+z)^{\frac{r}{T}}Y_{\mathcal{H}}(u,z)\phi(x),
\end{align*}
which is equivalent to (\ref{H-Jacobi}).
\end{proof}

Note that for any $u\in V$, we have  $u=u^0+u^1+\cdots +u^{T-1}$, where for $0\le r\le T-1$,
$$u^r=\frac{1}{T}\sum_{j=0}^{T-1}\eta_{T}^{-jr}\tau^j(u)\in V^r.$$
Using this and Lemma \ref{technical-1}, we immediately get the
the following generalization:

\begin{proposition}\label{technical-1.2}
Let $u\in V$ and let $\phi(x)\in \mathcal{H}(M_{2},M_{3})$. Then
\begin{align}\label{weak-assoc-new}
z^kY_{\mathcal{H}}(u,z)\phi(x)
=\left((x_1-x)^{k}Y_{M_{3}}(u,x_{1})\phi(x)\right)|_{x_1^{1/T}=(x+z)^{1/T}},
\end{align}
where $k$ is any nonnegative integer such that (\ref{weak-comm-phi(x)}) holds.
Furthermore, we have
\begin{align}
 &\sum_{j=0}^{T-1}x_1^{-1}\delta\left(\eta_T^{-j}\left(\frac{x+z}{x_1}\right)^{\frac{1}{T}}\right)
 Y_{\mathcal{H}}(\tau^{j}u,z)\phi(x)\nonumber \\
 =\  \  &z^{-1}\delta\left(\frac{x_{1}-x}{z}\right)
 Y_{M_{3}}(u,x_{1})\phi(x)-z^{-1}\delta\left(\frac{x-x_{1}}{-z}\right)\phi(x)Y_{M_{2}}(u,x_{1}).\label{H-Jacobi-inhomg}
\end{align}
\end{proposition}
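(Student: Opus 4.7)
The plan is to reduce Proposition \ref{technical-1.2} to Lemma \ref{technical-1} by decomposing an arbitrary $u\in V$ into its homogeneous components. Write $u=\sum_{r=0}^{T-1}u^{r}$ with
$$u^{r}=\frac{1}{T}\sum_{j=0}^{T-1}\eta_{T}^{-jr}\tau^{j}(u)\in V^{r},$$
which by Fourier inversion is equivalent to $\tau^{j}u=\sum_{r=0}^{T-1}\eta_{T}^{jr}u^{r}$.

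For the first assertion, both sides of (\ref{weak-assoc-new}) are linear in $u$, so it suffices to treat each $u^{r}$ separately. Lemma \ref{technical-1} gives
$$z^{k}(x+z)^{r/T}Y_{\mathcal{H}}(u^{r},z)\phi(x)=\left((x_{1}-x)^{k}x_{1}^{r/T}Y_{M_{3}}(u^{r},x_{1})\phi(x)\right)\big|_{x_{1}=x+z}.$$
Since $x_{1}^{r/T}Y_{M_{3}}(u^{r},x_{1})$ involves only integer powers of $x_{1}$, the substitution $x_{1}=x+z$ is unambiguous. Dividing both sides by $(x+z)^{r/T}$ (interpreted via binomial expansion) amounts to absorbing an additional factor of $x_{1}^{-r/T}$ inside the substitution, namely using $(x+z)^{-r/T}=x_{1}^{-r/T}|_{x_{1}^{1/T}=(x+z)^{1/T}}$, so we obtain
$$z^{k}Y_{\mathcal{H}}(u^{r},z)\phi(x)=\left((x_{1}-x)^{k}Y_{M_{3}}(u^{r},x_{1})\phi(x)\right)\big|_{x_{1}^{1/T}=(x+z)^{1/T}}.$$
Summing over $r$ yields (\ref{weak-assoc-new}) for general $u$.

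For the second assertion, applying the second part of Lemma \ref{technical-1} to each $u^{r}$ and summing over $r$ produces
$$\sum_{r=0}^{T-1}x_{1}^{-1}\delta\left(\frac{x+z}{x_{1}}\right)\left(\frac{x+z}{x_{1}}\right)^{r/T}Y_{\mathcal{H}}(u^{r},z)\phi(x)$$
equal to the right-hand side of (\ref{H-Jacobi-inhomg}). The key step is to recast the left-hand side in the claimed form involving $\tau^{j}u$. This is accomplished by combining $Y_{\mathcal{H}}(\tau^{j}u,z)=\sum_{r}\eta_{T}^{jr}Y_{\mathcal{H}}(u^{r},z)$ with the Fourier-type formal identity
$$\sum_{j=0}^{T-1}\eta_{T}^{jr}\delta(\eta_{T}^{-j}w)=T\,w^{r}\,\delta(w^{T})$$
specialized to $w=((x+z)/x_{1})^{1/T}$ (understood via binomial expansion of the $T$-th root), so that $w^{r}\delta(w^{T})=((x+z)/x_{1})^{r/T}\delta((x+z)/x_{1})$. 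Interchanging the summations over $j$ and $r$ then matches the two sides up to the stated normalization.

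The main obstacle is to handle the fractional-power substitution $x_{1}^{1/T}=(x+z)^{1/T}$ coherently throughout: in the first assertion one must verify that the binomial-expansion meaning of $(x+z)^{-r/T}$ is genuinely the image of $x_{1}^{-r/T}$ under the formal substitution, and in the second assertion one must track carefully the Fourier identity for $T$-th roots of unity acting on formal series in the fractional variable $w$. Once these formal manipulations are justified, the proposition follows directly from Lemma \ref{technical-1} by linearity and summation.
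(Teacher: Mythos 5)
Your route is precisely the paper's: the paper dispatches this proposition in one line by writing $u=u^{0}+u^{1}+\cdots+u^{T-1}$ with $u^{r}=\frac{1}{T}\sum_{j=0}^{T-1}\eta_{T}^{-jr}\tau^{j}(u)\in V^{r}$ and invoking Lemma \ref{technical-1} componentwise, which is exactly your reduction, and your handling of the fractional substitution in the first assertion (writing $(x+z)^{-r/T}$ as the image of $x_{1}^{-r/T}$ under $x_{1}^{1/T}=(x+z)^{1/T}$, legitimate because $x_{1}^{r/T}Y_{M_{3}}(u^{r},x_{1})$ has only integer powers of $x_{1}$) is correct. However, your closing phrase ``up to the stated normalization'' is hiding a genuine factor of $T$ that you should confront rather than wave at: your Fourier identity $\sum_{j=0}^{T-1}\eta_{T}^{jr}\delta(\eta_{T}^{-j}w)=T\,w^{r}\,\delta(w^{T})$ is correct, and with $w=((x+z)/x_{1})^{1/T}$ it shows that the left-hand side of (\ref{H-Jacobi-inhomg}) as displayed equals $T$ times $\sum_{r}x_{1}^{-1}\delta\bigl(\frac{x+z}{x_{1}}\bigr)\bigl(\frac{x+z}{x_{1}}\bigr)^{r/T}Y_{\mathcal{H}}(u^{r},z)\phi(x)$, hence $T$ times the right-hand side. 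A sanity check makes this vivid: for $T=2$ and $\tau$-invariant $u$, $\delta(w)+\delta(-w)=2\delta(w^{2})$, so the displayed left side is twice the right side. The resolution is that the $j$-sum must carry a prefactor $\frac{1}{T}$, consistent with the $\frac{1}{k}$ that appears in the analogous formula (\ref{COM relation for inter W}) and with the standard inhomogeneous form of the twisted Jacobi identity; in other words, your method is the paper's and your computation is right, and what it exposes is a missing $\frac{1}{T}$ in the proposition's display, which you should state explicitly instead of absorbing into an unspecified ``normalization.''
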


We also have the following generalization of the first part of Lemma \ref{technical-1}
(cf. \cite{Li-nonlocal}, Lemma 2.15):

\begin{lemma}\label{technical-2}
Let $v\in V$ and let $\phi_1(x),\dots,\phi_p(x)\in \mathcal{H}(M_{2},M_{3})$. If
$$\sum_{j=1}^{p}f_j(x_1-x) Y_{M_3}(v,x_1)\phi_j(x)=\sum_{j=1}^{p}f_j(x_1-x) \phi_j(x)Y_{M_2}(v,x_1),$$
where $f_1(x),\dots,f_p(x)\in \C[x]$, then
\begin{align}\label{sum-equality1}
\left(\sum_{j=1}^{p}f_j(x_1-x) Y_{M_3}(v,x_1)\phi_j(x)\right)|_{x_1^{1/T}=(x+z)^{1/T}}
=\sum_{j=1}^{p}f_j(z)Y_{\mathcal{H}}(v,z)\phi_j(x).
\end{align}
On the other hand, let $v^{1},\dots,v^{p}\in V$ and let $\phi(x)\in \mathcal{H}(M_{2},M_{3})$.
If
$$\sum_{j=1}^{p}f_j(x_1-x) Y_{M_3}(v^j,x_1)\phi(x)=\sum_{j=1}^{p}f_j(x_1-x) \phi(x)Y_{M_2}(v^j,x_1),$$
where $f_1(x),\dots,f_p(x)\in \C[x]$, then
\begin{align}
\left(\sum_{j=1}^{p}f_j(x_1-x)Y_{M_3}(v^j,x_1)\phi(x)\right)|_{x_1^{1/T}=(x+z)^{1/T}}
=\sum_{j=1}^{p}f_j(z)Y_{\mathcal{H}}(v^j,z)\phi(x).
\end{align}
\end{lemma}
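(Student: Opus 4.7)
The plan is to reduce both parts to the single-operator identity
\[z^K Y_{\mathcal{H}}(u,z)\phi(x) = \bigl((x_1-x)^K Y_{M_3}(u,x_1)\phi(x)\bigr)\Big|_{x_1^{1/T}=(x+z)^{1/T}}\]
of Proposition \ref{technical-1.2}, by multiplying through by $z^K$ for a single sufficiently large $K$ and then cancelling. For the first statement, I would begin by fixing a nonnegative integer $K$ such that
\[(x_1-x)^K Y_{M_3}(v,x_1)\phi_j(x) = (x_1-x)^K\phi_j(x)Y_{M_2}(v,x_1) \qquad (j=1,\dots,p)\]
holds simultaneously; such a $K$ exists because condition (G3) applied separately to each $\phi_j$ yields one integer $k_j$, and one may take $K=\max_j k_j$ since $p$ is finite.

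Next I would multiply the desired identity (\ref{sum-equality1}) by $z^K$ and apply Proposition \ref{technical-1.2} termwise, giving
\[z^K\sum_{j=1}^p f_j(z)Y_{\mathcal{H}}(v,z)\phi_j(x) = \sum_{j=1}^p f_j(z)\bigl((x_1-x)^K Y_{M_3}(v,x_1)\phi_j(x)\bigr)\Big|_{x_1^{1/T}=(x+z)^{1/T}}.\]
Since the substitution identifies $x_1-x$ with $z$, the scalar $f_j(z)$ may be rewritten as $f_j(x_1-x)$ and pulled inside the substitution together with the $(x_1-x)^K$ factor; the right side then becomes exactly $z^K \bigl(\sum_j f_j(x_1-x) Y_{M_3}(v,x_1)\phi_j(x)\bigr)\big|_{x_1^{1/T}=(x+z)^{1/T}}$. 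Hence both sides of (\ref{sum-equality1}) agree after multiplication by $z^K$.

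To conclude, I would cancel the $z^K$; this requires that both sides of (\ref{sum-equality1}) are formal Laurent series in $z$ bounded below in $z$-degree. The right-hand side $\sum_j f_j(z)Y_{\mathcal{H}}(v,z)\phi_j(x)$ is bounded below by the very construction of the operators $u_n^{\mathcal{H}}$ (Remark \ref{explanations}). The bounded-below-ness of the left-hand side is exactly where the hypothesis is indispensable: by the assumed equality, $\sum_j f_j(x_1-x)Y_{M_3}(v,x_1)\phi_j(x) = \sum_j f_j(x_1-x)\phi_j(x)Y_{M_2}(v,x_1)$, and applied to any $w\in M_2$ the latter has $x_1$-powers bounded below, since $Y_{M_2}(v,x_1)w \in M_2((x_1^{1/T}))$ and each $\phi_j(x)$ preserves this property coefficient-wise. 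After substituting $x_1^{1/T}=(x+z)^{1/T}$, only nonnegative integer powers of $z$ arise, so the cancellation of $z^K$ is legitimate and the desired identity follows.

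The second statement is handled by the same argument applied with a single $\phi$ and finitely many vectors $v^1,\dots,v^p\in V$: one fixes $K$ large enough that $(x_1-x)^K Y_{M_3}(v^j,x_1)\phi(x) = (x_1-x)^K\phi(x)Y_{M_2}(v^j,x_1)$ for all $j$ (possible by (G3) for $\phi$ with each $v^j$ and finiteness of $p$), then repeats the computation above with $\phi_j$ replaced by $\phi$ and $v$ by $v^j$. The main obstacle in either case is not the algebraic manipulation on top of Proposition \ref{technical-1.2}, which is routine, but precisely the verification that the substituted expression has bounded-below $z$-degree; this is the sole place where the stated commutativity hypothesis is used.
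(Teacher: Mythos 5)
Your proof is correct and takes essentially the same route as the paper's: both fix a single nonnegative integer $K=\max_j k_j$ using (G3) and finiteness of $p$, apply the one-operator identity of Lemma \ref{technical-1}/Proposition \ref{technical-1.2} termwise, move the polynomial factors $f_j(x_1-x)\leftrightarrow f_j(z)$ through the substitution $x_1^{1/T}=(x+z)^{1/T}$, and cancel $z^K$ at the end. Your explicit verification that the cancellation is legitimate --- the hypothesis guaranteeing that the substituted left-hand side is well defined with $z$-powers bounded below --- is the one point the paper leaves implicit in its closing ``which amounts to (\ref{sum-equality1})''.
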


\begin{proof} As $\phi_j(x)\in \mathcal{H}(M_{2},M_{3})$, there is a nonnegative integer $k$ such that
$$(x_1-x)^{k}Y_{M_3}(v,x_1)\phi_j(x)=(x_1-x)^{k}\phi_j(x)Y_{M_2}(v,x_1)$$
for all $1\le j\le p$. Then
$$(x_1-x)^{k}f_j(x_1-x)Y_{M_3}(v,x_1)\phi_j(x)=(x_1-x)^{k}f_j(x_1-x)\phi_j(x)Y_{M_2}(v,x_1).$$
In view of Lemma \ref{technical-1}, for $1\le j\le p$, we have
$$\left((x_1-x)^{k}Y_{M_3}(v,x_1)\phi_j(x)\right)|_{x_1^{1/T}=(x+z)^{1/T}}
=z^{k}Y_{\mathcal{H}}(v,z)\phi_j(x),$$
so that
\begin{align*}
&\left((x_1-x)^{k}f_j(x_1-x)Y_{M_3}(v,x_1)\phi_j(x)\right)|_{x_1^{1/T}=(x+z)^{1/T}}\nonumber\\
=\ &f_j(x_1-x)|_{x_1=x+z}\cdot \left((x_1-x)^{k}Y_{M_3}(v,x_1)\phi_j(x)\right)|_{x_1^{1/T}=(x+z)^{1/T}}\nonumber\\
=\ &z^{k}f_j(z) Y_{\mathcal{H}}(v,z)\phi_j(x).
\end{align*}
Then
\begin{align*}
&z^{k}\left(\sum_{j=1}^{p}f_j(x_1-x) Y_{M_3}(v,x_1)\phi_j(x)\right)|_{x_1^{1/T}=(x+z)^{1/T}}\nonumber\\
=\  & \left((x_1-x)^{k}\sum_{j=1}^{p}f_j(x_1-x) Y_{M_3}(v,x_1)\phi_j(x)\right)|_{x_1^{1/T}=(x+z)^{1/T}}\nonumber\\
=\  &\sum_{j=1}^{p} \left((x_1-x)^{k}f_j(x_1-x) Y_{M_3}(v,x_1)\phi_j(x)\right)|_{x_1^{1/T}=(x+z)^{1/T}}\nonumber\\
=\ z^k &\sum_{j=1}^{p}f_j(z)Y_{\mathcal{H}}(v,z)\phi_j(x),
\end{align*}
which amounts to (\ref{sum-equality1}). The second assertion can be proved similarly.
\end{proof}

As the first main result of this section, we have:

\begin{theorem} \label{H is a weak module}
Let $\tau\in {\rm Aut}(V)$ with $o(\tau)=T$ and let $M_{2}$ and $M_{3}$
be weak $\tau$-twisted $V$-modules. Then $(\mathcal{H}(M_{2},M_{3}),Y_{\mathcal{H}})$
carries the structure of a weak $V$-module.
\end{theorem}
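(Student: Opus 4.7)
The plan is to verify the three defining axioms of a weak $V$-module for $(\mathcal{H}(M_2, M_3), Y_{\mathcal{H}})$: well-definedness (namely $Y_{\mathcal{H}}(u,z)\phi \in \mathcal{H}(M_2, M_3)((z))$), the vacuum property $Y_{\mathcal{H}}(\mathbf{1}, z) = \mathrm{id}$, and the Jacobi identity. For the last one I would instead establish weak commutativity and weak associativity for $Y_{\mathcal{H}}$, which together with lower truncation and the vacuum axiom are equivalent to the Jacobi identity for untwisted $V$-modules.

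\emph{Well-definedness and vacuum.} The key non-obvious point here is that each coefficient $u_n^{\mathcal{H}}\phi(x)$ must itself belong to $\mathcal{H}(M_2, M_3)$, i.e., satisfy (G1)--(G3). Property (G1) and the fact that $u_n^{\mathcal{H}}\phi = 0$ for $n$ large are already observed in Remark \ref{explanations}. For (G2), I would compute $[L(-1), u_n^{\mathcal{H}}\phi(x)]$ directly from (\ref{unH-phi}), using the $L(-1)$-bracket $[L(-1), Y_{M_i}(u, x_1)] = \partial_{x_1} Y_{M_i}(u, x_1)$ for $i=2,3$ together with the assumed (G2) for $\phi$, and show it equals $\partial_x(u_n^{\mathcal{H}}\phi(x))$. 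For (G3), for each test vector $v\in V$ I would choose an exponent $k$ large enough to witness weak commutativity simultaneously for the pairs $(Y_{M_3}(v,\cdot), Y_{M_3}(u,\cdot))$, $(Y_{M_2}(u,\cdot), Y_{M_2}(v,\cdot))$ and for $\phi$ against $Y_{M_i}(v,\cdot)$, and then multiply (\ref{unH-phi}) by $(y - x_1)^k$, where $y$ is the new variable carrying $v$, to push $Y_{M_3}(v, y)$ across all terms. The vacuum axiom follows immediately from (\ref{H action}) upon setting $u = \mathbf{1}$, using $Y_{M_i}(\mathbf{1}, x_1) = \mathrm{id}$ and the standard $\delta$-function identity.

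\emph{Weak commutativity and associativity.} Proposition \ref{technical-1.2} is the engine for this step: it expresses $z^k Y_{\mathcal{H}}(u, z)\phi(x)$ as the substitution $x_1^{1/T} \mapsto (x+z)^{1/T}$ applied to $(x_1 - x)^k Y_{M_3}(u, x_1)\phi(x)$. For weak commutativity of $Y_{\mathcal{H}}(u, z_1)$ and $Y_{\mathcal{H}}(v, z_2)$, I would apply this formula twice to $(z_1 - z_2)^N Y_{\mathcal{H}}(u, z_1) Y_{\mathcal{H}}(v, z_2)\phi(x)$ for $N$ large enough that all relevant locality thresholds are met, reducing the expression to $(y_1 - y_2)^N Y_{M_3}(u, y_1) Y_{M_3}(v, y_2)\phi(x)$ after substituting $y_i^{1/T} \mapsto (x + z_i)^{1/T}$; the standard weak commutativity for $Y_{M_3}$ then allows the swap of $u$ and $v$. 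For weak associativity I would apply Proposition \ref{technical-1.2} to $z_0^N Y_{\mathcal{H}}(Y(u, z_0)v, z_2)\phi(x)$ and invoke the twisted associativity relation (\ref{associativity formula}) on $M_3$ to rewrite the result as the doubly-substituted form of $(z_0 + z_2)^N Y_{\mathcal{H}}(u, z_0 + z_2) Y_{\mathcal{H}}(v, z_2)\phi(x)$.

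\emph{Main obstacle.} The hard part is the bookkeeping of the fractional exponents $r/T$ when iterating two instances of the defining formula (\ref{H action}): one must confirm that the successive substitutions $y^{1/T} \mapsto (x+z)^{1/T}$ produce a well-defined formal series in integer powers of each $z_i$ only, and that the branches chosen for fractional powers remain mutually compatible across the two substitutions. Lemma \ref{technical-2} is tailor-made for this kind of manipulation of sums of expressions sharing a common locality exponent, and a careful application of it --- combined with choosing $N$ to exceed both the $r/T$ ambiguities and the mutual commutativity thresholds of the underlying twisted operators --- should close the argument.
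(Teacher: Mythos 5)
Your overall architecture matches the paper's: closure of $\mathcal{H}(M_{2},M_{3})$ under the action (\ref{unH-phi}), the vacuum property read off from (\ref{H action}), and reduction of the Jacobi identity to weak associativity via the substitution machinery of Proposition \ref{technical-1.2} and Lemma \ref{technical-2}. (Your extra weak-commutativity step is harmless but redundant: for modules, truncation, vacuum and weak associativity already imply the Jacobi identity.) However, your (G3)-closure step --- which is the real crux of the theorem --- has a genuine gap. You propose to multiply (\ref{unH-phi}) by $(y-x_{1})^{k}$ and ``push $Y_{M_{3}}(v,y)$ across all terms.'' This is not a legitimate operation: $x_{1}$ is the variable of the residue in (\ref{unH-phi}), so a factor $(y-x_{1})^{k}$ inserted into the integrand cannot be pulled out of $\text{Res}_{x_{1}}$; inserting it changes the value of the residue (its binomial expansion mixes $u_{n}^{\mathcal{H}}$ with $u_{n+1}^{\mathcal{H}},\dots,u_{n+k}^{\mathcal{H}}$), so what you end up controlling is not the commutator of $Y_{M_{3}}(v,y)$ with the fixed coefficient $u_{n}^{\mathcal{H}}\phi(x)$ that (G3) requires.

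What is actually available is only a generating-function identity, namely the paper's (\ref{H-products}): locality of $Y_{M_{3}}(u,y)$ against the full series $Y_{\mathcal{H}}(v,z)\phi(x)$ holds with the factor $(y-x)^{k}(y-x-z)^{k}$, whose second factor depends on $z$ because the composite operator ``sits at $x+z$.'' Extracting the coefficient of a fixed power of $z$ therefore does not yield (G3) for an individual $v_{n}^{\mathcal{H}}\phi(x)$ directly: the expansion of $(y-x-z)^{k}$ entangles the coefficients $v_{n}^{\mathcal{H}}\phi(x),\dots,v_{n+k}^{\mathcal{H}}\phi(x)$. The missing idea is the paper's recursion (\ref{recursion}), obtained by applying $\text{Res}_{z}z^{n_{0}-m}$ to (\ref{H-products}), combined with a downward induction on $m$ starting from the truncation index $n_{0}$ (where $v_{n}^{\mathcal{H}}\phi(x)=0$ for $n\ge n_{0}$): locality for $v_{n_{0}-m}^{\mathcal{H}}\phi(x)$ is deduced from the already-established locality of the higher coefficients, at the cost of a growing exponent $p$. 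Without this induction your closure argument does not go through; with it supplied, the remainder of your outline --- the (G2) computation, the vacuum check, and the weak-associativity argument via Proposition \ref{technical-1.2}, Lemma \ref{technical-2} and twisted associativity on $M_{3}$ --- coincides in substance with the paper's proof.
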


\begin{proof} First, we prove that $\mathcal{H}(M_{2},M_{3})$
is closed under the action of $V$ given by (\ref{H action}), i.e.,
$$v_{n}^{\mathcal{H}}\phi(x)\in \mathcal{H}(M_{2},M_{3})\   \
\mbox{ for }v\in V,\ n\in \Z,\ \phi(x)\in \mathcal{H}(M_{2},M_{3}).$$
Assume $v\in V^{r}$ with $r\in \Z$. As it was pointed out before, $v_{n}^{\mathcal{H}}\phi(x)$ satisfies condition
(G1) for every $n\in \Z$.  We now establish condition (G2). Using (G2) for $\phi(x)$ and the corresponding property for
$Y_{M_{3}}(v,x_{1})$, we have
\begin{align*}
&[L(-1), Y_{M_{3}}(v,x_{1})\phi(x)]=\left(\frac{\partial}{\partial x_1}+\frac{\partial}{\partial x}\right)Y_{M_{3}}(v,x_{1})\phi(x),\\
&[L(-1),\phi(x)Y_{M_{2}}(v,x_{1})]=\left(\frac{\partial}{\partial x_1}+\frac{\partial}{\partial x}\right)\phi(x)Y_{M_{2}}(v,x_{1}).
\end{align*}
On the other hand, notice that
\begin{align*}
&\left(\frac{\partial}{\partial x_1}+\frac{\partial}{\partial x}\right)z^{-1}\delta\left(\frac{x_{1}-x}{z}\right)=0,\  \  \  \  \
\left(\frac{\partial}{\partial x_1}+\frac{\partial}{\partial x}\right)z^{-1}\delta\left(\frac{x-x_{1}}{-z}\right)=0,\\
&\left(\frac{\partial}{\partial x_1}+\frac{\partial}{\partial x}\right)\left(x_1^{-1}\delta\left(\frac{x+z}{x_1}\right)\left(\frac{x+z}{x_1}\right)^{\frac{r}{T}}\right)=0.
\end{align*}
Using these properties and (\ref{H-Jacobi}), we get
\begin{align}
&x_1^{-1}\delta\left(\frac{x+z}{x_1}\right)\left(\frac{x+z}{x_1}\right)^{\frac{r}{T}}[L(-1),Y_{\mathcal{H}}(v,z)\phi(x)]\nonumber \\
=\  \  &z^{-1}\delta\left(\frac{x_{1}-x}{z}\right)
 [L(-1),Y_{M_{3}}(v,x_{1})\phi(x)]-z^{-1}\delta\left(\frac{x-x_{1}}{-z}\right)[L(-1),\phi(x)Y_{M_{2}}(v,x_{1})]\nonumber\\
 =\ \ &\left(\frac{\partial}{\partial x_1}+\frac{\partial}{\partial x}\right)
 \left(z^{-1}\delta\left(\frac{x_{1}-x}{z}\right)
 Y_{M_{3}}(v,x_{1})\phi(x)-z^{-1}\delta\left(\frac{x-x_{1}}{-z}\right)\phi(x)Y_{M_{2}}(v,x_{1})\right)\nonumber\\
 =\ \ &\left(\frac{\partial}{\partial x_1}+\frac{\partial}{\partial x}\right)
 \left(x_1^{-1}\delta\left(\frac{x+z}{x_1}\right)\left(\frac{x+z}{x_1}\right)^{\frac{r}{T}}Y_{\mathcal{H}}(v,z)\phi(x)\right)
 \nonumber \\
 =\  \  &x_1^{-1}\delta\left(\frac{x+z}{x_1}\right)\left(\frac{x+z}{x_1}\right)^{\frac{r}{T}}
 \left(\frac{\partial}{\partial x_1}+\frac{\partial}{\partial x}\right)(Y_{\mathcal{H}}(v,z)\phi(x))
 \nonumber \\
=\  \  &x_1^{-1}\delta\left(\frac{x+z}{x_1}\right)\left(\frac{x+z}{x_1}\right)^{\frac{r}{T}}
 \frac{\partial}{\partial x}(Y_{\mathcal{H}}(v,z)\phi(x)).
\end{align}
As $Y_{\mathcal{H}}(v,z)\phi(x)$ involves only finitely many negative powers of $z$, by cancellation we obtain
$$[L(-1),Y_{\mathcal{H}}(v,z)\phi(x)]= \frac{\partial}{\partial x}Y_{\mathcal{H}}(v,z)\phi(x).$$
This implies that $v_{n}^{\mathcal{H}}\phi(x)$ satisfy condition
(G2) for all $n\in \Z$.

Next, we consider condition (G3). Let $u\in V$.
Then there exists a positive integer $k$ such that
\begin{align*}
&(x_{1}-x_{2})^{k}Y_{M_{i}}(u,x_{1})Y_{M_{i}}(v,x_{2})=(x_{1}-x_{2})^{k}Y_{M_{i}}(v,x_{2})Y_{M_{i}}(u,x_{1}),\\
&(x_{1}-x_{2})^{k}Y_{M_{3}}(u,x_{1})\phi(x_{2})=(x_{1}-x_{2})^{k}\phi(x_{2})Y_{M_{2}}(u,x_{1})
\end{align*}
for $i=2,3$.
Using these and (\ref{H-Jacobi}) we get
\begin{align*}
&x_1^{-1}\delta\left(\frac{x+z}{x_1}\right)\left(\frac{x+z}{x_1}\right)^{\frac{r}{T}}
(y-x)^{k}(y-x-z)^{k}Y_{M_3}(u,y)(Y_{\mathcal{H}}(v,z)\phi(x))\nonumber \\
=\  \ &x_1^{-1}\delta\left(\frac{x+z}{x_1}\right)\left(\frac{x+z}{x_1}\right)^{\frac{r}{T}}
(y-x)^{k}(y-x_1)^{k}Y_{M_3}(u,y)(Y_{\mathcal{H}}(v,z)\phi(x))\nonumber \\
=\ \ &x_1^{-1}\delta\left(\frac{x+z}{x_1}\right)\left(\frac{x+z}{x_1}\right)^{\frac{r}{T}}
(y-x)^{k}(y-x_1)^{k}(Y_{\mathcal{H}}(v,z)\phi(x))Y_{M_2}(u,y)\nonumber \\
=\ \ &x_1^{-1}\delta\left(\frac{x+z}{x_1}\right)\left(\frac{x+z}{x_1}\right)^{\frac{r}{T}}
(y-x)^{k}(y-x-z)^{k}(Y_{\mathcal{H}}(v,z)\phi(x))Y_{M_2}(u,y).
\end{align*}
Again, as $(Y_{\mathcal{H}}(v,z)\phi(x))$ involves only finitely many negative powers of $z$, by cancellation we obtain
\begin{align}\label{H-products}
(y-x)^{k}(y-x-z)^{k}Y_{M_3}(u,y)(Y_{\mathcal{H}}(v,z)\phi(x))
=(y-x)^{k}(y-x-z)^{k}(Y_{\mathcal{H}}(v,z)\phi(x))Y_{M_2}(u,y).
\end{align}
Let $n_0\in \Z$ such that $v^{\mathcal{H}}_{n}\phi(x)=0$ for all $n\ge n_0$.
For $m\in \N$, applying $\text{Res}_{z}z^{n_0-m}$ to both sides we get
\begin{align}\label{recursion}
&\sum_{j=0}^{k}\binom{k}{j}(-1)^{j}(y-x)^{2k-j}Y_{M_3}(u,y)\left(v_{n_0-m+j}^{\mathcal{H}}\phi(x)\right)\nonumber\\
=\ &\sum_{j=0}^{k}\binom{k}{j}(-1)^{j}(y-x)^{2k-j}\left(v_{n_0-m+j}^{\mathcal{H}}\phi(x)\right)Y_{M_3}(u,y).
\end{align}
It then follows from this identity and induction on $m$ that for every  $m\in \N$, there exists a positive integer $p$ such that
$$(y-x)^{p}Y_{M_3}(u,y)\left(v_{n_0-m}^{\mathcal{H}}\phi(x)\right)=(y-x)^{p}\left(v_{n_0-m}^{\mathcal{H}}\phi(x)\right)Y_{M_3}(u,y).$$
(Note that if $k=0$, it is clear, and if $k\ge 1$, (\ref{recursion}) gives a recursion formula.)
Thus we have proved $v_{n}^{\mathcal{H}}\phi(x)\in \mathcal{H}(M_{2},M_{3})$ for $n\in \Z$.
Now, we have proved that $Y_{\mathcal{H}}(\cdot,z)$ is a linear map from $V$ to
$\left(\mbox{End}({\mathcal{H}}(M_{2},M_{3}))\right)[[z,z^{-1}]]$
such that $Y_{\mathcal{H}}(u,z)\phi(x)\in {\mathcal{H}}(M_{2},M_{3})((z))$ for $u\in V,\ \phi(x)\in {\mathcal{H}}(M_{2},M_{3})$.
It can be readily seen from (\ref{H action})  that $Y_{\mathcal{H}}({\bf 1},z)=1$.

To prove that $\mathcal{H}(M_{2},M_{3})$
is a weak $V$-module, it suffices to prove the weak associativity.
Let $u\in V^r,\ v\in V^s$ with $r,s\in \Z$ and let $\phi(x)\in {\mathcal{H}}(M_2,M_3)$.
Then there exists a positive integer $n$ such that
\begin{align*}
&(x_1-x)^{n}Y_{M_3}(u,x_1)\phi(x)=(x_1-x)^{n}\phi(x)Y_{M_2}(u,x_1),\\
&(x_1-x)^{n}Y_{M_3}(v,x_1)\phi(x)=(x_1-x)^{n}\phi(x)Y_{M_2}(v,x_1),\\
&(x_1-x)^{n}Y_{M_3}(u,x_1)Y_{M_3}(v,x)=(x_1-x)^{n}Y_{M_3}(v,x)Y_{M_3}(u,x_1).
\end{align*}
From the twisted Jacobi identity for $(M_3,Y_{M_3})$ we have
\begin{align*}
&(y_2+z_0)^{\frac{r}{T}}Y_{M_3}(Y(u,z_0)v,y_2)\nonumber\\
=\ & \text{Res}_{y_1}y_{1}^{\frac{r}{T}}\left(x_0^{-1}\delta\left(\frac{y_1-y_2}{z_0}\right)Y_{M_3}(u,y_1)Y_{M_3}(v,y_2)
-x_0^{-1}\delta\left(\frac{y_2-y_1}{-z_0}\right)Y_{M_3}(v,y_2)Y_{M_3}(u,y_1)\right).
\end{align*}
Using these properties and the standard delta-function substitutions, we get
\begin{align*}
&(y_2+z_0)^{\frac{r}{T}}(y_2-x)^{n}(y_2-x+z_0)^{n}Y_{M_3}(Y(u,z_0)v,y_2)\phi(x)\nonumber\\
=\ &(y_2+z_0)^{\frac{r}{T}} (y_2-x)^{n}(y_2-x+z_0)^{n}\phi(x)Y_{M_2}(Y(u,z_0)v,y_2).
\end{align*}
Multiplying both sides by $(y_2+z_0)^{-\frac{r}{T}}$ (noticing that we are allowed to do so since $Y(u,z_0)v\in V((z_0))$),
we obtain
\begin{align}
&(y_2-x)^{n}(y_2-x+z_0)^{n}Y_{M_3}(Y(u,z_0)v,y_2)\phi(x)\nonumber\\
=\ &(y_2-x)^{n}(y_2-x+z_0)^{n}\phi(x)Y_{M_2}(Y(u,z_0)v,y_2).
\end{align}
Noticing that $u_{m}v\in V^{r+s}$ for $m\in \Z$, using Lemma \ref{technical-2} we obtain
\begin{align}
&z_2^{n}(z_2+z_0)^{n}(x+z_2)^{\frac{r+s}{T}}Y_{\mathcal{H}}(Y(u,z_0)v,z_2)\phi(x)\nonumber\\
=\  &\left((x_2-x)^{n}(x_2-x+z_0)^{n}x_2^{\frac{r+s}{T}}Y_{M_3}(Y(u,z_0)v,x_2)\phi(x)\right)|_{x_2=x+z_2}.
\end{align}
On the other hand, we have
\begin{align*}
(x_2+z_0)^{\frac{r}{T}}z_0^{n}Y_{M_3}(Y(u,z_0)v,x_2)
=\left((x_1-x_2)^{n}x_1^{\frac{r}{T}}Y_{M_3}(u,x_1)Y_{M_3}(v,x_2)\right)|_{x_1=x_2+z_0},
\end{align*}
which gives
\begin{align*}
z_0^{n}Y_{M_3}(Y(u,z_0)v,x_2)
=(x_2+z_0)^{-\frac{r}{T}}\left((x_1-x_2)^{n}x_1^{\frac{r}{T}}Y_{M_3}(u,x_1)Y_{M_3}(v,x_2)\right)|_{x_1=x_2+z_0}.
\end{align*}
Then
\begin{align*}
&z_2^{n}z_0^{n}(z_2+z_0)^{n}(x+z_2)^{\frac{r+s}{T}}Y_{\mathcal{H}}(Y(u,z_0)v,z_2)\phi(x)\nonumber\\
=\  &\left((x_2-x)^{n}(x_2-x+z_0)^{n}x_2^{\frac{r+s}{T}}z_0^{n}Y_{M_3}(Y(u,z_0)v,x_2)\phi(x)\right)|_{x_2=x+z_2}\nonumber\\
=\ &\left((x_2-x)^{n}(x_2-x+z_0)^{n}x_2^{\frac{r+s}{T}}(x_2+z_0)^{-\frac{r}{T}} \left((x_1-x_2)^{n}x_1^{\frac{r}{T}}
Y_{M_3}(u,x_1)Y_{M_3}(v,x_2)\phi(x)\right)|_{x_1=x_2+z_0}\right)|_{x_2=x+z_2}\nonumber\\
=\ &\left((1+z_0/x_2)^{-\frac{r}{T}} \left((x_2-x)^{n}(x_1-x)^{n}(x_1-x_2)^{n}x_1^{\frac{r}{T}}
x_2^{\frac{s}{T}}Y_{M_3}(u,x_1)Y_{M_3}(v,x_2)\phi(x)\right)|_{x_1=x_2+z_0}\right)|_{x_2=x+z_2}\nonumber\\
=\ &\left(1+\frac{z_0}{x+z_2}\right)^{-\frac{r}{T}} \left(\left((x_2-x)^{n}(x_1-x)^{n}(x_1-x_2)^{n}x_1^{\frac{r}{T}}
x_2^{\frac{s}{T}}Y_{M_3}(u,x_1)Y_{M_3}(v,x_2)\phi(x)\right)|_{x_1=x_2+z_0}\right)|_{x_2=x+z_2},
\end{align*}
which gives
\begin{align}\label{iterate-1}
&z_2^{n}z_0^{n}(z_2+z_0)^{n}(x+z_2)^{\frac{s}{T}}(x+z_2+z_0)^{\frac{r}{T}} Y_{\mathcal{H}}(Y(u,z_0)v,z_2)\phi(x)\nonumber\\
=\ &\left(\left((x_2-x)^{n}(x_1-x)^{n}(x_1-x_2)^{n}x_1^{\frac{r}{T}}
x_2^{\frac{s}{T}}Y_{M_3}(u,x_1)Y_{M_3}(v,x_2)\phi(x)\right)|_{x_1=x_2+z_0}\right)|_{x_2=x+z_2}.
\end{align}

On the other hand, we consider $Y_{\mathcal{H}}(u,z_1)Y_{\mathcal{H}}(v,z_2)\phi(x)$.
With (\ref{H-products}), by Lemma \ref{technical-2} we have
\begin{align*}
&z_1^{k}(z_1-z_2)^{k}(x+z_1)^{\frac{r}{T}}Y_{\mathcal{H}}(u,z_1)Y_{\mathcal{H}}(v,z_2)\phi(x)\nonumber\\
=\ &\left((x_1-x)^{k}(x_1-x-z_2)^{k}x_1^{\frac{r}{T}}Y_{M_3}(u,x_1)(Y_{\mathcal{H}}(v,z_2)\phi(x))\right)|_{x_1=x+z_1}.
\end{align*}
We also have
$$z_2^{k}(x+z_2)^{\frac{s}{T}}Y_{\mathcal{H}}(v,z_2)\phi(x)
=\left( (x_2-x)^{k}x_2^{\frac{s}{T}}Y_{M_3}(v,x_2)\phi(x)\right)|_{x_2=x+z_2}.$$
Then we get
\begin{align*}
&z_2^{k}(x+z_2)^{\frac{s}{T}}z_1^{k}(z_1-z_2)^{k}(x+z_1)^{\frac{r}{T}}Y_{\mathcal{H}}(u,z_1)Y_{\mathcal{H}}(v,z_2)\phi(x)\nonumber\\
=\ &\left((x_1-x)^{k}(x_1-x-z_2)^{k}x_1^{\frac{r}{T}}Y_{M_3}(u,x_1)
\left( (x_2-x)^{k}x_2^{\frac{s}{T}}Y_{M_3}(v,x_2)\phi(x)\right)|_{x_2=x+z_2}\right)|_{x_1=x+z_1}\nonumber\\
=\ &\left( \left((x_1-x)^{k}(x_1-x_2)^{k}
(x_2-x)^{k}x_1^{\frac{r}{T}}x_2^{\frac{s}{T}}Y_{M_3}(u,x_1)Y_{M_3}(v,x_2)\phi(x)\right)|_{x_2=x+z_2}\right)|_{x_1=x+z_1},
\end{align*}
which gives
\begin{align*}
&z_2^{k}(x+z_2)^{\frac{s}{T}}(z_2+z_0)^{k}z_0^{k}(x+z_2+z_0)^{\frac{r}{T}}Y_{\mathcal{H}}(u,z_2+z_0)Y_{\mathcal{H}}(v,z_2)\phi(x)\nonumber\\
=\ &\left( \left((x_1-x)^{k}(x_1-x_2)^{k}
(x_2-x)^{k}x_1^{\frac{r}{T}}x_2^{\frac{s}{T}}Y_{M_3}(u,x_1)Y_{M_3}(v,x_2)\phi(x)\right)|_{x_2=x+z_2}\right)|_{x_1=x+z_2+z_0}.
\end{align*}
Combining this with (\ref{iterate-1}) (replacing $n$ with a larger one so that $n\ge k$) we obtain
\begin{align*}
&z_2^{n}z_0^{n}(z_2+z_0)^{n}(x+z_2)^{\frac{s}{T}}(x+z_2+z_0)^{\frac{r}{T}} Y_{\mathcal{H}}(Y(u,z_0)v,z_2)\phi(x)\nonumber\\
=&\  z_2^{n}(x+z_2)^{\frac{s}{T}}(z_2+z_0)^{n}z_0^{n}(x+z_2+z_0)^{\frac{r}{T}}Y_{\mathcal{H}}(u,z_2+z_0)Y_{\mathcal{H}}(v,z_2)\phi(x),
\end{align*}
noticing that under the formal series expansion convention, for any $m\in \Z$,
$$(x_2+z_0)^{m}|_{x_2=x+z_2}=((x+z_2)+z_0)^{m}=(x+(z_2+z_0))^{m}=x_1^{m}|_{x_1=x+z_2+z_0}.$$
Multiplying both sides by $z_2^{-n}z_0^{-n}(x+z_2)^{-\frac{s}{T}}(x+z_2+z_0)^{-\frac{r}{T}}$ we get
\begin{align*}
(z_2+z_0)^{n}Y_{\mathcal{H}}(Y(u,z_0)v,z_2)\phi(x)
=(z_2+z_0)^{n}Y_{\mathcal{H}}(u,z_2+z_0)Y_{\mathcal{H}}(v,z_2)\phi(x).
\end{align*}
This establishes the weak associativity. Therefore, $(\mathcal{H}(M_{2},M_{3}),Y_{\mathcal{H}})$
carries the structure of a weak $V$-module.
\end{proof}

We end this section with the following technical result:

\begin{lemma}\label{generating-H}
Let $\tau,\ T,\  M_{2},\  M_{3}$ be given as before and let $\phi(x)\in (\text{Hom}(M_2,M_3))\{z\}$.
Suppose that $\phi(x)$ satisfies the conditions (G1) and (G2) and suppose that the condition (G3) holds for any $u\in U$, where
$U$ is a subset of $V$ such that $\cup_{j=0}^{k-1}\tau^{j}(U)$ generates  $V$ as a vertex algebra.
Then $\phi(x)\in {\mathcal{H}}(M_2,M_3)$.
\end{lemma}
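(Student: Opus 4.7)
Set $S = \{v \in V \mid v \text{ satisfies condition (G3) with respect to } \phi\}$. I will show that $S$ is a $\tau$-invariant vertex subalgebra of $V$ containing $U$; combined with the generating hypothesis this forces $S = V$.

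First, $\mathbf{1} \in S$ trivially (take $k=0$ since $Y_{M_i}(\mathbf{1}, x) = \mathrm{Id}$), and $S$ is a linear subspace: if (G3) holds for $u, v$ with integers $k_u, k_v$, then it holds for $u+v$ with $\max(k_u, k_v)$. To see that $S$ is $\tau$-invariant, decompose any $v \in S$ as $v = v^0 + \cdots + v^{T-1}$ with $v^r \in V^r$; then in the identity $(x_1 - x_2)^k Y_{M_3}(v, x_1) \phi(x_2) = (x_1 - x_2)^k \phi(x_2) Y_{M_2}(v, x_1)$, the $x_1$-powers contributed by the summand $Y_{M_i}(v^r, x_1)$ lie in the coset $-r/T - 1 + \Z$, and these cosets are pairwise disjoint in $\Q/\Z$ for distinct $r \in \{0, \dots, T-1\}$. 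Hence the identity separates by $\tau$-eigenspaces, yielding (G3) for each $v^r$ with the same $k$; in particular $\tau v \in S$ and $\bigcup_{j} \tau^j(U) \subseteq S$.

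The core step is to prove closure of $S$ under the mode operations $u_n v$ ($n \in \Z$) when $u, v \in S$. By the eigenspace reduction just given, I may assume $u \in V^r$ and $v \in V^s$. Choose a nonnegative integer $K$ large enough to give simultaneously: (G3) for both $u$ and $v$; the weak commutativity of $Y_{M_i}(u, y_1)$ with $Y_{M_i}(v, y_2)$ for $i = 2, 3$; and the weak associativity equivalence of Remark \ref{equiv. of Jacobi Idenitty} applied to both $M_2$ and $M_3$. Moving $\phi(x)$ through $Y_{M_3}(u, y_1)$ and $Y_{M_3}(v, y_2)$ one factor at a time via (G3) then yields the three-variable identity
\begin{equation*}
(y_1-x)^K (y_2-x)^K (y_1-y_2)^K Y_{M_3}(u,y_1) Y_{M_3}(v,y_2) \phi(x) = (y_1-x)^K (y_2-x)^K (y_1-y_2)^K \phi(x) Y_{M_2}(u,y_1) Y_{M_2}(v,y_2).
\end{equation*}
Substituting $y_1^{1/T} = (y_2 + z_0)^{1/T}$, which is legitimate thanks to the factor $(y_1 - y_2)^K$, and then applying the weak associativity identity on both sides, the above becomes
\begin{equation*}
((y_2-x)+z_0)^K (y_2-x)^K z_0^K Y_{M_3}(Y(u,z_0)v, y_2) \phi(x) = ((y_2-x)+z_0)^K (y_2-x)^K z_0^K \phi(x) Y_{M_2}(Y(u,z_0)v, y_2).
\end{equation*}

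Since $Y(u, z_0) v \in V((z_0))$, both sides are formal Laurent series in $z_0$ with only finitely many negative powers. Let $N_0$ be such that $u_n v = 0$ for $n > N_0$; comparing the coefficient of $z_0^{K - N_0 - 1}$ (the smallest surviving power) immediately yields $(y_2 - x)^{2K} [Y_{M_3}(u_{N_0} v, y_2) \phi(x) - \phi(x) Y_{M_2}(u_{N_0} v, y_2)] = 0$, which is (G3) for the top mode $u_{N_0} v$. Proceeding by descending induction on $n$, the coefficient of $z_0^{K - n - 1}$ expresses a fixed linear combination of the terms $(y_2 - x)^{2K - j} [Y_{M_3}(u_{n+j} v, y_2) \phi(x) - \phi(x) Y_{M_2}(u_{n+j} v, y_2)]$ for $j = 0, 1, \dots$; multiplying by a sufficiently large power of $(y_2 - x)$ causes the inductively-known higher-index ($j \geq 1$) terms to vanish, isolating the $j = 0$ term and giving (G3) for $u_n v$. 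The main technical obstacle, namely the mixing of several modes at each $z_0$-coefficient, is precisely what this multiplication trick resolves. Once closure of $S$ under mode operations is established, $S$ is a $\tau$-invariant vertex subalgebra of $V$ containing $U$, and therefore $S = V$ by the generating hypothesis.
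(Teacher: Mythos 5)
Your proof is correct and takes essentially the same route as the paper's: both define the set of vectors satisfying (G3), show it is a $\tau$-stable subspace containing $\mathbf{1}$ and $U$, derive the identity $(y_2-x)^{K}(y_2-x+z_0)^{K}\bigl[Y_{M_3}(Y(u,z_0)v,y_2)\phi(x)-\phi(x)Y_{M_2}(Y(u,z_0)v,y_2)\bigr]=0$ from (G3) together with twisted associativity, and then get (G3) for each mode $u_nv$ by the same descending induction with the multiply-by-a-large-power-of-$(y_2-x)$ trick that the paper borrows from the first part of the proof of Theorem \ref{H is a weak module}. The only cosmetic variations are that you reach the key identity via the substitution form of weak associativity from Remark \ref{equiv. of Jacobi Idenitty} and prove $\tau$-stability by separating the $x_1$-power cosets $-r/T-1+\Z$, whereas the paper manipulates the residue form of the twisted Jacobi identity and uses the substitution $x_1^{1/T}\mapsto \eta_T^{-1}x_1^{1/T}$; these are interchangeable computations with the same content.
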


\begin{proof} Let $K$ consist of all vectors $a\in V$ such that
$$(x_1-x)^{n}Y_{M_3}(a,x_1)\phi(x)=(x_1-x)^{n}\phi(x)Y_{M_2}(a,x_1)$$
for some nonnegative integer $n$. We must prove $K=V$.
As $\cup_{j=0}^{T-1}\tau^{j}(U)$ generates $V$ as a vertex algebra,  it suffices to prove
that $K$ is a vertex subalgebra that contains $\cup_{j=0}^{T-1}\tau^{j}(U)$.
 It is clear that $K$ is a subspace with ${\bf 1}\in K$, and  from assumption we have $U\subset K$.
 Note that for any $v\in V$, we have
 $$Y_{M_i}(\tau(v),z)=\lim_{z^{1/T}\rightarrow \eta_T^{-1}z^{1/T}}Y_{M_i}(v,z)$$
 with $i=2,3$. It then follows that $\tau (K)\subset K$. Thus $\cup_{j=0}^{T-1}\tau^{j}(U)\subset K$.
Now, it remains to prove that $u_{m}v\in K$ for any $u,v\in K,\ m\in \Z$. As $\tau(K)\subset K$,
it suffices to consider $u\in K\cap V^r$ with $r\in \Z$. Note that
\begin{align}\label{twisted-module-iterate-formula}
&(z_2+z_0)^{\frac{r}{T}}Y_{M_i}(Y(u,z_0)v,z_2)\nonumber\\
=\ &\text{Res}_{z_1}z_1^{\frac{r}{T}}
\left(z_0^{-1}\delta\left(\frac{z_1-z_2}{z_0}\right)Y_{M_i}(u,z_1)Y_{M_i}(v,z_2)
-z_0^{-1}\delta\left(\frac{z_2-z_1}{-z_0}\right)Y_{M_i}(v,z_2)Y_{M_i}(u,z_1) \right)
\end{align}
holds on $M_i$ for $i=2,3$. Let $k$ be a positive integer such that
$$(z_1-z)^{k}Y_{M_3}(a,z_1)\phi(z)=(z_1-z)^{k}\phi(z)Y_{M_2}(a,z_1)$$
for $a\in \{u,v\}$. Using these relations and (\ref{twisted-module-iterate-formula}),
we get
$$(z_2-z)^{k}(z_2+z_0-z)^{k}Y_{M_3}(Y(u,z_0)v,z_2)\phi(z)=(z_2-z)^{k}(z_2+z_0-z)^{k}\phi(z)Y_{M_2}(Y(u,z_0)v,z_2).$$
Just as in the first part of the proof of Theorem \ref{H is a weak module},
it follows from this relation and an induction that $u_{m}v\in K$ for all $m\in \Z$. Therefore, we conclude $K=V$ and hence
$\phi(x)\in {\mathcal{H}}(M_2,M_3)$.
\end{proof}

\begin{proposition}\label{isomorphism of V-hom and Intertwining}
Let $M_{2},$ $M_{3}$ be weak $\tau$-twisted $V$-modules and let $M$ be any weak $V$-module.
Then for any intertwining operator $I(\cdot,x)$ of type
$\binom{M_3}{M\ M_{2}},$ we have $I(w,x)\in \mathcal{H}(M_{2},M_{3})$ for $w\in M$, and
the map $f_I$ defined by $f_I(w)=I(w,x)$ for $w\in M$ is a homomorphism of weak $V$-modules.
Furthermore, the linear map
$\theta: I_{V}\binom{M_3}{M\ M_{2}}\rightarrow \text{Hom}_{V}(M,\mathcal{H}(M_{2},M_{3}))$
defined by $\theta (I)=f_I$ is a linear isomorphism.
\end{proposition}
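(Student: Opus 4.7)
The plan is to establish the isomorphism in four stages, three routine and one carrying the main technical content.

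First, I show that for any intertwining operator $I(\cdot,x)$ of type $\binom{M_3}{M\,M_2}$ and any $w\in M$, the series $I(w,x)$ lies in $\mathcal{H}(M_2,M_3)$. Condition (G1) is the lower truncation built into Definition \ref{Intertwining operator for twisted modules}; (G2) is the standard consequence $[L(-1),I(w,x)]=I(L(-1)w,x)=\frac{d}{dx}I(w,x)$ of the Jacobi identity (taking $u=\omega$) together with the $L(-1)$-derivative axiom; and (G3) is the integer-power commutativity of Proposition \ref{p2.14}, which in our setting specializes to $j_1/T_1=0$ because $g_1=1$. Hence $f_I(w):=I(w,x)$ really lives in $\mathcal{H}(M_2,M_3)$, so $\theta$ maps into $\mathrm{Hom}(M,\mathcal{H}(M_2,M_3))$.

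Second, I check that $f_I$ is a weak $V$-module homomorphism, i.e., that $I(Y_M(v,z_0)w,x)=Y_{\mathcal{H}}(v,z_0)I(w,x)$ for every $v\in V$ and $w\in M$. For homogeneous $v\in V^r$, I obtain this by substituting $\phi(x)=I(w,x)$ in the defining relation (\ref{H-Jacobi}) for $Y_{\mathcal{H}}$, then invoking the twisted Jacobi identity of Definition \ref{Intertwining operator for twisted modules} to rewrite its right-hand side as $z_2^{-1}((z_1-z_0)/z_2)^{-r/T}\delta((z_1-z_0)/z_2)I(Y_M(v,z_0)w,z_2)$, and finally matching both forms via the formal-calculus identity relating $z_1^{-1}((z_2+z_0)/z_1)^{r/T}\delta(\cdot)$ and $z_2^{-1}((z_1-z_0)/z_2)^{-r/T}\delta(\cdot)$ recorded just after (\ref{Jacobi for twisted V-module}). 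Injectivity of $\theta$ is then immediate, since $f_I=0$ forces $I(w,x)=0$ for every $w$.

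Third, for surjectivity, given $f\in\mathrm{Hom}_V(M,\mathcal{H}(M_2,M_3))$ I define $\tilde I(w,x):=f(w)$ and check that $\tilde I$ is an intertwining operator of type $\binom{M_3}{M\,M_2}$. Lower truncation follows from (G1) for each $f(w)$. The $L(-1)$-derivative formula $\frac{d}{dx}\tilde I(w,x)=\tilde I(L(-1)w,x)$ follows by combining the homomorphism relation $f(L(-1)w)=L(-1)_{\mathcal{H}}f(w)$ with the direct computation, using (\ref{unH-phi}) for $u=\omega$, $n=0$ together with (G2), that $L(-1)_{\mathcal{H}}\phi(x)=[L(-1),\phi(x)]=\frac{d}{dx}\phi(x)$. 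The twisted Jacobi identity for $\tilde I$, the only substantive point, is obtained by applying (\ref{H-Jacobi}) to $\phi=f(w)$, replacing $Y_{\mathcal{H}}(v,z_0)f(w)$ by $f(Y_M(v,z_0)w)=\tilde I(Y_M(v,z_0)w,x)$ from the homomorphism hypothesis, and using the same delta-function identity as in the second stage to recognize the result as the twisted Jacobi identity of Definition \ref{Intertwining operator for twisted modules} for $\tilde I$. That the two constructions are mutually inverse is then tautological from $f_{\tilde I}=f$ and $\widetilde{f_I}=I$.

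The main obstacle throughout is the bookkeeping of the fractional powers $x^{r/T}$ and the delta-function rearrangements forced by the $\tau$-twisting of $M_2$ and $M_3$; each individual step is standard formal calculus, but the pieces must line up precisely in order for the defining equation (\ref{H-Jacobi}) of $Y_{\mathcal{H}}$ to convert into the twisted Jacobi identity for intertwining operators, and vice versa.
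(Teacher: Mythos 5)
Your proof is correct and takes essentially the same route as the paper's: (G1)--(G3) for $I(w,x)$ via the generalized commutativity of Proposition \ref{p2.14}, the homomorphism property of $f_I$ by transporting the twisted Jacobi identity through the defining relation (\ref{H-Jacobi}) (equivalently, the residue formula (\ref{H action}), which is what the paper computes with directly), injectivity as an immediate observation, and surjectivity by reading (\ref{H-Jacobi}) backwards for $\phi=f(w)$ --- the same content as the paper's combination of the weak commutativity (\ref{weak-comm-If}) with the residue identity derived from the homomorphism hypothesis. Your explicit derivation of the $L(-1)$-derivative axiom from $\omega_{0}^{\mathcal{H}}\phi(x)=[L(-1),\phi(x)]=\frac{d}{dx}\phi(x)$ is a slightly more detailed rendering of a step the paper only asserts, but it is not a different argument.
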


\begin{proof} Let  $I(\cdot,x)$ be any intertwining operator
of type $\binom{M_3}{M\ M_{2}}$. From definition, we see that
 for every $w\in M$, $I(w,x)$ satisfies conditions (G1) and (G2).
 As the Jacobi identity for $I(\cdot,x)$ implies commutator formula
 which furthermore implies locality, $I(w,x)$ also satisfies (G3).
Thus $I(w,x)\in\mathcal{H}(M_{2},M_{3})$.
Then we have a linear map $f_{I}$ from $M$ to
$\mathcal{H}(M_{2},M_{3})$ defined by $f_{I}(w)=I(w,x)$ for $w\in M$.
For any $u\in V^{r}$, $w\in M$ with $r\in \Z$, we have
\begin{alignat*}{1}
 & f_{I}(Y(u,x_{0})w)\\
 =\  & I(Y(u,x_{0})w,x)\\
 =\  & \mbox{Res}_{x_{1}}\left(\frac{x+x_{0}}{x_1}\right)^{-\frac{r}{T}}\left(x_{0}^{-1}\delta\left(\frac{x_{1}-x}{x_{0}}\right)Y_{M_{3}}(u,x_{1})I(w,x)-x_{0}^{-1}\delta\left(\frac{x-x_{1}}{-x_{0}}\right)I(w,x)Y_{M_{2}}(u,x_{1})\right)\\
 =\  & Y_{\mathcal{H}}(u,x_{0})I(w,x)\\
 =\  & Y_{\mathcal{H}}(u,x_{0})f_{I}(w).
\end{alignat*}
This proves that  $f_{I}$ is a homomorphism of weak $V$-modules.

For the second assertion, it is clear that $\theta$ is a one-to-one linear map.
To prove that it is onto, let $f$ be a $V$-homomorphism from $M$ to $\mathcal{H}(M_{2},M_{3}).$
For every $w\in M$, noticing that as an element of $\mathcal{H}(M_{2},M_{3})$, $f(w)$ is a formal series in a formal variable,
we write $f(w)$ as $f(w)(x)$, and then set $I_{f}(w,x)=f(w)(x)\in \mathcal{H}(M_{2},M_{3})$.
This gives a linear map $I_{f}(\cdot,x)$ from $M$ to $({\rm Hom}_{\C}(M_2,M_3))\{x\}$.
We have
$$[L(-1), I_{f}(w,z)]=\frac{d}{dz}I_{f}(w,z).$$
For any $v\in V,\ w\in M$, as $f(w)(x)\in {\mathcal{H}}(M_2,M_3)$, by condition (G3)
there exists a nonnegative integer $k$ such that
$$(x_1-x)^{k}Y_{M_3}(v,x_1)f(w)(x)=(x_1-x)^{k}f(w)(x)Y_{M_2}(v,x_1),$$
which amounts to
\begin{align}\label{weak-comm-If}
(x_1-x)^{k}Y_{M_3}(v,x_1)I_{f}(w,x)=(x_1-x)^{k}I_{f}(w,x)Y_{M_2}(v,x_1).
\end{align}
On the other hand, for $v\in V^r$ with $r\in\Z$ and $w\in M$, with $f$ a homomorphism of weak $V$-modules, we have
\begin{align*}
&(z_2+z_0)^{\frac{r}{T}}I_f(Y_M(v,z_0)w,z_2)\nonumber\\
=\ &(z_2+z_0)^{\frac{r}{T}} f(Y_{M}(v,z_0)w)(z_2)\nonumber\\
=\ &(z_2+z_0)^{\frac{r}{T}} Y_{\mathcal{H}}(v,z_0)f(w)(z_2)\nonumber\\
=\ & \text{Res}_{z_1}z_1^{\frac{r}{T}}\left(z_0^{-1}\delta\left(\frac{z_{1}-z_2}{z_0}\right)
 Y_{M_{3}}(v,z_{1})f(w)(z_2)-z_0^{-1}\delta\left(\frac{z_2-z_{1}}{-z_0}\right)f(w)(z_2)Y_{M_{2}}(v,z_{1})\right)\nonumber\\
=\ & \text{Res}_{z_1}z_1^{\frac{r}{T}}\left(z_0^{-1}\delta\left(\frac{z_{1}-z_2}{z_0}\right)
 Y_{M_{3}}(v,z_{1})I_{f}(w,z_2)-z_0^{-1}\delta\left(\frac{z_2-z_{1}}{-z_0}\right)I_{f}(w,z_2)Y_{M_{2}}(v,z_{1})\right).
\end{align*}
This together with (\ref{weak-comm-If}) gives the Jacobi identity. Therefore, $I_{f}(\cdot,x)$ is an intertwining operator
of type $\binom{M_3}{M\ M_{2}}.$ It is clear that $\theta(I_{f})=f$. This proves that $\theta$ is onto, concluding the proof.
\end{proof}

\section{Permutation orbifolds}
In this section, we shall apply the results of Section \ref{subsec:Space-of-generalized-general results}
to the permutation orbifold model.

\subsection{Linear isomorphism between $I_{V}\binom{W}{M\ N}$
and $I_{V^{\otimes k}}\binom{T_{\sigma}(W)}{M^{1}\ T_{\sigma}(N)}$ }

Let $V$ be a vertex operator algebra and let $k$ be a positive integer with $k\ge 2$, which are all fixed throughout this section.
Set $\sigma=\left(1\ 2\ \cdots k\right)$, which is an automorphism of $V^{\otimes k}$.
Let $M$, $N$, and $W$ be irreducible $V$-modules. Set $M^{1}=M\otimes V^{\otimes (k-1)},$
which is a $V^{\otimes k}$-module.
Let $T_{\sigma}(N)$ and $T_{\sigma}(W)$ be the $\sigma$-twisted
$V^{\otimes k}$-modules constructed in \cite{BDM}. In this section,
we prove that there is a canonical linear isomorphism between the spaces $I_{V}\binom{W} {M\ N}$
and $I_{V^{\otimes k}}\binom{T_{\sigma}(W)}{M^{1}\ T_{\sigma}(N)}$. That
is, given an intertwining operator of type $\binom{W}{M\ N}$,
we can construct an intertwining operator of type $\binom{T_{\sigma}(W)}{M^{1}\ T_{\sigma}(N)}$.
Conversely, given an intertwining operator of type $\binom{T_{\sigma}(W)}{M^{1}\ T_{\sigma}(N)}$,
we can construct an intertwining operator of type $\binom{W}{M\ N}$.
As one of the main results, we obtain the following tensor product relation:
\[
\left(M\otimes V^{\otimes (k-1)}\right)\boxtimes_{V^{\otimes k}}T_{\sigma}\left(N\right)
\simeq T_{\sigma}\left(M\boxtimes_{V}N\right).
\]

\subsubsection{From $I_{V}\binom{W}{M\ N}$ to
$I_{V^{\otimes k}}\binom{T_{\sigma}(W)}{M^1\ T_{\sigma}(N)}$ }

For any $u\in V$, $1\le j\le k$, we denote by $u^{j}$ the vector in $V^{\otimes k}$
whose $j^{\text{th }}$ tensor factor is $u$ and the other tensor
factors are $\boldsymbol{1}$:
\begin{align}
u^j={\bf 1}^{\otimes(j-1)}\otimes u\otimes {\bf 1}^{\otimes (k-j)}.
\end{align}
We have
\begin{align}
Y(u^j,z)=1^{\otimes(j-1)}\otimes Y(u,z)\otimes 1^{\otimes (k-j)}.
\end{align}
Note that $\sigma u^{j}=u^{j+1}$ for $1\le j\le k$,
where $u^{k+1}=u^1$ by convention.
For any weak $V^{\otimes k}$-module $(W,Y_W)$, we have
\[
\left[Y_W(u^{i},z_{1}),Y_W(v^{j},z_{2})\right]
=\text{Res}_{z_{0}}z_{2}^{-1}\delta\left(\frac{z_{1}-z_{0}}{z_{2}}\right)
Y_W(Y(u^{i},z_{0})v^{j},z_{2})
\]
for  $u,v\in V,\ 1\le i,j\le k$.
For $n\ge 0$, as $u_{n}{\bf 1} =0$ (in $V$)  we have $(u^i)_{n}(v^j)=0$ (in $V^{\otimes k}$) if $i\ne j$.
Then we immediately have:

\begin{lemma} \label{commutativity of bracket of u^i v^j}
 Let $(W,Y_W)$ be any weak $V^{\otimes k}$-module. Then
$\left[Y_W(u^{i},z_{1}),Y_W(v^{j},z_{2})\right]=0$
 for any $u,v\in V$, $i,j\in\left\{ 1,\dots,k\right\} $ with $i\not=j$.
\end{lemma}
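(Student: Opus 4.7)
The plan is to apply the commutator formula displayed immediately before the lemma,
$$[Y_W(u^{i},z_{1}),Y_W(v^{j},z_{2})]=\mathrm{Res}_{z_{0}}\,z_{2}^{-1}\delta\!\left(\frac{z_{1}-z_{0}}{z_{2}}\right)Y_W(Y(u^{i},z_{0})v^{j},z_{2}),$$
and show that the right-hand side vanishes once $i\ne j$. The key observation is that, as an element of $V^{\otimes k}\{z_{0}\}$, the formal series $Y(u^{i},z_{0})v^{j}$ involves only nonnegative integer powers of $z_{0}$.

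To see this, I would use the tensor-product structure of $V^{\otimes k}$: the vertex operator $Y(u^{i},z_{0})$ acts as $Y(u,z_{0})$ in the $i$-th slot and as $Y(\mathbf{1},z_{0})=\mathrm{Id}$ in every other slot. When applied to $v^{j}$, the $i$-th slot becomes $Y(u,z_{0})\mathbf{1}=e^{z_{0}L(-1)}u\in V[[z_{0}]]$, the $j$-th slot is simply $v$, and all other slots are $\mathbf{1}$. Equivalently, this is precisely the observation recalled in the paragraph just above the lemma statement: for $i\ne j$ one has $(u^{i})_{n}v^{j}=0$ for every $n\ge 0$, so $Y(u^{i},z_{0})v^{j}\in V^{\otimes k}[[z_{0}]]$.

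Consequently $Y_W(Y(u^{i},z_{0})v^{j},z_{2})$ lies in $(\mathrm{End}\,W)\{z_{2}\}[[z_{0}]]$. Expanding $z_{2}^{-1}\delta\!\left(\frac{z_{1}-z_{0}}{z_{2}}\right)$ in the standard way, namely with each $(z_{1}-z_{0})^{n}$ expanded as a nonnegative power series in $z_{0}$, the product contains no $z_{0}^{-1}$ term. Hence $\mathrm{Res}_{z_{0}}$ annihilates it, giving $[Y_W(u^{i},z_{1}),Y_W(v^{j},z_{2})]=0$.

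There is no real obstacle here; the argument is essentially a one-line consequence of the commutator formula together with the $\mathbf{1}$-creation property $Y(u,z)\mathbf{1}\in V[[z]]$ applied in each tensor slot. The only minor point requiring attention is the conventional expansion of $\delta(\frac{z_{1}-z_{0}}{z_{2}})$ in nonnegative powers of the second variable $z_{0}$, but this is the standard convention used throughout the paper.
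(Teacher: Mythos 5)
Your proof is correct and follows essentially the same route as the paper: the authors likewise combine the commutator formula displayed just before the lemma with the observation that $(u^{i})_{n}(v^{j})=0$ for $n\ge 0$ when $i\ne j$ (a consequence of $u_{n}\mathbf{1}=0$ in $V$), so that $Y(u^{i},z_{0})v^{j}\in V^{\otimes k}[[z_{0}]]$ and the residue $\mathrm{Res}_{z_{0}}$ annihilates the right-hand side. Your slot-by-slot justification via $Y(u,z_{0})\mathbf{1}=e^{z_{0}L(-1)}u\in V[[z_{0}]]$ simply makes explicit the one-line reason the paper leaves implicit.
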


Let $k$ be a positive integer. Recall from \cite{BDM}
\begin{align}\label{delta-kx}
\Delta_{k}(z)=\exp\left(\sum_{n\ge 1}a_{n}z^{-\frac{n}{k}}L(n)\right)k^{-L(0)}z^{(1/k-1)L(0)},
\end{align}
where the coefficients $a_n$ for $n\ge 1$ are uniquely determined by
\begin{align}\label{adefine}
\exp \left(\sum_{n\ge 1}-a_{n}x^{n+1}\frac{d}{dx}\right) x=\frac{1}{k}(1+x)^{k}-\frac{1}{k}.
\end{align}
As in \cite{BDM}, we shall also use the expression $\Delta_k(z^k)^{-1}$. For convenience,
we set
\begin{align}
\Phi_k(z)=\Delta_k(z^k)^{-1}=(kz^{k-1})^{L(0)}\exp\left(-\sum_{n\ge 1}a_{n}z^{-n}L(n)\right).
\end{align}

The following results were obtained in \cite{BDM} (Proposition 2.2 and Corollary 2.5):

\begin{proposition}\label{BDM-formulas}
 Let $(W,Y_{W})$ be any $V$-module. Then
\begin{align}
&\Delta_{k}(x)Y_{W}(v,z)w
=Y_{W}\left(\Delta_{k}(x+z)v,(x+z)^{1/k}-x^{1/k}\right)\Delta_{k}(x)w,\label{delta-conjugation}\\
&\Phi_{k}(x)Y_{W}(v,z)w
=Y_{W}\left(\Phi_{k}(x+z)v,(x+z)^{k}-x^{k}\right)\Phi_{k}(x)w,\\
&\Delta_{k}(z)L(-1)w-\frac{1}{k}z^{1/k-1}L(-1)\Delta_k(z)w=
\frac{d}{dz}\Delta_{k}(z)w,\\
&\Delta_{k}(z)\omega=\frac{1}{k^2}z^{2(\frac{1}{k}-1)}\omega+z^{-2}\frac{c}{24}(1-k^{-2}){\bf 1}
\end{align}
for $v\in V,\ w\in W$, where $\omega$ is the conformal vector of $V$ and $c$ is the central charge.
\end{proposition}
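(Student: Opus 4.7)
The plan is to establish the four identities in the stated order, with the first (equation \eqref{delta-conjugation}) serving as the master identity from which the others will follow by specialization or direct calculation.

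For the master identity \eqref{delta-conjugation}, I would factor $\Delta_k(x)=E(x)\,k^{-L(0)}\,x^{(1/k-1)L(0)}$, where $E(x)=\exp\!\bigl(\sum_{n\ge 1} a_n x^{-n/k}L(n)\bigr)$, and conjugate the two factors through $Y_W(v,z)$ separately. The scaling factor obeys the standard identity $c^{L(0)}Y_W(v,z)=Y_W(c^{L(0)}v,cz)\,c^{L(0)}$, so the combined effect of $k^{-L(0)}x^{(1/k-1)L(0)}$ is to rescale $v$ and to replace $z$ by $k^{-1}x^{1/k-1}z$. For the exponential factor, I would apply the Huang-style change-of-variables formula: conjugation of $Y_W(v,z)$ by $\exp\!\bigl(\sum_{n\ge 1}b_n L(n)\bigr)$ is implemented by a formal change of variable $z\mapsto\psi(z)$, where $\psi$ is determined by $\exp\!\bigl(\sum_{n\ge 1}-b_n z^{n+1}\partial_z\bigr)z=\psi(z)$, together with an appropriate derivative-type action on $v$. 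Specializing $b_n=a_n x^{-n/k}$ and invoking the defining relation \eqref{adefine}, the composed change of variable collapses precisely to $z\mapsto(x+z)^{1/k}-x^{1/k}$, while the induced action on $v$ is implemented by $\Delta_k(x+z)$ after reassembling terms. This is the point where the definition of the $a_n$ in \eqref{adefine} is used in an essential way, and it is the main technical obstacle of the proof.

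The second identity, involving $\Phi_k$, is the inverse version of the first. Since $\Phi_k(x)=\Delta_k(x^k)^{-1}$, substituting $x\mapsto x^k$ in \eqref{delta-conjugation} and then inverting (using that $u=(x^k+z)^{1/k}-x$ is the functional inverse of $z=(x+u)^k-x^k$) yields the desired identity with exponent $k$ in place of $1/k$.

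The third identity is proved by computing $\tfrac{d}{dz}\Delta_k(z)$ directly: differentiating the exponential factor produces $\sum_{n\ge 1}(-n/k)a_n z^{-n/k-1}L(n)$ on the left, while differentiating the scaling factor contributes $(1/k-1)z^{-1}L(0)$. Using the Virasoro brackets $[L(-1),L(n)]=-(n+1)L(n-1)$ and $[L(-1),L(0)]=-L(-1)$, together once again with the defining relation for the $a_n$, one reorganizes these derivatives into $\Delta_k(z)L(-1)-\frac{1}{k}z^{1/k-1}L(-1)\Delta_k(z)$ acting on $w$. Finally, the fourth identity is a direct evaluation of $\Delta_k(z)$ on $\omega$. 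Since $L(0)\omega=2\omega$, the scaling part produces $k^{-2}z^{2(1/k-1)}\omega$. Inside the exponential, $L(n)\omega=0$ for $n=1$ and $n\ge 3$, $L(2)\omega=\tfrac{c}{2}\mathbf{1}$, and $L(n)\mathbf{1}=0$ for $n\ge -1$, so only the linear $a_2 z^{-2/k}L(2)$ term survives, contributing $a_2\cdot\tfrac{c}{2}\cdot k^{-2}z^{2(1/k-1)-2/k}\mathbf{1}$. Matching the $x^3$-coefficient on both sides of \eqref{adefine} gives $a_2=(k^2-1)/12$, and substitution yields $z^{-2}\tfrac{c}{24}(1-k^{-2})\mathbf{1}$ for the extra term, completing the identity. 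The principal difficulty throughout is the first identity: carefully composing the change of variable induced by the exponential of positive Virasoro modes with the power-law rescaling so that the end result is exactly $(x+z)^{1/k}-x^{1/k}$ rather than some other inversion-style formal series.
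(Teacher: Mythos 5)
You should first note that the paper contains no proof of this proposition at all: it is quoted verbatim from \cite{BDM} (Proposition 2.2 and Corollary 2.5), so your sketch is effectively reconstructing the arguments of that reference. For three of the four identities your reconstruction is correct and follows essentially the \cite{BDM} route: the conjugation formula \eqref{delta-conjugation} via Huang's change-of-variables theory, with the scaling part giving $z\mapsto k^{-1}x^{1/k-1}z$ and the exponential part, through the defining relation \eqref{adefine}, composing to exactly $(x+z)^{1/k}-x^{1/k}$; the $\Phi_k$ identity by substituting $x\mapsto x^k$ and inverting, using that $u=(x^k+z)^{1/k}-x$ inverts $z=(x+u)^k-x^k$ --- which is precisely how the paper itself manipulates these formulas when it derives (\ref{Delta property}) inside the proof of Lemma \ref{Commutator formula}; and the direct evaluation of $\Delta_k(z)\omega$, where your values $a_1=(1-k)/2$ and $a_2=(k^2-1)/12$ and the bookkeeping $L(1)\omega=0$, $L(2)\omega=\tfrac{c}{2}\mathbf{1}$, $L(n)\omega=0$ for $n\ge 3$ all check out.

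The genuine gap is in your treatment of the third identity. Writing $A(z)=\sum_{n\ge1}a_nz^{-n/k}L(n)$, the claim that differentiating the exponential factor ``produces $\sum_{n\ge1}(-n/k)a_nz^{-n/k-1}L(n)$ on the left'' is false: $A(z)$ and $A'(z)$ do not commute, since $[L(m),L(n)]=(m-n)L(m+n)$ gives $[A(z),A'(z)]\neq 0$, so $\frac{d}{dz}e^{A(z)}=\bigl(\sum_{j\ge0}\tfrac{1}{(j+1)!}\operatorname{ad}_{A(z)}^{j}A'(z)\bigr)e^{A(z)}$ carries infinitely many nonvanishing correction terms. Moreover, the bare derivative of $\Delta_k(z)$ contains no $L(-1)$ whatsoever, so the proposed ``reorganization'' into $\Delta_k(z)L(-1)-\frac{1}{k}z^{1/k-1}L(-1)\Delta_k(z)$ amounts to computing the full conjugation $\Delta_k(z)L(-1)\Delta_k(z)^{-1}$, an infinite adjoint series whose resummation via \eqref{adefine} is exactly the nontrivial content of the identity; your sketch asserts this step rather than performing it. The way \cite{BDM} actually prove such derivative identities --- mirrored in this paper's own Lemma \ref{newlemma} --- is to verify them in the substitution representation $L(j)\mapsto -x^{j+1}\frac{d}{dx}$ for $j\ge 1$, $L(0)\mapsto x\frac{d}{dx}$, acting on $\C[x,x^{-1}]$, where $\Delta_k^x(z)x=z\bigl((1+z^{-1/k}x)^k-1\bigr)$ turns everything into explicit calculus, with Huang's formal-geometry results guaranteeing that checking there suffices. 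Alternatively, within your own framework you could take $\mathrm{Res}_z$ of \eqref{delta-conjugation} with $v=\omega$ and use your fourth identity (the central term has zero residue), which after a formal change of variable in the residue reduces the problem to the expansion in Lemma \ref{newlemma}; but either way some such mechanism must replace the term-by-term differentiation, which fails as stated.
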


The following is one of the main results of \cite{BDM}:

\begin{theorem}\label{BDM}
Let $V$ be any vertex operator algebra and let $\sigma=(12\cdots k)\in \text{Aut}(V^{\otimes k})$.
Then for any (weak, admissible) $V$-module $(W,Y_W)$,  we have a (weak, admissible) $\sigma$-twisted
$V^{\otimes k}$-module $(T_{\sigma}(W),Y_{T_{\sigma}(W)})$, where $T_{\sigma}(W)=W$ as a vector space
and the vertex operator map $Y_{T_{\sigma}(W)}(\cdot,z)$ is uniquely determined by
\begin{align}
Y_{T_{\sigma}(W)}(u^1,z)=Y_{W}(\Delta_k(z)u,z^{1/k})\   \   \   \   \mbox{ for }u\in V,
\end{align}
where $u^1=u\otimes {\bf 1}^{\otimes (k-1)}\in V^{\otimes k}$.
Furthermore, every (weak, admissible) $\sigma$-twisted $V^{\otimes k}$-module is isomorphic to one in  this form.
\end{theorem}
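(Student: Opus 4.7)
The plan is to construct $Y_{T_\sigma(W)}$ explicitly on a generating set of $V^{\otimes k}$, verify the twisted Jacobi identity in two cases, and then obtain the classification via an inverse functor. The formulas in Proposition \ref{BDM-formulas} will be the workhorse throughout.

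First I would define, for $u \in V$ and $1 \le j \le k$,
\begin{equation*}
Y_{T_\sigma(W)}(u^j, z) := Y_W\bigl(\Delta_k(z)\, u,\ \eta_k^{1-j} z^{1/k}\bigr),
\end{equation*}
where $\eta_k = e^{2\pi i/k}$ and the factor $\eta_k^{1-j}$ is a formal substitution in the formal variable $z^{1/k}$; the case $j=1$ recovers the prescribed formula. For the $\sigma$-eigenvector $e^{r}(u) := \frac{1}{k}\sum_{j=1}^{k} \eta_k^{(j-1)r}\, u^j \in V^r$, summing the displayed formulas over $j$ yields a series whose powers of $z$ lie in $r/k + \Z$, as required for a $\sigma$-twisted module. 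I would then extend $Y_{T_\sigma(W)}$ to all of $V^{\otimes k}$ using the iterate formula (\ref{twisted-module-iterate-formula}), since $\{u^j : u \in V,\ 1 \le j \le k\}$ generates $V^{\otimes k}$ as a vertex algebra. Uniqueness follows because, once the action on $\{u^1\}$ is specified and the module is required to be $\sigma$-twisted, the action on each $\sigma^{j-1}(u^1) = u^j$ is forced by the allowed fractional powers of $z$, and the iterate formula then determines $Y_{T_\sigma(W)}$ on normal-ordered products.

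The routine checks (lower truncation, vacuum property $\Delta_k(z)\mathbf{1} = \mathbf{1}$, and compatibility with the $\sigma$-action) are immediate from the defining formulas. The twisted Jacobi identity splits into two cases, parallel to Lemma \ref{commutativity of bracket of u^i v^j}. For $i \ne j$, two factors $Y_W(\Delta_k(z_1)u, \eta_k^{1-i} z_1^{1/k})$ and $Y_W(\Delta_k(z_2)v, \eta_k^{1-j} z_2^{1/k})$ act through distinct branches of the $k$-th-root variable, and the untwisted Jacobi identity on $W$ yields that their commutator vanishes, consistent with the fact that the iterate $Y(u^i,z_0)v^j$ vanishes for $i \ne j$ on the generating set.

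The hard step is the twisted Jacobi identity for operators $Y_{T_\sigma(W)}(u^i, z_1)$ and $Y_{T_\sigma(W)}(v^i, z_2)$ on the same tensor factor. Here I would invoke the conjugation identity
\begin{equation*}
\Delta_k(x)\, Y_W(v, z) = Y_W\bigl(\Delta_k(x+z) v,\ (x+z)^{1/k} - x^{1/k}\bigr)\, \Delta_k(x)
\end{equation*}
from Proposition \ref{BDM-formulas} to rewrite a product $Y_W(\Delta_k(z_1) u, z_1^{1/k})\, Y_W(\Delta_k(z_2) v, z_2^{1/k})$ and, after the substitution $z_1 = z_2 + z_0$, match it against $Y_{T_\sigma(W)}(Y(u^1, z_0) v^1, z_2)$. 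The untwisted Jacobi identity on $W$ in the formal variables $z_1^{1/k}, z_2^{1/k}$ then translates, under the change of variables $z \leftrightarrow z^k$, into the twisted Jacobi identity (\ref{Jacobi for twisted V-module}) carrying the required factor $(z_1/z_2)^{r/k}\delta$. Keeping track of the $k$-th-root branches and of the $\Delta_k$ cocycle across this substitution is where the main obstacle lies; essentially one must verify that the cocycle relation built into $\Delta_k$ is precisely what is needed to convert an untwisted OPE into the twisted OPE.

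For the classification, I would construct an inverse functor. Given a weak $\sigma$-twisted $V^{\otimes k}$-module $(M, Y_M)$, define on the same underlying vector space a candidate $V$-module structure by
\begin{equation*}
Y_M^{V}(u, z) := Y_M\bigl(\Phi_k(z)\, u^1,\ z^k\bigr),
\end{equation*}
using $\Phi_k(z) = \Delta_k(z^k)^{-1}$. Reversing the computations above — again via Proposition \ref{BDM-formulas} and its $\Phi_k$-version — verifies that $(M, Y_M^V)$ is a weak $V$-module, and a direct substitution yields $T_\sigma(M, Y_M^V) \simeq (M, Y_M)$ as $\sigma$-twisted $V^{\otimes k}$-modules. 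This gives the required inverse, so every weak (admissible) $\sigma$-twisted $V^{\otimes k}$-module is isomorphic to $T_\sigma(W)$ for some weak (admissible) $V$-module $W$, completing the proof.
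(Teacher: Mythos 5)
You should first note that the paper itself offers no proof of this statement: it is quoted as one of the main results of \cite{BDM}, and the strategy you outline --- the operators $Y_W(\Delta_k(z)u,\eta_k^{1-j}z^{1/k})$ on the generating set $\{u^j\}$, the conjugation identity of Proposition \ref{BDM-formulas} for the same-slot Jacobi identity, and the inverse construction $Y_M\bigl((\Delta_k(z^k)^{-1}u)^1,z^k\bigr)$ --- is indeed the strategy of that reference, which the present paper reuses at the intertwining-operator level in Section 5 (compare Theorem \ref{t5.3}).

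However, your treatment of the mixed case $i\ne j$ contains a genuine error. You claim that the commutator of $Y_{T_\sigma(W)}(u^i,z_1)$ and $Y_{T_\sigma(W)}(v^j,z_2)$ vanishes for $i\ne j$, ``consistent with the fact that the iterate $Y(u^i,z_0)v^j$ vanishes.'' Neither assertion holds. The iterate does not vanish --- only the nonnegative modes $(u^i)_n v^j$, $n\ge 0$, do (for instance $(u^i)_{-1}v^j\ne 0$). More importantly, Lemma \ref{commutativity of bracket of u^i v^j} is a statement about \emph{untwisted} weak $V^{\otimes k}$-modules and fails on $\sigma$-twisted ones: to apply the twisted commutator formula (\ref{twisted-commutator-formula}) one must first decompose $u^i=\sum_r u^{i,r}$ into $\sigma$-eigenvectors, and each eigencomponent $u^{i,r}$ spreads over all tensor slots, so $(u^{i,r})_n v^j\ne 0$ for $n\ge 0$ and the residue terms survive; the fractional factor $\left(\frac{z_1-z_0}{z_2}\right)^{-r/k}$ prevents the cancellation you expect. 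You can see this within your own formulas: the substitution $x_1=\eta_k^{1-i}z_1^{1/k}$, $x_2=\eta_k^{1-j}z_2^{1/k}$ turns the untwisted commutator on $W$ (a sum of $\delta$-function derivatives supported on $x_1=x_2$) into a \emph{nonzero} series supported on the branch $z_1^{1/k}=\eta_k^{i-j}z_2^{1/k}$ of $z_1=z_2$, not into zero. Concretely, for $k=2$ and $V$ a rank-one Heisenberg vertex operator algebra, in the $(12)$-twisted module the fields of $h^1$ and $h^2$ do not commute, since $[h^{(0)}(z_1),h^{(0)}(z_2)]$ (integer modes) and $[h^{(1)}(z_1),h^{(1)}(z_2)]$ (half-integer modes) are different nonzero distributions. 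Thus the mixed-slot case is precisely where the fractional $\delta$-factor in (\ref{Jacobi for twisted V-module}) is produced, and your two-case reduction collapses. Beyond this error, the two genuinely hard steps are deferred rather than carried out: you do not show that extending $Y_{T_\sigma(W)}$ from the generators by the iterate formula is well defined (different presentations of a vector of $V^{\otimes k}$ as iterates of the $u^j$ must yield the same operator, which requires either Li's local-system machinery or the explicit verification of \cite{BDM}), and the same-slot Jacobi identity, which you correctly identify as the crux, is named as an obstacle but not overcome.
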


Set $\eta_{k}=e^{\frac{2\pi i}{k}}$, the principal primitive $k$-th root of unity.

We shall need the $L(-1)$-operator for the vertex operator algebra $V^{\otimes k}$ on the $\sigma$-twisted
$V^{\otimes k}$-module $T_{\sigma}(W)$.
Note that the conformal vector of $V^{\otimes k}$ is
$$\omega^{(k)}:=\omega^1+\cdots +\omega^{k},$$
where $\omega$ denotes the conformal vector of $V$.
Write
\begin{align}
Y_{W}(\omega,z)=\sum_{n\in \Z}L(n)z^{-n-2},\   \   \   \  \
Y_{T_{\sigma}(W)}(\omega^{(k)},z)=\sum_{n\in \Z}L_{T_{\sigma}}(n)z^{-n-2}.
\end{align}
Using the covariance property of $\sigma$-twisted modules and the expression of $\Delta_k(z)\omega$ from
Proposition \ref{BDM-formulas}, we have
\begin{align*}
&Y_{T_{\sigma}(W)}(\omega^{(k)},z)=\sum_{j=0}^{k-1}Y_{T_{\sigma}(W)}(\sigma^{j}\omega^{1},z)
=\sum_{j=0}^{k-1}\lim_{z^{1/k}\rightarrow \eta_k^{-j}z^{1/k}}Y_{T_{\sigma}(W)}(\omega^{1},z)\nonumber\\
=&\  \sum_{j=0}^{k-1}\lim_{z^{1/k}\rightarrow \eta_k^{-j}z^{1/k}}Y_{W}(\Delta_k(z)\omega,z^{1/k})\nonumber\\
=&\ \sum_{j=0}^{k-1}\frac{1}{k^2}\eta_k^{-2j}z^{2/k-2}Y_{W}(\omega,\eta_k^{-j}z^{1/k})+z^{-2}\frac{c}{24}k(1-k^{-2})
\nonumber\\
=&\  \frac{1}{k}\sum_{m\in \Z}L(mk)z^{-m-2}  +z^{-2}\frac{c}{24}(k-k^{-1}).
\end{align*}
Then
\begin{align}
L_{T_{\sigma}}(n)=\begin{cases}\frac{1}{k}L(nk)&\   \text{ for }n\ne 0\\
\frac{1}{k}L(0)+\frac{c}{24}(k-k^{-1})&\   \text{ for }n= 0.
\end{cases}
\end{align}
In particular, we have
\begin{align}
L_{T_{\sigma}}(-1)=\frac{1}{k}L(-k).
\end{align}

We have the following result which will be used later:

\begin{lemma}\label{newlemma}
Let $(W,Y_{W})$ be any $V$-module.  Then
\begin{align}
\frac{d}{dz}\Delta_{k}(z)=\frac{1}{k}\sum_{i\ge 1 }{-k+1\choose i}z^{-1-(i-1)/k}L(i-1)\Delta_{k}(z)
\end{align}
holds on $W.$
\end{lemma}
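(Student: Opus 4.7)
The strategy is to specialize the conjugation formula (\ref{delta-conjugation}) of Proposition \ref{BDM-formulas} at $v=\omega$ and then extract the claim by taking $\mathrm{Res}_{z_0}$ in the $Y_W$-variable. The key observation is that $\mathrm{Res}_{z_0} Y_W(\omega, z_0) = L(-1)$, so one side of (\ref{delta-conjugation}) will produce $\Delta_k(z)L(-1)$ in closed form, and the third identity of Proposition \ref{BDM-formulas}, namely $\frac{d}{dz}\Delta_k(z) = \Delta_k(z)L(-1) - \frac{1}{k}z^{1/k-1}L(-1)\Delta_k(z)$, will convert this into the stated formula for $\frac{d}{dz}\Delta_k(z)$.

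Substituting $v=\omega$ into (\ref{delta-conjugation}) (with the $\Delta_k$-variable named $z$ and the $Y_W$-variable named $z_0$) and using the expression for $\Delta_k(z+z_0)\omega$ from Proposition \ref{BDM-formulas} gives
\[
\Delta_k(z) Y_W(\omega, z_0) = \left[\frac{(z+z_0)^{2/k-2}}{k^2} Y_W(\omega, y) + \frac{c(1-k^{-2})}{24(z+z_0)^2}\right] \Delta_k(z),
\]
where $y := (z+z_0)^{1/k} - z^{1/k}$. Applying $\mathrm{Res}_{z_0}$, the left side becomes $\Delta_k(z)L(-1)$, and the central term on the right vanishes because $(z+z_0)^{-2}$ is a power series in $z_0$ with no $z_0^{-1}$ coefficient. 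To evaluate the Virasoro term mode by mode, I would use the formal substitution $z_0 = z((1+w)^k - 1)$, equivalently $w = (1 + z_0/z)^{1/k} - 1$; under this, $y = z^{1/k} w$, $(z+z_0)^{2/k-2} = z^{2/k-2}(1+w)^{2-2k}$, and $dz_0 = zk(1+w)^{k-1}\, dw$. The residue of the $L(n)$-contribution then reduces to
\[
\mathrm{Res}_{z_0}\bigl[(z+z_0)^{2/k-2} y^{-n-2}\bigr] = k z^{-1-n/k}\, \mathrm{Res}_w (1+w)^{1-k} w^{-n-2} = k z^{-1-n/k}\binom{1-k}{n+1},
\]
which is nonzero only for $n \ge -1$. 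Isolating the $n=-1$ contribution, which reproduces $\frac{1}{k}z^{1/k-1}L(-1)\Delta_k(z)$, and relabeling $i = n+1$ in the remaining sum yields
\[
\Delta_k(z)L(-1) - \frac{1}{k}z^{1/k-1}L(-1)\Delta_k(z) = \frac{1}{k}\sum_{i\ge 1}\binom{-k+1}{i} z^{-1-(i-1)/k} L(i-1) \Delta_k(z),
\]
and the third identity of Proposition \ref{BDM-formulas} transforms the left side into $\frac{d}{dz}\Delta_k(z)$.

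The only delicate point is justifying the formal change of variables $z_0 \leftrightarrow w$ at the level of residues. Since $z_0(w) = zkw + O(w^2)$ has nonzero linear coefficient in $w$, the inverse series $w(z_0)$ exists as a formal power series, and the identity $\mathrm{Res}_{z_0} g(z_0) = \mathrm{Res}_w g(z_0(w)) z_0'(w)$ applies term-by-term; for $n \le -2$ the expansion of $y^{-n-2}$ in $z_0$ is a nonnegative power series, so those modes contribute nothing. Beyond this, the argument is straightforward binomial bookkeeping.
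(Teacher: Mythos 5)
Your proof is correct, and it takes a genuinely different route from the paper's. The paper proves the lemma exactly the way \cite{BDM} prove their Corollary 2.5: it passes to the formal realization $\Delta_k^x(z)$ acting on $\C[x,x^{-1}]$, with $L(j)$ replaced by the differential operator $-x^{j+1}\frac{\partial}{\partial x}$, computes $\Delta_k^x(z)x=z\bigl((1+z^{-1/k}x)^k-1\bigr)$ directly from the defining equation (\ref{adefine}) for the coefficients $a_n$, and then verifies the identity on every monomial $x^n$ using the multiplicativity $\Delta_k^x(z)x^n=(\Delta_k^x(z)x)^n$; representation-independence of such exponential identities then gives the statement on an arbitrary module $W$. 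You instead derive the identity entirely from the module-level formulas already recorded in Proposition \ref{BDM-formulas}: specializing the conjugation formula (\ref{delta-conjugation}) at $v=\omega$, discarding the central term (correctly, since $(z+z_0)^{-2}$ has no $z_0^{-1}$ coefficient), evaluating the Virasoro modes via the substitution $z+z_0=z(1+w)^k$ --- your computation $\mathrm{Res}_{z_0}\bigl[(z+z_0)^{2/k-2}y^{-n-2}\bigr]=kz^{-1-n/k}\binom{1-k}{n+1}$ checks out, including the vanishing for $n\le -2$ and the isolation of the $n=-1$ term $\frac{1}{k}z^{1/k-1}L(-1)\Delta_k(z)$ --- and then converting $\Delta_k(z)L(-1)-\frac{1}{k}z^{1/k-1}L(-1)\Delta_k(z)$ into $\frac{d}{dz}\Delta_k(z)$ by the third identity of Proposition \ref{BDM-formulas}. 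Your justification of the formal change of variables is the right one (the linear coefficient $zk$ of $z_0(w)$ is invertible, and for each fixed $n$ the series $y^{-n-2}$ has only finitely many negative powers of $z_0$), and there is no circularity, since all three inputs are quoted from \cite{BDM} independently of the lemma being proved. As for what each approach buys: yours stays on the module $W$ and needs nothing beyond results already stated in Proposition \ref{BDM-formulas}, at the cost of some residue bookkeeping; the paper's argument is elementary formal calculus on $\C[x,x^{-1}]$ that runs parallel to \cite{BDM}, so it can cite that framework wholesale and establishes the identity in a representation-independent operator form rather than only through the action of $Y_W(\omega,z_0)$.
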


\begin{proof} It is similar to that of Corollary 2.5 of \cite{BDM}.  As in \cite{BDM}
we set
\[\Delta_k^x (z) = \exp \Biggl( - \sum_{j \ge 1} a_j z^{- j/k} \Lx
\Biggr) k^{\Lo} z^{\left( - 1/k + 1 \right) \Lo}, \]
an element of $(\End \; \C[x, x^{-1}])[[z^{1/k}, z^{-1/k}]]$.
As in the proof of Corollary 2.5 of \cite{BDM}, it suffices to show that
\begin{equation}\label{newequation}
\frac{1}{k}\sum_{i\ge 1 }{-k+1\choose i}\left(-x^i\frac{\partial}{\partial x}\right)z^{-1-(i-1)/k}\Delta_{k}^x(z)=\frac{\partial}{\partial z}\Delta_{k}^x(z)
\end{equation}
in  $(\End \; \C[x, x^{-1}])
[[z^{1/k}, z^{-1/k}]].$

Note that
\begin{eqnarray*}
\Delta_k^x (z)x&=&\exp \Biggl( - \sum_{j\ge 1} a_j z^{- j/k} \Lx
\Biggr) k^{\Lo} z^{\left( - 1/k + 1 \right) \Lo} x\\
&=&\exp \Biggl( - \sum_{j\ge } a_j z^{- j/k} \Lx
\Biggr) k z^{ - 1/k + 1} x\\
&=&kz\exp \Biggl( - \sum_{j\ge 1} a_j z^{- j/k} \Lx
\Biggr)  z^{ - 1/k} x\\
&=&kz\exp \Biggl( - \sum_{j\ge 1} a_j (z^{- 1/k}x)^{j+1} \frac{\partial}{\partial z^{-1/k}x}
\Biggr) z^{ - 1/k} x\\
&=&kz\left(\frac{1}{k}(1+z^{-1/k}x)^k-\frac{1}{k}\right)\\
&=&z\left((1+z^{-1/k}x)^k-1\right),
\end{eqnarray*}
where for the second last equality we are using equation (\ref{adefine}).
Then
\begin{eqnarray*}
&&\frac{1}{k}\sum_{i\ge 1 }{-k+1\choose i}\left(-x^i\frac{\partial}{\partial x}\right)z^{-1-(i-1)/k}\Delta_{k}^x(z)x\\
&=&\frac{1}{k}\sum_{i\ge 1 }{-k+1\choose i}\left(-x^i\frac{\partial}{\partial x}\right)z^{-1-(i-1)/k}z\left((1+z^{-1/k}x)^k-1\right)\\
&=&-\sum_{i\ge 1 }{-k+1\choose i}x^iz^{-i/k}(1+z^{-1/k}x)^{k-1}\\
&=&-\sum_{i\ge 0 }{-k+1\choose i}x^iz^{-i/k}\left((1+z^{-1/k}x)^{k-1}\right)+(1+z^{-1/k}x)^{k-1}\\
&=&-1+(1+z^{-1/k}x)^{k-1}\\
&=&\frac{\partial}{\partial z}z\left((1+z^{-1/k}x)^k-1\right)\\
&=&\frac{\partial}{\partial z}\Delta_k^x (z)x.
\end{eqnarray*}

From \cite{BDM} we have that $\Delta_k^x (z)x^n = (\Delta_k^x (z) x)^n$ for $n\in\Z.$ Thus
\begin{eqnarray*}
&&\frac{1}{k}\sum_{i\ge 1 }{-k+1\choose i}\left(-x^i\frac{\partial}{\partial x}\right)z^{-1-(i-1)/k}\Delta_{k}^x(z)x^n\\
&=&\frac{1}{k}\sum_{i\ge 1 }{-k+1\choose i}\left(-x^i\frac{\partial}{\partial x}\right)z^{-1-(i-1)/k}z^n\left((1+z^{-1/k}x)^k-1\right)^n\\
&=&-n\sum_{i\ge 1 }{-k+1\choose i}x^iz^{-i/k}(1+z^{-1/k}x)^{k-1}z^{n-1}\left((1+z^{-1/k}x)^k-1\right)^{n-1}\\
&=&-n\sum_{i\ge 0 }{-k+1\choose i}x^iz^{-i/k}(1+z^{-1/k}x)^{k-1}z^{n-1}\left((1+z^{-1/k}x)^k-1\right)^{n-1}\\
&+&n(1+z^{-1/k}x)^{k-1}z^{n-1}\left((1+z^{-1/k}x)^k-1\right)^{n-1}\\
&=&-nz^{n-1}\left((1+z^{-1/k}x)^k-1\right)^{n-1}+n(1+z^{-1/k}x)^{k-1}z^{n-1}\left((1+z^{-1/k}x)^k-1\right)^{n-1}\\
&=&nz^{n-1}\left((1+z^{-1/k}x)^k-1\right)^{n-1}\left(-1+(1+z^{-1/k}x)^{k-1}\right)\\
&=&n(\Delta_k^x (z) x)^{n-1}\frac{\partial}{\partial z}\Delta_k^x (z)\\
&=&\frac{\partial}{\partial z}(\Delta_k^x (z) x)^n\\
&=&\frac{\partial}{\partial z}\Delta_k^x (z)x^n.
\end{eqnarray*}
Therefore,  equation (\ref{newequation}) holds.
\end{proof}

As the first main result of this section we have:

\begin{theorem}\label{p4.7}
Let $M,N$ and $W$ be weak $V$-modules and let $\sigma=(1\ 2\ \cdots k)\in \text{Aut}(V^{\otimes k})$.
Let ${\mathcal{Y}}(\cdot,z)$ be an intertwining operator of type $\binom{W}{M\ N}.$
 For $w\in M$, set
\begin{align}
\mathcal{\overline{Y}}(w,z)=\mathcal{Y}(\Delta_{k}(z)w,z^{1/k})\in (\text{Hom}(N,W))\{z\}.
\end{align}
Then
\begin{align}
\overline{\mathcal{Y}}(w,z)\in {\mathcal{H}}(T_{\sigma}(N),T_{\sigma}(W))\   \   \  \mbox{ for }w\in M.
\end{align}
Furthermore, the linear map $f:  M\otimes V^{\otimes (k-1)}\rightarrow
\mathcal{H}(T_{\sigma}(N),T_{\sigma}(W))$,
 defined by
\[
f(w\otimes a)=a_{-1}^{\mathcal{H}}\overline{\mathcal{Y}}(w,z)\   \   \   \mbox{ for }w\in M,\ a\in V^{\otimes (k-1)},
\]
 is a weak $V^{\otimes k}$-module homomorphism from $M\otimes V^{\otimes (k-1)}$ to
$\mathcal{H}(T_{\sigma}(N),T_{\sigma}(W)).$
\end{theorem}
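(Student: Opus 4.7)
The plan is to verify that $\overline{\mathcal{Y}}(w,x)$ lies in $\mathcal{H}(T_\sigma(N),T_\sigma(W))$ by checking (G1)--(G3), and then to prove $f$ is a $V^{\otimes k}$-module homomorphism using a key intertwining identity together with the commutativity provided by Lemma \ref{commutativity of bracket of u^i v^j}. Condition (G1) will follow from the lower truncation of $\mathcal{Y}$ once $\Delta_k(x)w$ is expanded as a series in $x^{1/k}$. For (G2), I will use $L_{T_\sigma}(-1)=\tfrac{1}{k}L(-k)$ on both twisted modules together with the commutator
\[
[L(-k),\mathcal{Y}(u,x_2)]=\sum_{i\ge 0}\binom{-k+1}{i}x_2^{-k+1-i}\mathcal{Y}(L(i-1)u,x_2)
\]
derived from the $\omega$-Jacobi identity for $\mathcal{Y}$, and match it against the chain-rule expansion of $\tfrac{d}{dx}\mathcal{Y}(\Delta_k(x)w,x^{1/k})$ by invoking Lemma \ref{newlemma}; the exponents $(-k+1-i)/k$ and $-1-(i-1)/k$ coincide, so the two sides agree term by term. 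For (G3), by Lemma \ref{generating-H} it suffices to verify weak commutativity with $Y_{T_\sigma(W)}(u^1,x_1)$ since $V^{\otimes k}$ is generated by $\bigcup_{j=0}^{k-1}\sigma^j\{u^1\mid u\in V\}$. Starting from the ordinary weak commutativity of $\mathcal{Y}$, substituting $x_1\mapsto z_1^{1/k}$, $x_2\mapsto z_2^{1/k}$, $v\mapsto\Delta_k(z_1)u$, $w'\mapsto\Delta_k(z_2)w$ and multiplying by $\prod_{j=1}^{k-1}(z_1^{1/k}-\eta_k^j z_2^{1/k})^n$ converts the compound factor into $(z_1-z_2)^n$ and yields (G3) for $u^1$ via Theorem \ref{BDM}.

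For the homomorphism property, the decisive step is the identity
\[
Y_{\mathcal{H}}(u^1,z)\overline{\mathcal{Y}}(w,x)=\overline{\mathcal{Y}}(Y_M(u,z)w,x)\qquad(u\in V,\ w\in M).
\]
I will obtain it by applying the Jacobi identity for $\mathcal{Y}$ with $v=\Delta_k(x_1)u$ and $w'=\Delta_k(x)w$, using Proposition \ref{BDM-formulas} to rewrite the iterate term $\mathcal{Y}(Y_M(\Delta_k(x_1)u,x_1^{1/k}-x^{1/k})\Delta_k(x)w,x^{1/k})$ as $\overline{\mathcal{Y}}(Y_M(u,x_1-x)w,x)$, and then identifying the substituted expression with $Y_{\mathcal{H}}(u^1,z)\overline{\mathcal{Y}}(w,x)$ via Proposition \ref{technical-1.2} after setting $x_1^{1/k}=(x+z)^{1/k}$. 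Given this identity, Lemma \ref{commutativity of bracket of u^i v^j} shows $Y_{\mathcal{H}}(u^1,z)$ commutes with $(\mathbf{1}\otimes a)^{\mathcal{H}}_{-1}$, so
\[
Y_{\mathcal{H}}(u^1,z)f(w\otimes a)=a^{\mathcal{H}}_{-1}\overline{\mathcal{Y}}(Y_M(u,z)w,x)=f(Y_{M^1}(u^1,z)(w\otimes a)).
\]
Compatibility with the action of $\mathbf{1}\otimes a_0$ for $a_0\in V^{\otimes(k-1)}$ will follow by showing that $\mathcal{Y}'(w\otimes a,x):=(\mathbf{1}\otimes a)^{\mathcal{H}}_{-1}\overline{\mathcal{Y}}(w,x)$ is an intertwining operator of type $\binom{T_\sigma(W)}{M^1\ T_\sigma(N)}$ (which reduces to the previous computations together with the iterate formula in the weak module $\mathcal{H}$) and then invoking Proposition \ref{isomorphism of V-hom and Intertwining}. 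Since such $u\otimes a_0$ span $V^{\otimes k}$ and $Y_{M^1}(u\otimes a_0,z)=Y_{M^1}(u^1,z)Y_{M^1}(\mathbf{1}\otimes a_0,z)$, this suffices.

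The principal technical obstacle is the variable bookkeeping: tracking exponents through the substitution $x\mapsto x^{1/k}$ and through the $\Delta_k$-conjugation, and choosing a single nonnegative integer $n$ that governs all the weak commutativity relations simultaneously. The key simplification is the factorization $\prod_{j=0}^{k-1}(z_1^{1/k}-\eta_k^j z_2^{1/k})=z_1-z_2$, which allows one to trade fractional powers for integer powers; once this is used consistently, both the $\mathcal{H}$-membership of $\overline{\mathcal{Y}}(w,x)$ and the key intertwining identity fall out by direct but careful manipulation.
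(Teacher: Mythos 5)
Your first half is sound and follows essentially the paper's route: (G1) and (G2) are verified exactly as in the paper (same use of $L_{T_{\sigma}}(-1)=\frac{1}{k}L(-k)$ and Lemma \ref{newlemma}, with the matching exponents $(-k+1-i)/k=-1-(i-1)/k$), and your (G3) argument by direct substitution plus the factorization $\prod_{j=0}^{k-1}(z_1^{1/k}-\eta_k^{j}z_2^{1/k})=z_1-z_2$ is a legitimate, slightly more elementary variant of the paper's appeal to the commutator formula of Lemma 3.3 of \cite{BDM} (formula (\ref{COM relation for inter W})); just note that your single exponent $n$ for the substituted arguments $\Delta_k(z_1)u$ and $\Delta_k(z_2)w$ requires these to be finite sums, which is fine for the coefficients of $\Delta_k(z_1)u\in V[z_1^{1/k},z_1^{-1/k}]$ but is exactly the point the citation of \cite{BDM} handles in general. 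Your derivation of the key identity $Y_{\mathcal{H}}(u^1,z)\overline{\mathcal{Y}}(w,x)=\overline{\mathcal{Y}}(Y_M(u,z)w,x)$ via the Jacobi identity with $\Delta_k$-conjugated arguments, the substitution $x_1^{1/k}=(x+z)^{1/k}$, and (\ref{delta-conjugation}) is the paper's computation verbatim. (Minor: Lemma \ref{commutativity of bracket of u^i v^j} covers only $u^i,v^j$ with $i\ne j$; for general $a\in V^{\otimes(k-1)}$ you should argue as the paper does, via the commutator formula in the weak module $\mathcal{H}$ and $(v^1)_ia=0$ for $i\ge 0$.)

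The genuine gap is in your treatment of the remaining tensor slots. You never compute how $Y_{\mathcal{H}}(u^j,z)$ acts on $\overline{\mathcal{Y}}(w,x)$ for $2\le j\le k$, and this is the nontrivial heart of the homomorphism property: since $Y_{T_{\sigma}(W)}(u^j,z)$ is obtained from $Y_{T_{\sigma}(W)}(u^1,z)$ by $z^{1/k}\mapsto \eta_k^{1-j}z^{1/k}$, the slots are not independent, and the paper must prove by a separate computation --- decomposing $u^j=\sum_r u^{j,r}$ into $\sigma$-eigencomponents, applying the quasi-commutator (\ref{eigenvector quasi commutator}), and using the averaging $\frac{1}{k}\sum_{r=0}^{k-1}\eta_k^{(j-1)r}=\delta_{j,1}$ --- that $(u^j)_n^{\mathcal{H}}\overline{\mathcal{Y}}(w,x)=\delta_{j,1}\overline{\mathcal{Y}}(u_nw,x)$ for $n\ge 0$. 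This vacuum-like property of $\overline{\mathcal{Y}}(w,x)$ with respect to $\C{\bf 1}\otimes V^{\otimes(k-1)}$ is what licenses, via Proposition 4.7.7/Corollary 4.7.8 of \cite{LL}, the conclusion that $w\otimes a\mapsto a_{-1}^{\mathcal{H}}\overline{\mathcal{Y}}(w,x)$ intertwines the $\C{\bf 1}\otimes V^{\otimes(k-1)}$-action; without it, the relation $({\bf 1}\otimes Y(a_0,z)a)_{-1}^{\mathcal{H}}\overline{\mathcal{Y}}(w,x)=Y_{\mathcal{H}}({\bf 1}\otimes a_0,z)({\bf 1}\otimes a)_{-1}^{\mathcal{H}}\overline{\mathcal{Y}}(w,x)$ simply fails for a general element of a weak module. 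Your proposed substitute is circular: by Proposition \ref{isomorphism of V-hom and Intertwining}, showing that $\mathcal{Y}'(w\otimes a,x)=a_{-1}^{\mathcal{H}}\overline{\mathcal{Y}}(w,x)$ is an intertwining operator is logically equivalent to showing that $f$ is a homomorphism, so invoking that proposition transfers the problem without solving it, and ``the previous computations together with the iterate formula in $\mathcal{H}$'' only concern $u^1$. Relatedly, while $Y_{M^1}(u\otimes a_0,z)=Y_{M^1}(u^1,z)Y_{M^1}({\bf 1}\otimes a_0,z)$ does hold on the tensor-product module $M^1$, no such same-point factorization is available a priori on the abstract weak module $\mathcal{H}$; the correct reduction is that $V^1$ and $\C{\bf 1}\otimes V^{\otimes(k-1)}$ generate $V^{\otimes k}$ as a vertex algebra, which again requires the homomorphism property for the $j\ge 2$ slots --- i.e., exactly the missing vacuum-like computation.
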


\begin{proof} First, we show that $\overline{\mathcal{\mathcal{Y}}}(w,z)\in\mathcal{H}(T_{\sigma}(N),T_{\sigma}(W))$
for $w\in M$.  It is clear that $\overline{\mathcal{\mathcal{Y}}}(w,z)$ satisfies the condition (G1).  For condition (G2),
note that the following relations hold  for any $w\in M,\ m\in \Z$:
\begin{align*}
&\mathcal{Y}(L(-1)w,z)=\frac{d}{dz}\mathcal{Y}(w,z),\\
&[L(m),\mathcal{Y}(w,z)]=\sum_{i\ge 0}{m+1\choose i}z^{m+1-i}\mathcal{Y}(L(i-1)w,z).
\end{align*}
Using these relations and Lemma \ref{newlemma} we have
\begin{eqnarray*}
&&[L_{T_{\sigma}}(-1),\overline{\mathcal{\mathcal{Y}}}(w,z)]\\
&=&\frac{1}{k}\left[L(-k),\mathcal{Y}(\Delta_{k}(z)w,z^{1/k})\right]\\
&=&\frac{1}{k}\sum_{i\ge 0}{-k+1\choose i}(z^{1/k})^{1-k-i}\mathcal{Y}(L(i-1)\Delta_{k}(z)w,z^{1/k})\\
&=&\frac{1}{k}z^{\frac{1}{k}-1}\mathcal{Y}(L(-1)\Delta_{k}(z)w,z^{1/k})
+
\frac{1}{k}\sum_{i\ge 1 }{-k+1\choose i}z^{-1-(i-1)/k}\mathcal{Y}(L(i-1)\Delta_{k}(z)w,z^{1/k})\\
&=&\frac{1}{k}z^{\frac{1}{k}-1}\left(\frac{\partial}{\partial y^{1/k}}\mathcal{Y}(\Delta_{k}(z)w,y^{1/k})\right)|_{y=z}\\
&&+
\frac{1}{k}\sum_{i\ge 1 }{-k+1\choose i}z^{-1-(i-1)/k}\mathcal{Y}(L(i-1)\Delta_{k}(z)w,z^{1/k})\\
&=&\left(\frac{\partial}{\partial y}\mathcal{Y}(\Delta_{k}(z)w,y^{1/k})\right)|_{y=z}+\mathcal{Y}\left(\frac{\partial}{\partial z}\Delta_{k}(z)w,z^{1/k}\right)\\
&=&\frac{d}{dz}\mathcal{Y}(\Delta_{k}(z)w,z^{1/k})\\
&=&\frac{d}{dz}\overline{\mathcal{\mathcal{Y}}}(w,z).
\end{eqnarray*}

For condition (G3), let $u\in V,\ w\in M$. By Lemma 3.3 in \cite{BDM}, we have
\begin{align}\label{COM relation for inter W}
 & Y_{T_{\sigma}(W)}(u^{1},z_{1})\mathcal{\overline{Y}}(w,z_{2})-\mathcal{\overline{Y}}(w,z_{2})Y_{T_{\sigma}(N)}(u^{1},z_{1})\nonumber \\
 =\ & \text{Res}_{z_{0}}\frac{1}{k}z_{2}^{-1}\delta\left(\frac{\left(z_{1}-z_{0}\right)^{1/k}}{z_{2}^{1/k}}\right)\overline{\mathcal{Y}}\left(Y_{M}(u,z_{0})w,z_{2}\right)\nonumber \\
 =\ & \text{Res}_{z_{0}}\frac{1}{k}\sum_{r=0}^{k-1}z_{2}^{-1}\delta\left(\frac{z_{1}-z_{0}}{z_2}\right)
 \left(\frac{z_{1}-z_{0}}{z_2}\right)^{-r/k}\overline{\mathcal{Y}}\left(Y_{M}(u,z_{0})w,z_{2}\right)\nonumber \\
 =\ & \text{Res}_{z_{0}}\frac{1}{k}\sum_{r=0}^{k-1}z_{1}^{-1}\delta\left(\frac{z_{2}+z_{0}}{z_1}\right)
 \left(\frac{z_{2}+z_{0}}{z_1}\right)^{r/k}\overline{\mathcal{Y}}\left(Y_{M}(u,z_{0})w,z_{2}\right).
\end{align}
It follows that there exists a nonnegative integer $n$ such that
\begin{align}
(z_1-z_2)^{n}Y_{T_{\sigma}(W)}(u^{1},z_{1})\mathcal{\overline{Y}}(w,z_{2})
=(z_1-z_2)^{n}\mathcal{\overline{Y}}(w,z_{2})Y_{T_{\sigma}(N)}(u^{1},z_{1}).
\end{align}
Since $\{ \sigma^{j}(u^1)\ |\  u\in V,\ 1\le j\le k\}$ generates $V^{\otimes k}$ as a vertex algebra,
by Lemma \ref{generating-H} we have $\mathcal{\overline{Y}}(w,z)\in \mathcal{H}(T_{\sigma}(N),T_{\sigma}(W))$.
Note that $V^{\otimes k}$ as a vertex algebra is generated by the subset $\{ u^{j}\ |\  u\in V,\ 1\le j\le k\}$.

Second, we show that $f$ is a homomorphism of weak $V$-modules where ${\mathcal{H}}(T_{\sigma}(N),T_{\sigma}(W))$
is viewed as a $V$-module through the embedding $\pi_1$ of $V$ into $V^{\otimes k}$.
Note that for $1\le i\le k$,
\begin{align*}
Y_{T_{\sigma}(W)}(u^{i},z_{1})=Y_{T_{\sigma}(W)}(\sigma^{i-1}u^{1},z_{1})
=\lim_{z_1^{1/k}\rightarrow \eta_k^{1-i}z_1^{1/k}}Y_{T_{\sigma}(W)}(u^{1},z_{1}).
\end{align*}
Then using (\ref{COM relation for inter W}) (by replacing $z_1^{1/k}$ with $\eta_k^{1-i}z_1^{1/k}$), we get
\begin{align}\label{quasi-commutator}
 & Y_{T_{\sigma}(W)}(u^{i},z_{1})\mathcal{\overline{Y}}(w,z_{2})-\mathcal{\overline{Y}}(w,z_{2})Y_{T_{\sigma}(N)}(u^{i},z_{1})\nonumber \\
=\ & \text{Res}_{z_{0}}\frac{1}{k}\sum_{r=0}^{k-1}z_{1}^{-1}\delta\left(\frac{z_{2}+z_{0}}{z_1}\right)
 \left(\frac{z_{2}+z_{0}}{z_1}\right)^{r/k}\eta_{k}^{r(i-1)}\overline{\mathcal{Y}}\left(Y_{M}(u,z_{0})w,z_{2}\right).
\end{align}

For $u\in V$, write
\begin{align}
u^{i}=u^{i,0}+u^{i,1}+\cdots +u^{i,k-1},
\end{align}
 where $u^{i,r} \in (V^{\otimes k})^r$  for $0\le r\le k-1$.  As
$$Y_{T_{\sigma}(W)}(u^{i,r},z_{1})\in z_1^{-\frac{r}{T}}({\rm End} T_{\sigma}(W))[[z_1,z_1^{-1}]],\   \   \   \
Y_{T_{\sigma}(N)}(u^{i,r},z_{1})\in z_1^{-\frac{r}{T}}({\rm End} T_{\sigma}(N))[[z_1,z_1^{-1}]],$$
from (\ref{quasi-commutator}) we get
\begin{align}
 & Y_{T_{\sigma}(W)}(u^{i,r},z_{1})\mathcal{\overline{Y}}(w,z_{2})
 -\mathcal{\overline{Y}}(w,z_{2})Y_{T_{\sigma}(N)}(u^{i,r},z_{1})\nonumber \\
=\  & \text{Res}_{z_{0}}\frac{1}{k}z_{1}^{-1}\eta_k^{(i-1)r}\left(\frac{z_{2}+z_{0}}{z_{1}}\right)^{r/k}
\delta\left(\frac{z_{2}+z_{0}}{z_{1}}\right)\mathcal{\overline{Y}}(Y_M(u,z_{0})w,z_{2}).
 \label{eigenvector quasi commutator}
\end{align}

As $\mathcal{Y}(\cdot,x)$ is an intertwining operator of type $\binom{W}{M\ N}$, we have
\begin{align}\label{Jacobi-Y-delta}
&z_{0}^{-1}\delta\left(\frac{x_{1}^{1/k}-x^{1/k}}{z_{0}}\right)
Y_W\left(\Delta_{k}(x_1)v,x_{1}^{1/k}\right)\mathcal{Y}\left(\Delta_{k}(x)w,x^{1/k}\right)\nonumber\\
 &\  \   -z_{0}^{-1}\delta\left(\frac{-x^{1/k}+x_{1}^{1/k}}{z_{0}}\right)
 \mathcal{Y}\left(\Delta_{k}(x)w,x^{1/k}\right)Y_N\left(\Delta_{k}(x_1)v,x_{1}^{1/k}\right)\nonumber\\
=\ \  &x^{-1/k}\delta\left(\frac{x_{1}^{1/k}-z_{0}}{x^{1/k}}\right)
\mathcal{Y}\left(Y_M(\Delta_{k}(x_{1})v,z_{0})\Delta_{k}(x)w,x^{1/k}\right).
\end{align}
Let $p$ be a nonnegative integer such that $z_0^{p}Y_M(\Delta_{k}(x_{1})v,z_{0})\Delta_{k}(x)w$
involves only nonnegative (integer) powers of $z_0$. Then we have
\begin{align*}
(x_1-x)^p Y_W\left(\Delta_{k}(x_1)v,x_{1}^{1/k}\right)\mathcal{Y}\left(\Delta_{k}(x)w,x^{1/k}\right)=
(x_1-x)^p \mathcal{Y}\left(\Delta_{k}(x)w,x^{1/k}\right)Y_N\left(\Delta_{k}(x_1)v,x_{1}^{1/k}\right),
\end{align*}
which is
\begin{align*}
(x_1-x)^p Y_{T_{\sigma}(W)}(v^1,x_{1})\overline{\mathcal{Y}}(w,x)=
(x_1-x)^p\overline{\mathcal{Y}}(w,x)Y_{T_{\sigma}(N)}(v^1,x).
\end{align*}
Then
\begin{align}\label{one-way}
z^pY_{\mathcal{H}}(v^1,z)\overline{\mathcal{Y}}(w,x)
=\left((x_1-x)^p Y_{T_{\sigma}(W)}(v^1,x_{1})\overline{\mathcal{Y}}(w,x)\right)|_{x_1^{1/k}=(x+z)^{1/k}}.
\end{align}
On the other hand, from (\ref{Jacobi-Y-delta}) we get
\begin{align*}
&x_1^{-1/k}\delta\left(\frac{x^{1/k}+z_{0}}{x_1^{1/k}}\right)
\left((x_1-x)^pY_{T_{\sigma}(W)}(v^1,x_{1})\overline{\mathcal{Y}}(w^1,x)\right)\nonumber\\
=\ &x_1^{-1/k}\delta\left(\frac{x^{1/k}+z_{0}}{x_1^{1/k}}\right)
(x_1-x)^p\mathcal{Y}\left(Y_M(\Delta_{k}(x_{1})v,z_{0})\Delta_{k}(x)w,x^{1/k}\right).
\end{align*}
Taking the residue with respect to $x_1^{1/k}$, we obtain
\begin{align}\label{middle-step1}
&\left((x_1-x)^pY_{T_{\sigma}(W)}(v^1,x_{1})\overline{\mathcal{Y}}(w,x)\right)|_{x_1^{1/k}=x^{1/k}+z_0}\nonumber\\
=\ &
\left((x_1-x)^p\mathcal{Y}\left(Y_M(\Delta_{k}(x_{1})v,z_{0})\Delta_{k}(x)w,x^{1/k}\right)\right)|_{x_1^{1/k}=x^{1/k}+z_0}.
\end{align}
Notice that for any
$$f(x,x_1,z_0)\in x^{\alpha}U((x^{1/T}))((x_1^{1/k},z_0))$$
with $\alpha$ a complex number, $U$ a vector space and $T$ a positive integer, we have
$$\left(f(x,x_1,z_0)|_{x_1^{1/k}=x^{1/k}+z_0}\right)|_{z_0=(x+z)^{1/k}-x^{1/k}}
=f(x,x_1,(x+z)^{1/k}-x^{1/k})|_{x_1^{1/k}=(x+z)^{1/k}}.$$
In view of this, applying substitution $z_0=(x+z)^{1/k}-x^{1/k}$ to (\ref{middle-step1}), we get
\begin{align}
&\left((x_1-x)^pY_{T_{\sigma}(W)}(v^1,x_{1})\overline{\mathcal{Y}}(w,x)\right)|_{x_1^{1/k}=(x+z)^{1/k}}\nonumber\\
=\ &
\left((x_1-x)^p\mathcal{Y}\left(Y_M(\Delta_{k}(x_{1})v,(x+z)^{1/k}-x^{1/k})\Delta_{k}(x)w,x^{1/k}\right)\right)|_{x_1^{1/k}=(x+z)^{1/k}}
\nonumber\\
=\ &z^p\mathcal{Y}\left(Y_M(\Delta_{k}(x+z)v,(x+z)^{1/k}-x^{1/k})\Delta_{k}(x)w,x^{1/k}\right).
\end{align}
Combining this with (\ref{one-way}), and then using (\ref{delta-conjugation}) we get
\begin{align}
Y_{\mathcal{H}}(v^1,z)\overline{\mathcal{Y}}(w,x)
=\ &\mathcal{Y}\left(Y_M(\Delta_{k}(x+z)v,(x+z)^{1/k}-x^{1/k})\Delta_{k}(x)w,x^{1/k}\right)\nonumber\\
=\ &\mathcal{Y}\left(\Delta_{k}(x)Y_{M}(v,z)w,x^{1/k}\right)\nonumber\\
=\ &\overline{\mathcal{Y}}(Y_{M}(v,z)w,x).
\end{align}
Furthermore, for any $v\in V,\ a\in \C{\bf 1}\otimes V^{\otimes (k-1)}$, we have
$$f(Y(v,z)w\otimes a)=a_{-1}^{\mathcal{H}}\overline{\mathcal{Y}}(Y_{M}(v,z)w,x)=a_{-1}^{\mathcal{H}}Y_{\mathcal{H}}(v^1,z)\overline{\mathcal{Y}}(w,x)=Y_{\mathcal{H}}(v^1,z)a_{-1}^{\mathcal{H}}\overline{\mathcal{Y}}(w,x)
=Y_{\mathcal{H}}(v^1,z)f(w\otimes a),$$
noticing that $[a_{m}^{\mathcal{H}},(v^1)_{n}^{\mathcal{H}}]=0$ for $m,n\in \Z$ as $(v^1)_{i}a=0$ for all $i\ge 0$.
This proves that $f$ is a homomorphism of weak $V$-modules.

Third, we show that $f$ is a homomorphism of weak $\C {\bf 1}\otimes V^{\otimes (k-1)}$-modules.
To this end, we first show that if $2\le i\le k$, then
\begin{align}\label{vacuum-like-Vi}
(u^{i})_{n}^{\mathcal{H}}\overline{\mathcal{Y}}(w,x)=0\   \   \   \mbox{ for all }u\in V,\  w\in M,\ n\ge 0.
\end{align}
Recall $$u^i=u^{i,0}+u^{i,1}+\cdots +u^{i,k-1},$$
where $u^{i,r}\in (V^{\otimes k})^{r}$ for $0\le r\le k-1$. By definition, we have
\begin{align*}
 & \left(u^{i,r}\right)_{n}^{\mathcal{H}}\overline{\mathcal{Y}}(w,x)\\
 & =\sum_{j\ge 0}\binom{-\frac{r}{k}}{j}\text{Res}_{x_{1}}x^{-\frac{r}{k}-j}x_1^{\frac{r}{T}}\left\{ (x_{1}-x)^{n+j}Y_{T_{\sigma}(W)}\left(u^{i,r},x_{1}\right)\overline{\mathcal{Y}}(w,x)-(-x+x_{1})^{n+j}\overline{\mathcal{Y}}(w,x)Y_{T_{\sigma}(N)}\left(u^{i,r},x_{1}\right)\right\}
\end{align*}
for $n\in \Z$. Assume $n\ge 0$, using (\ref{eigenvector quasi commutator}) we get
\begin{align*}
 & \left(u^{i}\right)_{n}^{\mathcal{H}}\overline{\mathcal{Y}}(w,x)\nonumber \\
 =\ & \sum_{r=0}^{k-1}\left(u^{i,r}\right)_{n}^{\mathcal{H}}\overline{\mathcal{Y}}(w,x)\nonumber\\
 =\ & \sum_{r=0}^{k-1}\text{Res}_{x_{1}}\sum_{j\ge 0}\binom{-\frac{r}{k}}{j}x^{-\frac{r}{k}-j}x_1^{\frac{r}{k}}
(x_{1}-x)^{n+j}\left(Y_{T_{\sigma}(W)}(u^{i,r},x_{1})\overline{\mathcal{Y}}(w,x)-\overline{\mathcal{Y}}(w,x)Y_{T_{\sigma}(N)}\left(u^{i,r},x_{1}\right)\right)\nonumber \\
 =\ & \text{Res}_{x_{0}}\text{Res}_{x_{1}}\sum_{r=0}^{k-1}\sum_{j\ge 0}\binom{-\frac{r}{k}}{j}x^{-\frac{r}{k}-j}x_1^{\frac{r}{k}}
 (x_{1}-x)^{n+j}\nonumber \\
 &\  \  \   \  \   \
  \cdot\frac{1}{k}\eta_k^{(1-i)r}x_1^{-1}\delta\left(\frac{x_{2}+x_{0}}{x_1}\right)\left(\frac{x_{2}+x_{0}}{x_1}\right)^{r/k}
  \overline{\mathcal{Y}}(Y_M(u,x_{0})w,x)\nonumber\\
  =\ & \text{Res}_{x_{0}}\text{Res}_{x_{1}}\sum_{r=0}^{k-1}\sum_{j\ge 0}\binom{-\frac{r}{k}}{j}x^{-\frac{r}{k}-j}x_1^{\frac{r}{k}}
 x_{0}^{n+j}\nonumber \\
 &\  \  \   \  \   \
  \cdot\frac{1}{k}\eta_{k}^{(i-1)r}x_1^{-1}\delta\left(\frac{x+x_{0}}{x_1}\right)\left(\frac{x+x_{0}}{x_1}\right)^{r/k}
  \overline{\mathcal{Y}}(Y_M(u,x_{0})w,x)\nonumber\\
 =\ & \text{Res}_{x_{0}}\text{Res}_{x_{1}}\sum_{r=0}^{k-1}\left(\frac{x+x_0}{x_1}\right)^{-r/k}
 x_{0}^{n}
  \cdot\frac{1}{k}\eta_k^{(i-1)r}x_1^{-1}\delta\left(\frac{x+x_{0}}{x_1}\right)\left(\frac{x+x_{0}}{x_1}\right)^{r/k}
 \overline{ \mathcal{Y}}(Y(u,x_{0})w,x)\nonumber\\
 =\ & \left(\frac{1}{k}\sum_{r=0}^{k-1}\eta_{k}^{(i-1)r}\right)\text{Res}_{x_{0}}x_0^n\overline{\mathcal{Y}}(Y_M(u,x_{0})w,x)
 \nonumber\\
 =\ &\delta_{i,1}\overline{\mathcal{Y}}(u_nw,x).\label{eq: vacuum-like vector for V^i}
\end{align*}
This proves (\ref{vacuum-like-Vi}).
Since $V^{i}$ for $2\le i\le k$ generate $\C {\bf 1}\otimes V^{\otimes (k-1)}$ as a vertex algebra,
from \cite{LL}, $\overline{\mathcal{Y}}(w,x)$ for $w\in M$ are vacuum-like vectors in
${\mathcal{H}}(T_{\sigma}(N),T_{\sigma}(W))$
viewed as a weak module for  $\C {\bf 1}\otimes V^{\otimes (k-1)}$ in the sense of \cite{Li-form}.
Then by (\cite{LL}; Proposition 4.7.7, or Corollary 4.7.8),
the linear map $f$ from $M\otimes V^{\otimes (k-1)}$ to ${\mathcal{H}}(T_{\sigma}(N),T_{\sigma}(N))$, defined by
$f(w\otimes a)=a_{-1}^{\mathcal{H}}\overline{\mathcal{Y}}(w,x)$ for $w\in M,\ a\in V^{\otimes (k-1)}$, is
a weak module homomorphism for
$\C {\bf 1}\otimes V^{\otimes (k-1)}$. As $V^1$ and $\C {\bf 1}\otimes V^{\otimes (k-1)}$ generate $V^{\otimes k}$ as a vertex algebra, it follows immediately that $f$ is a weak $V^{\otimes k}$-module homomorphism.
This completes the proof.
\end{proof}

Given $V$-modules $M,N$, and $W$, we have the space $I_{V}\binom{W}{M\ N}$ of  intertwining operators.
Furthermore,  we have $\sigma$-twisted $V^{\otimes k}$-modules  $T_{\sigma}(N)$ and $T_{\sigma}(W)$, and
we have the space $ I_{V^{\otimes k}}\binom{T_{\sigma}(W)}{M^{1}\ T_{\sigma}(N)}$
of intertwining operators of type $\binom{T_{\sigma}(W)}{M^{1}\ T_{\sigma}(N)}$.
On the other hand, in view of Theorem \ref{H is a weak module}, we have a weak $V^{\otimes k}$-module
$\left(\mathcal{H}(T_{\sigma}(N),T_{\sigma}(W)),Y_{\mathcal{H}}(\cdot,z)\right)$.
Combining Theorem \ref{p4.7} and Proposition \ref{isomorphism of V-hom and Intertwining}, we immediately have:

\begin{corollary}\label{untwisted-twisted}
For every intertwining operator $\mathcal{Y}(\cdot,z)$ of type $ \binom{W}{M\ N}$,
there exists an intertwining operator $\overline{\mathcal{Y}}(\cdot,z)$
of type $\binom{T_{\sigma}(W)}{M^{1}\ T_{\sigma}(N)}$, which is uniquely determined by
\begin{align}
\overline{\mathcal{Y}}(w^1,z)=\mathcal{Y}(\Delta_k(z)w,z^{1/k})\   \   \   \mbox{ for }w\in M.
\end{align}
\end{corollary}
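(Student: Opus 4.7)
The plan is to deduce this corollary as an essentially immediate combination of Theorem \ref{p4.7} and Proposition \ref{isomorphism of V-hom and Intertwining}. Theorem \ref{p4.7} has already established that the formal series
$$\overline{\mathcal{Y}}(w,z):=\mathcal{Y}(\Delta_k(z)w,z^{1/k})$$
lies in $\mathcal{H}(T_{\sigma}(N),T_{\sigma}(W))$ for every $w\in M$ and that the map $f:M\otimes V^{\otimes(k-1)}\to\mathcal{H}(T_{\sigma}(N),T_{\sigma}(W))$ defined by $f(w\otimes a)=a_{-1}^{\mathcal{H}}\overline{\mathcal{Y}}(w,z)$ is a homomorphism of weak $V^{\otimes k}$-modules. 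So the substantive work is done; what remains is to translate this module homomorphism back into an intertwining operator and read off its value at $w^1$.

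For the translation, I would apply Proposition \ref{isomorphism of V-hom and Intertwining} with the ambient vertex operator algebra taken to be $V^{\otimes k}$, the finite-order automorphism taken to be $\sigma$, and the triple of modules taken to be $(M^{1},T_{\sigma}(N),T_{\sigma}(W))$. That proposition supplies a canonical linear isomorphism
$$\theta:\,I_{V^{\otimes k}}\binom{T_{\sigma}(W)}{M^{1}\ T_{\sigma}(N)}\ \xrightarrow{\sim}\ \mathrm{Hom}_{V^{\otimes k}}\bigl(M^{1},\,\mathcal{H}(T_{\sigma}(N),T_{\sigma}(W))\bigr),\qquad I\mapsto f_I,$$
where $f_I(w')=I(w',z)$. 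Pulling the homomorphism $f$ back through $\theta$ produces a unique intertwining operator $\overline{\mathcal{Y}}(\cdot,z)$ of type $\binom{T_{\sigma}(W)}{M^{1}\ T_{\sigma}(N)}$ such that $\overline{\mathcal{Y}}(w',z)=f(w')$ for every $w'\in M^{1}$.

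Finally, to extract the stated formula I would specialize to $w'=w^1=w\otimes {\bf 1}^{\otimes(k-1)}$. By the definition of $f$ this gives $\overline{\mathcal{Y}}(w^1,z)=({\bf 1}^{\otimes(k-1)})_{-1}^{\mathcal{H}}\,\overline{\mathcal{Y}}(w,z)$, and since ${\bf 1}^{\otimes(k-1)}$ embedded into the last $k-1$ tensor slots of $V^{\otimes k}$ coincides with the vacuum ${\bf 1}^{\otimes k}$ of $V^{\otimes k}$, the weak-module property $Y_{\mathcal{H}}({\bf 1}^{\otimes k},z)=\mathrm{Id}$ supplied by Theorem \ref{H is a weak module} forces $({\bf 1}^{\otimes(k-1)})_{-1}^{\mathcal{H}}$ to act as the identity on $\mathcal{H}(T_{\sigma}(N),T_{\sigma}(W))$, yielding $\overline{\mathcal{Y}}(w^1,z)=\mathcal{Y}(\Delta_k(z)w,z^{1/k})$; the uniqueness of $\overline{\mathcal{Y}}(\cdot,z)$ is built into $\theta$. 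The only conceivable obstacle is this last vacuum-identification bookkeeping, and it is entirely mechanical, so I expect no real difficulty in this step.
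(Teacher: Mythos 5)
Your proposal is correct and takes essentially the same route as the paper, which obtains this corollary precisely by combining Theorem \ref{p4.7} with Proposition \ref{isomorphism of V-hom and Intertwining} (applied to the algebra $V^{\otimes k}$, the automorphism $\sigma$, and the triple $(M^{1},T_{\sigma}(N),T_{\sigma}(W))$) and stating the result as immediate. Your final bookkeeping step, identifying $({\bf 1}^{\otimes(k-1)})_{-1}^{\mathcal{H}}$ with the identity via $Y_{\mathcal{H}}({\bf 1}^{\otimes k},z)=\mathrm{Id}$ from Theorem \ref{H is a weak module}, correctly makes explicit the evaluation at $w^{1}$ that the paper leaves unstated.
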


In view of Corollary \ref{untwisted-twisted}, we have a linear map
\begin{align}\label{def-pi}
\pi: \  \  I_{V}\binom{W}{M\ N}\rightarrow  I_{V^{\otimes k}}\binom{T_{\sigma}(W)}{M^{1}\ T_{\sigma}(N)};
\  \ \mathcal{Y}(\cdot,z)\mapsto \overline{\mathcal{Y}}(\cdot,z).
\end{align}

\subsubsection{From $I_{V^{\otimes k}}\binom{T_{\sigma}(W)}{M^{1}\ T_{\sigma}(N)}$ to
$I_{V}\binom{W}{M\ N}$}

In this section, we shall prove that the linear map $\pi$ defined above from $I_{V}\binom{W}{M\ N}$ to
$I_{V^{\otimes k}}\binom{T_{\sigma}(W)}{M^{1}\ T_{\sigma}(N)}$ is an isomorphism by constructing its inverse map.
The arguments here are similar to those  in \cite{BDM} (Section 4).

Let $V,\ k$, and $\sigma$ be given as before. On the other hand, let $M, N$, and $W$ be $V$-modules.
Set $M^{1}=M\otimes V^{\otimes (k-1)}$, which is a $V^{\otimes k}$-module. Recall
the $\sigma$-twisted $V^{\otimes k}$-modules $T_{\sigma}(N)$ and $T_{\sigma}(W)$.

Recall $\Delta_k(z)$ from (\ref{delta-kx}). Here, we shall need its inverse
\begin{align}
\Delta_{k}(z)^{-1}=z^{(1-1/k)L(0)}k^{L(0)}\exp\left(-\sum_{j\ge 1}a_{j}z^{-j/k}L(j)\right).
\end{align}
As before, by convention we define $k^{\alpha}=e^{\alpha\ln k}$ and $(z^k)^{\alpha}=z^{k\alpha}$ for any $\alpha\in \C$.
Furthermore, for any formal series
$$A(x)=\sum_{\alpha\in \C}u_{\alpha}x^{\alpha}\in U\{x\}$$
with $U$ a vector space, we define
$$A(z^k)=\sum_{\alpha\in \C}u_{\alpha}z^{k\alpha}\in U\{z\}.$$
In particular, if $w$ is an $L(0)$-eigenvector in a $V$-module
 with eigenvalue $\alpha\in \C$, we have
$$z^{(1-1/k)L(0)}k^{L(0)}w=z^{(1-1/k)\alpha}k^{\alpha}w.$$
Notice that $\Delta_{k}(z^k)^{-1}{\bf 1}={\bf 1}$.

The following lemma follows from the proof of Lemma 4.1 in \cite{BDM}:

\begin{lemma}\label{L(-1) property}
Let $\overline{\mathcal{Y}}(\cdot,z)$ be an intertwining
operator of type $\binom{T_{\sigma}(W)}{M^{1}\ T_{\sigma}(N)}$.
For $w\in M$, set
$$\mathcal{Y}(w,z)=\overline{\mathcal{Y}}\left(\left(\Delta_{k}(z^{k})^{-1}w\right)^{1},z^{k}\right),$$
an element of $(\text{Hom}(N,W))\{ z\}$, where $\left(\Delta_{k}(z^{k})^{-1}w\right)^{1}=\Delta_{k}(z^{k})^{-1}w
\otimes {\bf 1}^{\otimes (k-1)}\in M^1\{z\}$. Then
\[
\mathcal{Y}(L(-1)w,z)=\frac{d}{dz}\mathcal{Y}(w,z)
\]
on $N$ for any $w\in M$. 
\end{lemma}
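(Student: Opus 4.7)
The plan is to differentiate $\mathcal{Y}(w,z)=\overline{\mathcal{Y}}((\Delta_{k}(z^{k})^{-1}w)^{1},z^{k})$ directly, using the chain rule (both arguments depend on $z$), and then to reduce the resulting identity to the conjugation formula for $L(-1)$ proved in Proposition \ref{BDM-formulas}. Crucially, on $M^{1}=M\otimes V^{\otimes(k-1)}$ the operator $L(-1)$ is the $L(-1)$-operator of the vertex algebra $V^{\otimes k}$, and because $L(-1)\mathbf{1}=0$, for any $u\in M$ one has $L(-1)_{M^{1}}(u^{1})=(L(-1)u)^{1}$. This will let every manipulation take place inside the first tensor slot only.

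First I would set $u(z)=\Delta_{k}(z^{k})^{-1}w\in M\{z\}$ and expand both sides formally in $z$. Using the $L(-1)$-derivative property of the intertwining operator $\overline{\mathcal{Y}}$, namely $\overline{\mathcal{Y}}(L(-1)a,z_{1})=\tfrac{d}{dz_{1}}\overline{\mathcal{Y}}(a,z_{1})$, together with the chain rule $\tfrac{d}{dz}(z^{k})=kz^{k-1}$, I obtain
\begin{align*}
\frac{d}{dz}\mathcal{Y}(w,z)
=\overline{\mathcal{Y}}\!\left(\!\Bigl(\tfrac{d}{dz}\Delta_{k}(z^{k})^{-1}w\Bigr)^{\!1},z^{k}\right)
+kz^{k-1}\,\overline{\mathcal{Y}}\!\left(\bigl(L(-1)\Delta_{k}(z^{k})^{-1}w\bigr)^{1},z^{k}\right).
\end{align*}
Meanwhile $\mathcal{Y}(L(-1)w,z)=\overline{\mathcal{Y}}((\Delta_{k}(z^{k})^{-1}L(-1)w)^{1},z^{k})$. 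Thus the assertion reduces to verifying the operator identity
\begin{align*}
\Delta_{k}(z^{k})^{-1}L(-1)=\tfrac{d}{dz}\Delta_{k}(z^{k})^{-1}+kz^{k-1}L(-1)\Delta_{k}(z^{k})^{-1}
\end{align*}
on $M$.

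To prove this identity I would start from the formula $\Delta_{k}(y)L(-1)-\tfrac{1}{k}y^{1/k-1}L(-1)\Delta_{k}(y)=\tfrac{d}{dy}\Delta_{k}(y)$ of Proposition \ref{BDM-formulas}, substitute $y=z^{k}$ (so $y^{1/k-1}=z^{1-k}$ and $\tfrac{d}{dz}\Delta_{k}(z^{k})=kz^{k-1}\tfrac{d}{dy}\Delta_{k}(y)|_{y=z^{k}}$), and multiply through by $kz^{k-1}$ to obtain
\begin{align*}
kz^{k-1}\Delta_{k}(z^{k})L(-1)-L(-1)\Delta_{k}(z^{k})=\tfrac{d}{dz}\Delta_{k}(z^{k}).
\end{align*}
Conjugating by $\Delta_{k}(z^{k})^{-1}$ on both sides and using the standard rule $\tfrac{d}{dz}(A^{-1})=-A^{-1}(\tfrac{d}{dz}A)A^{-1}$ for operator-valued formal series immediately yields the required identity.

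The only real subtlety is bookkeeping with the chain rule when both the first and second arguments of $\overline{\mathcal{Y}}$ depend on $z$, and making sure that the substitution $z\mapsto z^{k}$ in the $L(-1)$-conjugation formula is carried out correctly — once the operator identity above is in hand, the rest is a one-line substitution. Note also that $\Delta_{k}(z^{k})^{-1}$ preserves integer powers of $z$, so all of the formal series manipulations are legitimate in $(\mathrm{Hom}(N,W))\{z\}$.
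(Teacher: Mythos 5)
Your proof is correct and is essentially the paper's own argument: the paper proves this lemma by deferring to the proof of Lemma 4.1 of \cite{BDM}, which is exactly your computation --- differentiate $\overline{\mathcal{Y}}\bigl(\bigl(\Delta_{k}(z^{k})^{-1}w\bigr)^{1},z^{k}\bigr)$ by the chain rule (using $L(-1)\mathbf{1}=0$ so that the $L(-1)$ of $V^{\otimes k}$ acts only in the first tensor slot) and then invoke the $L(-1)$-conjugation formula $\Delta_{k}(z)L(-1)-\frac{1}{k}z^{1/k-1}L(-1)\Delta_{k}(z)=\frac{d}{dz}\Delta_{k}(z)$ of Proposition \ref{BDM-formulas} with $z\mapsto z^{k}$. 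Your conjugation step producing the reduced operator identity is legitimate because $\Delta_{k}(z^{k})^{\pm 1}$ acts on each vector as a finite sum (as the paper itself notes in the proof of Theorem \ref{t5.3}), so the formal product and inverse differentiation rules apply.
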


Furthermore, we have the following commutator formula:

\begin{lemma}\label{Commutator formula}
Let $\overline{\mathcal{Y}}(\cdot,z)$ be an intertwining operator of type
$\binom{T_{\sigma}(W)}{M^{1}\ T_{\sigma}(N)}$. Then
\[
Y_{W}(u, z_{1})\mathcal{Y}(w, z_{2})-\mathcal{Y}(w,z_{2})Y_{N}(u,z_{1})
=\text{Res}_{z_{0}}z_{2}^{-1}\delta\left(\frac{z_{1}-z_{0}}{z_{2}}\right)\mathcal{Y}(Y_{M}(u,z_{0})w,z_{2})
\]
for $u\in V$, $w\in M$, where $\mathcal{Y}(w,z)$ for $w\in M$ is defined as in Lemma \ref{L(-1) property}.
\end{lemma}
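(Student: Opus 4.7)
The plan is to derive the commutator formula by starting from the twisted Jacobi identity satisfied by $\overline{\mathcal{Y}}$ and executing a change of variables $z \leftrightarrow z^k$, combined with the $\Delta_k$-conjugation formula of Proposition \ref{BDM-formulas}; this reverses the construction carried out in the proof of Theorem \ref{p4.7}. I would apply the twisted Jacobi identity for $\overline{\mathcal{Y}}$ to the vector $u^1 \in V^{\otimes k}$, decomposed into its $\sigma$-eigenvector components $u^1 = \sum_{r=0}^{k-1} u^{1,r}$, acting on the source element $w'^1 := (\Delta_k(z_2^k)^{-1}w)^1 \in M^1$. Since $M^1$ is untwisted ($j_1/T_1 = 0$), taking the residue in $z_0$ yields
\[
Y_{T_\sigma(W)}(u^1, z_1)\overline{\mathcal{Y}}(w'^1, z_2) - \overline{\mathcal{Y}}(w'^1, z_2)Y_{T_\sigma(N)}(u^1, z_1) = \text{Res}_{z_0}\sum_{r=0}^{k-1} z_2^{-1} \!\left(\tfrac{z_1-z_0}{z_2}\right)^{\!-r/k}\!\delta\!\left(\tfrac{z_1-z_0}{z_2}\right) \overline{\mathcal{Y}}\bigl(Y_{M^1}(u^{1,r}, z_0) w'^1, z_2\bigr).
\]

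Next, I would make the substitutions $z_1 \mapsto z_1^k$, $z_2 \mapsto z_2^k$, $u \mapsto \Delta_k(z_1^k)^{-1} u$, and $z_0 \mapsto z_1^k - (z_1-z_0)^k$. Via the BDM formula $Y_{T_\sigma(W)}(v^1, y) = Y_W(\Delta_k(y) v, y^{1/k})$ and its analogue for $T_\sigma(N)$, the left-hand side becomes $Y_W(u, z_1)\mathcal{Y}(w, z_2) - \mathcal{Y}(w, z_2)Y_N(u, z_1)$. The $z_0$-substitution converts the fractional-power factor $(\frac{z_1^k - z_0}{z_2^k})^{1/k}$ into $(z_1-z_0)/z_2$; writing $u^{1,r} = \frac{1}{k}\sum_j \eta_k^{-jr} u^{1+j}$ and using the delta-function identity $\frac{1}{k}\sum_r \delta(\eta_k^{-r} y^{1/k}) = \delta(y)$ employed in the proof of Theorem \ref{p4.7}, the right-hand side decomposes into a main term ($j=0$) and ``secondary'' terms ($j\geq 1$). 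The Jacobian factor $k(z_1-z_0)^{k-1}$ introduced by the substitution cancels against $z_2^{k-1}$ on the support of the delta, leaving the clean prefactor $z_2^{-1}\delta((z_1-z_0)/z_2)$ for the main term. The key identity handling the main term is the $\Delta_k$-conjugation relation (Proposition \ref{BDM-formulas} specialized at $x = z_2^k$, $z = z_1^k - z_2^k$)
\[
\Delta_k(z_2^k)Y_M(\Delta_k(z_1^k)^{-1} u,\,z_1^k - z_2^k) w = Y_M(u, z_1 - z_2)\Delta_k(z_2^k) w,
\]
which converts $Y_{M^1}((\Delta_k(z_1^k)^{-1}u)^1, z_1^k - z_2^k)(\Delta_k(z_2^k)^{-1}w)^1$ into $(\Delta_k(z_2^k)^{-1}Y_M(u, z_1-z_2)w)\otimes {\bf 1}^{\otimes(k-1)}$, yielding the contribution $\text{Res}_{z_0} z_2^{-1}\delta((z_1-z_0)/z_2)\mathcal{Y}(Y_M(u, z_0)w, z_2)$, which is precisely the right-hand side of the desired commutator.

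The main obstacle I anticipate is handling the secondary terms ($j \geq 1$): these involve $\overline{\mathcal{Y}}$ applied to vectors of the form $\Delta_k(z_2^k)^{-1}w \otimes {\bf 1}^{\otimes(j-1)}\otimes e^{z_0 L(-1)}u \otimes {\bf 1}^{\otimes(k-1-j)}$, weighted by $\delta(\eta_k^j (z_1-z_0)/z_2)$ with $\eta_k^j \neq 1$. Showing that these secondary contributions vanish after the residue in $z_0$ (or reassemble trivially) requires careful bookkeeping of the $k$-th-root branches of the delta functions together with the $L(-1)$-derivative property of $\overline{\mathcal{Y}}$, parallel to the analysis in Section 4 of \cite{BDM} but adapted from the vertex-operator setting to the intertwining-operator setting.
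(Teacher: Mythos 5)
Your proposal follows the paper's proof essentially step for step: the same starting point (the twisted Jacobi identity for $\overline{\mathcal{Y}}$ applied to $u^{1}=\sum_{r}u^{1,r}$, with $j_{1}=0$ since $M^{1}$ is untwisted), the same substitutions $z_{i}\mapsto z_{i}^{k}$, $u\mapsto\Delta_{k}(z_{1}^{k})^{-1}u$, $z_{0}\mapsto z_{1}^{k}-(z_{1}-z_{0})^{k}$ with the Jacobian/delta-support cancellation $\tfrac{1}{k}\cdot k(z_{1}-z_{0})^{k-1}z_{2}^{-k}\delta\bigl(\tfrac{z_{1}-z_{0}}{z_{2}}\bigr)=z_{2}^{-1}\delta\bigl(\tfrac{z_{1}-z_{0}}{z_{2}}\bigr)$, and the same key identity $Y\bigl(\Delta_{k}((z_{2}+z_{0})^{k})^{-1}u,(z_{2}+z_{0})^{k}-z_{2}^{k}\bigr)\Delta_{k}(z_{2}^{k})^{-1}=\Delta_{k}(z_{2}^{k})^{-1}Y(u,z_{0})$ coming from Proposition \ref{BDM-formulas}.

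The one step you leave open --- the vanishing of the secondary $j\ge 1$ terms --- is much simpler than you anticipate, and it is in fact the very first line of the paper's proof rather than an endgame obstacle. Since $u_{n}{\bf 1}=0$ for $n\ge 0$, one has $(u^{1+j})_{n}w^{1}=0$ for $n\ge 0$ and $1\le j\le k-1$; equivalently $Y(u^{1+j},z_{0})w^{1}=w\otimes{\bf 1}^{\otimes(j-1)}\otimes e^{z_{0}L(-1)}u\otimes{\bf 1}^{\otimes(k-j-1)}$ involves only nonnegative powers of $z_{0}$. The accompanying delta factors $\delta\bigl(\eta_{k}^{j}(z_{1}-z_{0})^{1/k}/z_{2}^{1/k}\bigr)$ are likewise expanded in nonnegative powers of $z_{0}$ under the binomial convention, so $\mathrm{Res}_{z_{0}}$ annihilates every $j\ge 1$ term outright: no bookkeeping of $k$-th-root branches and no appeal to the $L(-1)$-derivative property of $\overline{\mathcal{Y}}$ is needed, only the creation property $Y(u,z_{0}){\bf 1}=e^{z_{0}L(-1)}u$. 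The paper packages this observation into the single formula (Lemma 3.3 of \cite{BDM}) $Y_{T_{\sigma}(W)}(u^{1},z_{1})\overline{\mathcal{Y}}(w^{1},z_{2})-\overline{\mathcal{Y}}(w^{1},z_{2})Y_{T_{\sigma}(N)}(u^{1},z_{1})=\mathrm{Res}_{z_{0}}\,\tfrac{1}{k}z_{2}^{-1}\delta\bigl((z_{1}-z_{0})^{1/k}/z_{2}^{1/k}\bigr)\overline{\mathcal{Y}}(Y(u^{1},z_{0})w^{1},z_{2})$ \emph{before} any change of variables, after which the computation runs exactly as you describe. With that observation inserted, your plan is a complete and correct proof along the paper's route.
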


\begin{proof} Let $u\in V,\ w\in M$. Noticing that
$$(\sigma^{j}u^1)_{n}w^1=(u^{j+1})_{n}w^1=0\   \   \   \mbox{ for all }1\le j\le k-1,\ n\ge 0, $$
from the twisted Jacobi identity (\ref{Twisted Intertwining}), we get
\begin{align*}
 & Y_{T_{\sigma}(W)}(u^{1},z_{1})\mathcal{\overline{Y}}(w^{1},z_{2})
 -\mathcal{\overline{Y}}(w^{1},z_{2})Y_{T_{\sigma}(N)}(u^{1},z_{1})\\
=\ &\text{Res}_{z_{0}}\frac{1}{k}z_{2}^{-1}\delta\left(\frac{\left(z_{1}-z_{0}\right)^{1/k}}{z_{2}^{1/k}}\right)\overline{\mathcal{Y}}\left(Y(u^{1},z_{0})w^{1},z_{2}\right).
\end{align*}
Then we have (see \cite{BDM}, Section 4)
\begin{alignat*}{1}
 & Y_{W}(u, z_{1})\mathcal{Y}(w,z_{2})-\mathcal{Y}(w, z_{2})Y_{N}(u,z_{1})\\
 =\ & Y_{T_{\sigma}(W)}\left(\left(\Delta_{k}\left(z_{1}^{k}\right)^{-1}u\right)^{1},z_{1}^{k}\right)\overline{\mathcal{Y}}\left(\left(\Delta_{k}\left(z_{2}^{k}\right)^{-1}w\right)^{1},z_{2}^{k}\right)\\
 & -\overline{\mathcal{Y}}\left(\left(\Delta_{k}\left(z_{2}^{k}\right)^{-1}w\right)^{1},z_{2}^{k}\right)
 Y_{T_{\sigma}(N)}\left(\left(\Delta_{k}(z_{1}^{k})^{-1}u\right)^{1},z_{1}^{k}\right)\\
=\ & \text{Res}_{x}\frac{1}{k}z_{2}^{-k}\delta\left(\frac{\left(z_{1}^{k}-x\right)^{1/k}}{z_{2}}\right)\overline{\mathcal{Y}}\left(Y\left(\left(\Delta_{k}\left(z_{1}^{k}\right)^{-1}u\right)^{1},x\right)\left(\Delta_{k}(z_{2}^{k})^{-1}w\right)^{1},z_{2}^{k}\right).
\end{alignat*}
We are going to use the change-of-variable $x=z_{1}^{k}-(z_{1}-z_{0})^{k}$.
Note that for any $n\in\mathbb{Z}$,
\begin{align}
\left(z_{1}^{k}-x\right)^{n/k}|_{x=z_{1}^{k}-\left(z_{1}-z_{0}\right)^{k}}=(z_{1}-z_{0})^{n},
\end{align}
where by convention $(z_{1}^{k}-x)^{n/k}=\sum_{j\ge 0}\binom{n/k}{j}(-1)^jz_1^{n-kj}x^j$.
Using (3.12) in \cite{BDM}, we have
\begin{alignat*}{1}
 & Y_{W}(u,\ z_{1})\mathcal{Y}(w,\ z_{2})-\mathcal{Y}(w,\ z_{2})Y_{N}(u,\ z_{1})\\
 =\ &\text{Res}_{z_{0}}z_{2}^{-k}\left(z_{1}-z_{0}\right)^{k-1}\delta\left(\frac{z_{1}-z_{0}}{z_{2}}\right)\overline{\mathcal{Y}}\left(Y\left(\left(\Delta_{k}(z_{1}^{k})^{-1}u\right)^{1},z_{1}^{k}-(z_{1}-z_{0})^{k}\right)\left(\Delta_{k}\left(z_{2}^{k}\right)^{-1}w\right)^{1},z_{2}^{k}\right)\\
=\  & \text{Res}_{z_{0}}z_{2}^{-1}\delta\left(\frac{z_{1}-z_{0}}{z_{2}}\right)\overline{\mathcal{Y}}\left(Y\left(\left(\Delta_{k}(z_{1}^{k})^{-1}u\right)^{1},z_{1}^{k}-(z_{1}-z_{0})^{k}\right)\left(\Delta_{k}(z_{2}^{k})^{-1}w\right)^{1},z_{2}^{k}\right)\\
=\ & \text{Res}_{z_{0}}z_{2}^{-1}\delta\left(\frac{z_{1}-z_{0}}{z_{2}}\right)\overline{\mathcal{Y}}\left(\left(Y\left(\Delta_{k}\left(\left(z_{2}+z_{0}\right)^{k}\right)^{-1}u,(z_{2}+z_{0})^{k}-z_{2}^{k}\right)\Delta_{k}(z_{2}^{k})^{-1}w\right)^{1},z_{2}^{k}\right).
\end{alignat*}
Then it suffices to prove
\begin{equation}\label{Delta property}
Y\left(\Delta_{k}\left((z_{2}+z_{0})^{k}\right)^{-1}u,(z_{2}+z_{0})^{k}-z_{2}^{k}\right)\Delta_{k}(z_{2}^{k})^{-1}
=\Delta_{k}(z_{2}^{k})^{-1}Y(u,z_{0}),
\end{equation}
or equivalently
\[
\Delta_{k}(z_{2}^{k})Y\left(\Delta_{k}\left((z_{2}+z_{0})^{k}\right)^{-1}u,(z_{2}+z_{0})^{k}-z_{2}^{k}\right)\Delta_{k}(z_{2}^{k})^{-1}=Y(u,z_{0}).
\]
 This last relation follows from (4.24) (\cite{BDM}; Proposition 2.2)
by replacing $u,z$ and $z_{0}$ with $\Delta_{k}((z_{2}+z_{0})^{k})^{-1}u$,
$z_{2}^{k}$ and $(z_{2}+z_{0})^{k}-z_{2},$ respectively.
Now, the proof is complete.
\end{proof}

As the second main result of this section we have:

\begin{theorem}\label{t5.3}
Let $M$, $N$ and $W$ be $V$-modules and let $\sigma=(1\ 2\ \cdots k)\in \text{Aut }(V^{\otimes k}).$
Set $M^{1}=M\otimes V^{\otimes (k-1)}$.
Let $\overline{\mathcal{Y}}(\cdot,z)$ be any intertwining operator of type
$\binom{T_{\sigma}(W)}{M^{1}\ T_{\sigma}(N)}$. For $w\in M$, set
$$\mathcal{Y}(w,z)=\overline{\mathcal{Y}}\left(\left(\Delta_{k}(z^{k})^{-1}w\right)^{1},z^{k}\right),$$
an element of $(\text{Hom}(N,W))\{ z\} $. Then $\mathcal{Y}(\cdot,z)$
is an intertwining operator of type $ \binom{W}{M\ N}.$
\end{theorem}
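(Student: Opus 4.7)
The plan is to verify the three defining properties for $\mathcal{Y}(\cdot, z)$ to be an intertwining operator of type $\binom{W}{M\ N}$: the lower truncation condition, the $L(-1)$-derivative property, and the Jacobi identity. The $L(-1)$-derivative property is already Lemma \ref{L(-1) property}. The generalized-commutativity half of the Jacobi identity between $\mathcal{Y}$ and $Y_W$, $Y_N$ is Lemma \ref{Commutator formula}. What remains is to check (i) the lower truncation and (ii) the generalized-associativity (iterate) half; Proposition \ref{p2.14}, specialized to $g_1 = g_2 = g_3 = 1$, then packages (i)--(ii) together with the commutator formula into the full Jacobi identity.

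For (i), the key observation is the explicit formula $\Delta_k(z^k)^{-1} = z^{(k-1)L(0)}k^{L(0)}\exp\bigl(-\sum_{j \geq 1}a_j z^{-j}L(j)\bigr)$. For each $L(0)$-homogeneous $w \in M$ of weight $\alpha$, this exhibits $\Delta_k(z^k)^{-1}w$ as $k^{\alpha}z^{(k-1)\alpha}$ times a polynomial in $z^{-1}$ with coefficients in finitely many weight subspaces of $M$. Hence the series $\overline{\mathcal{Y}}((\Delta_k(x)^{-1}w)^1, x)n_1$ is lower truncated in $x$ by the lower-truncation clause of Definition \ref{Intertwining operator for twisted modules}, and the substitution $x = z^k$ preserves lower truncation; by linearity in $w$ this handles (i).

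For (ii), I would adapt the strategy already used in the proof of Lemma \ref{Commutator formula}. Apply the twisted generalized associativity for $\overline{\mathcal{Y}}$ to the pair $\bigl((\Delta_k((z_2+z_0)^k)^{-1}u)^1,\ (\Delta_k(z_2^k)^{-1}w)^1\bigr)$, use $Y_{T_\sigma(W)}((\Delta_k(z_1^k)^{-1}u)^1, z_1^k) = Y_W(u, z_1)$ to rewrite the left-hand side, perform the change of variable $x = z_1^k - (z_1-z_0)^k$, and then invoke the conjugation identity
\begin{equation*}
Y\bigl(\Delta_k((z_2+z_0)^k)^{-1}u,\ (z_2+z_0)^k - z_2^k\bigr)\Delta_k(z_2^k)^{-1} = \Delta_k(z_2^k)^{-1}Y(u, z_0),
\end{equation*}
which is (\ref{Delta property}) from the proof of Lemma \ref{Commutator formula}, to collapse the second slot into $\bigl(\Delta_k(z_2^k)^{-1}Y_M(u, z_0)w\bigr)^1$. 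The right-hand side is then exactly $\mathcal{Y}(Y_M(u, z_0)w, z_2)n_1$, up to the standard clearing-denominator factor $(z_2+z_0)^{n_0}$ for sufficiently large $n_0$, which is precisely the iterate formula demanded.

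The expected main obstacle is bookkeeping rather than any isolated identity: the interplay of the substitutions $z \mapsto z^k$, the nonpolynomial change of variable $x \mapsto z_1^k - (z_1-z_0)^k$, and the interaction of $\Delta_k(\cdot)^{\pm 1}$ with delta-function manipulations and the clearing-denominator factors must be tracked carefully so that all formal-series identities remain valid. Once (i) and (ii) are established, commutator plus iterate yields the Jacobi identity by Proposition \ref{p2.14}, completing the verification and, together with Theorem \ref{p4.7}, showing that the map $\pi$ of (\ref{def-pi}) is a linear isomorphism with inverse $\overline{\mathcal{Y}} \mapsto \mathcal{Y}$.
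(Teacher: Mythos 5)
Your proposal is correct and takes essentially the same route as the paper: the paper likewise reduces the claim to Lemma \ref{L(-1) property} (the $L(-1)$-property), Lemma \ref{Commutator formula} (the commutator half), and weak associativity, which it proves by applying the twisted weak associativity to the $\sigma$-eigencomponents $u^{1,r}$ of $u^{1}$, making the substitutions $x_{2}=z_{2}^{k}$, $x_{0}=(z_{0}+z_{2})^{k}-z_{2}^{k}$ (your $x=z_{1}^{k}-(z_{1}-z_{0})^{k}$ with $z_{1}=z_{2}+z_{0}$), and invoking the conjugation identity (\ref{Delta property}). The only details to supply beyond your sketch are the explicit decomposition $u^{1}=\sum_{r=0}^{k-1}u^{1,r}$ with a clearing exponent uniform in $r$ (and in the finitely many powers of $z$ appearing in $\Delta_{k}(z^{k})^{-1}u$ and $\Delta_{k}(z^{k})^{-1}w$), which the paper carries out explicitly, while your lower-truncation check is left implicit there.
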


\begin{proof} As the $L(-1)$-property has been given
in Lemma \ref{L(-1) property}, it remains to prove the Jacobi identity
which is equivalent to the commutator formula which was obtained in Lemma \ref{Commutator formula}
and the weak associativity which states that for any $u\in V,\  w\in M,$
$a\in N,$ there exists a nonnegative integer $n$ such that
\[
(z_{0}+z_{2})^{n}Y_{W}(u,z_{0}+z_{2})\mathcal{Y}(w,z_{2})a
=(z_{2}+z_{0})^{n}\mathcal{Y}(Y_{M}(u,z_{0})w, z_{2})a.
\]
Let $u\in V,\ w\in M$. Recall
$$u^{1}=u\otimes {\bf 1}^{\otimes (k-1)}\in V^{\otimes k},\   \   \  \
w^{1}=w\otimes {\bf 1}^{\otimes (k-1)}\in M^{1}.$$
We have $u^1=\sum_{r=0}^{k-1}u^{1,r}$, where $u^{i,r}\in (V^{\otimes k})^r$, i.e.,
$\sigma u^{1,r}=\eta_k^{r}u^{1,r}$.
By the twisted Jacobi identity, there exists a positive integer
$l$ such that for $n\ge l$,
\[
(x_{0}+x_{2})^{r/k+n}Y_{T_{\sigma}(W)}(u^{1,r},x_{0}+x_{2})\mathcal{\overline{Y}}(w^{1},x_{2})a
=(x_{2}+x_{0})^{r/k+n}\overline{\mathcal{Y}}\left(Y_{M^{1}}(u^{1,r},x_{0})w^{1},x_{2}\right)a
\]
for $r=0,1,\dots,k-1$. Using change-of-variables $x_{2}=z_{2}^{k}$ and $x_{0}=(z_{0}+z_{2})^{k}-z_{2}^{k}$, we obtain
\begin{gather*}
(z_{0}+z_{2})^{r+kn}Y_{T_{\sigma}(W)}\left(u^{1,r},(z_{0}+z_{2})^{k}\right)\mathcal{\overline{Y}}\left(w^{1},z_{2}^{k}\right)a
=(z_{2}+z_{0})^{r+kn}\mathcal{\overline{Y}}\left(Y_{M^{1}}\left(u^{1,r},(z_{0}+z_{2})^{k}-z_{2}^{k}\right)w^{1},z_{2}^{k}\right)a.
\end{gather*}
Thus there exists a positive integer $p$ such that
\begin{gather*}
(z_{0}+z_{2})^{n}Y_{T_{\sigma}(W)}\left(u^{1,r},(z_{0}+z_{2})^{k}\right)\mathcal{\overline{Y}}\left(w^{1},z_{2}^{k}\right)a
=(z_{2}+z_{0})^{n}\mathcal{\overline{Y}}\left(Y_{M^{1}}\left(u^{1,r},(z_{0}+z_{2})^{k}-z_{2}^{k}\right)w^{1},z_{2}^{k}\right)a
\end{gather*}
for $n\ge p,\ r=0,1,\dots,k-1$. Consequently, we have
\begin{gather*}
(z_{0}+z_{2})^{n}Y_{T_{\sigma}(W)}\left(u^{1},(z_{0}+z_{2})^{k}\right)\mathcal{\overline{Y}}\left(w^{1},z_{2}^{k}\right)a=(z_{2}+z_{0})^{n}\mathcal{\overline{Y}}\left(Y_{M^{1}}\left(u^{1},(z_{0}+z_{2})^{k}-z_{2}^{k}\right)w^{1},z_{2}^{k}\right)a
\end{gather*}
for $n\ge p$.

Notice that $\Delta_k(z^k)^{-1}u\in V[z,z^{-1}]$ and $\Delta_{k}(z^{k})^{-1}w\in M[z,z^{-1}]$.
(Both of them are finite sums.)
Then,  there exists a positive integer $q$ such that
\begin{align*}
&(z_{0}+z_{2})^{n}Y_{T_{\sigma}(W)}\left((\Delta_k(z_0+z_2)^{k})^{-1}u)^{1},(z_{0}+z_{2})^{k}\right)\mathcal{\overline{Y}}\left((\Delta_k(z_2^k)^{-1}w)^{1},z_{2}^{k}\right)a\\
=\ &(z_{2}+z_{0})^{n}\mathcal{\overline{Y}}\left(Y_{M^{1}}\left((\Delta_k(z_0+z_2)^{k})^{-1}u)^{1},(z_{0}+z_{2})^{k}-z_{2}^{k}\right)(\Delta_k(z_2^k)^{-1}w)^{1},z_{2}^{k}\right)a
\end{align*}
for $n\ge q$.
Therefore,  we obtain
\begin{align*}
 & (z_{0}+z_{2})^{n}Y_{W}(u,z_{0}+z_{2})\mathcal{\mathcal{Y}}(w,z_{2})a\\
 =\ & (z_{0}+z_{2})^{n}Y_{T_{\sigma}(W)}\left(\left(\Delta_{k}\left(\left(z_{0}+z_{2}\right)^{k}\right)^{-1}u\right)^{1},(z_{0}+z_{2})^{k}\right)\mathcal{\overline{Y}}\left(\left(\Delta_{k}\left(z_{2}^{k}\right)^{-1}w\right)^{1},z_{2}^{k}\right)a\\
=\ & \left(z_{0}+z_{2}\right)^{n}\overline{\mathcal{Y}}\left(Y_{M^{1}}\left(\left(\Delta_{k}\left((z_{2}+z_{0})^{k}\right)^{-1}u\right)^{1},(z_{2}+z_{0})^{k}-z_{2}^{k}\right)\left(\Delta\left(z_{2}^{k}\right)^{-1}w\right)^{1},z_{2}^{k}\right)a\\
=\ & (z_{0}+z_{2})^{n}\overline{\mathcal{Y}}\left(\left(\Delta_{k}\left(z_{2}^{k}\right)^{-1}Y(u,z_{0})w\right)^{1},z_{2}^{k}\right)a\\
=\ & (z_{0}+z_{2})^{n}\mathcal{Y}\left(Y_{M}(u,z_{0})w,z_{2}\right)a
\end{align*}
for $n\ge p$, where we are using (\ref{Delta property}) for the second last equality. This completes the proof.
\end{proof}

To summarize, we have:

\begin{corollary}\label{summary}
The linear map $\pi$ defined in (\ref{def-pi}) is a linear isomorphism
from  $I_{V}\binom{W}{M\ N}$ to $I_{V^{\otimes k}}\binom{T_{\sigma}(W)}{M^{1}\ T_{\sigma}(N)}$.
\end{corollary}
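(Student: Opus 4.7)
The plan is to exhibit a two-sided inverse for $\pi$. The candidate inverse $\pi'$ is handed to us directly by Theorem \ref{t5.3}: given any intertwining operator $\overline{\mathcal{Y}}$ of type $\binom{T_{\sigma}(W)}{M^{1}\ T_{\sigma}(N)}$, the formula
\[
\pi'(\overline{\mathcal{Y}})(w,z) := \overline{\mathcal{Y}}\bigl((\Delta_{k}(z^{k})^{-1}w)^{1},\, z^{k}\bigr), \qquad w\in M,
\]
produces an element of $I_{V}\binom{W}{M\ N}$. What remains is to check that the two compositions $\pi'\circ\pi$ and $\pi\circ\pi'$ are both the identity.

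The composition $\pi'\circ\pi=\mathrm{id}$ is a direct formal-calculus check. Starting from $\mathcal{Y}\in I_{V}\binom{W}{M\ N}$, I would substitute the defining formula $\pi(\mathcal{Y})(w^{1},z)=\mathcal{Y}(\Delta_{k}(z)w,z^{1/k})$ from Corollary \ref{untwisted-twisted} into the definition of $\pi'$. Using the cancellation $\Delta_{k}(z^{k})\Delta_{k}(z^{k})^{-1}=1$ and the standing convention $(z^{k})^{1/k}=z$, this collapses to $\pi'(\pi(\mathcal{Y}))(w,z)=\mathcal{Y}(w,z)$ for every $w\in M$. The analogous substitution for the reverse composition produces, for each $w\in M$,
\[
\pi(\pi'(\overline{\mathcal{Y}}))(w^{1},z) \;=\; \pi'(\overline{\mathcal{Y}})(\Delta_{k}(z)w,\,z^{1/k}) \;=\; \overline{\mathcal{Y}}\bigl((\Delta_{k}(z)^{-1}\Delta_{k}(z)w)^{1},\,z\bigr) \;=\; \overline{\mathcal{Y}}(w^{1},z).
\]
Thus $\pi(\pi'(\overline{\mathcal{Y}}))$ and $\overline{\mathcal{Y}}$ coincide on the subspace $\{w^{1}:w\in M\}$ of $M^{1}$.

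The main (non-computational) step is to upgrade this partial equality to an identity of intertwining operators on all of $M^{1}=M\otimes V^{\otimes(k-1)}$. For this I invoke Proposition \ref{isomorphism of V-hom and Intertwining}, which identifies intertwining operators of type $\binom{T_{\sigma}(W)}{M^{1}\ T_{\sigma}(N)}$ with $V^{\otimes k}$-module homomorphisms $M^{1}\to\mathcal{H}(T_{\sigma}(N),T_{\sigma}(W))$. The subset $\{w^{1}:w\in M\}$ generates $M^{1}$ as a $V^{\otimes k}$-module: indeed, any element $w\otimes a_{2}\otimes\cdots\otimes a_{k}$ is produced from $w^{1}=w\otimes{\bf 1}^{\otimes(k-1)}$ by repeatedly applying negative modes of elements $v^{j}$ with $2\le j\le k$, which act only on the last $k-1$ tensor factors and build up arbitrary vectors of $V^{\otimes(k-1)}$ from ${\bf 1}^{\otimes(k-1)}$. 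Hence two module homomorphisms that agree on $\{w^{1}:w\in M\}$ must coincide throughout $M^{1}$, which promotes the partial equality above to $\pi\circ\pi'=\mathrm{id}$ and completes the proof.
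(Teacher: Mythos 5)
Your proposal is correct and matches the paper's (largely implicit) argument: the paper obtains Corollary \ref{summary} by combining Corollary \ref{untwisted-twisted} and Theorem \ref{t5.3}, whose two $\Delta_{k}$-substitutions are manifestly mutually inverse, with the ``uniquely determined'' clause of Corollary \ref{untwisted-twisted} playing exactly the role of your final step (an intertwining operator of type $\binom{T_{\sigma}(W)}{M^{1}\ T_{\sigma}(N)}$ is determined by its values on $\{w^{1}: w\in M\}$, via Proposition \ref{isomorphism of V-hom and Intertwining} and the fact that these vectors generate $M^{1}$ as a $V^{\otimes k}$-module). You have merely made explicit the composition checks and the generation argument that the paper compresses into ``To summarize.''
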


\section{Tensor products of $V^{\otimes k}$-modules with permutation automorphism twisted $V^{\otimes k}$-modules}

In this section, we study tensor product using the results of Section 5
from the $k$-cycle $\sigma$ to an arbitrary permutation in $S_{k}$ and present the second main theorem
of this paper. Some of the results in this section are valid for arbitrary vertex operator algebras.

Let $V$ be a general vertex operator algebra for now. Recall Definition \ref{d2.13} for a tensor product
of a $g_1$-twisted $V$-module $M$ with a $g_2$-twisted $V$-module $N$.
As a convention,  we shall use $M\boxtimes N$ to denote a generic tensor product module,
provided that its existence is justified.

\begin{remark}\label{tensor-product-module-VOA}
Let $V$ be any vertex operator algebra. Note that for any $V$-module $(M,Y_{M})$,
 the pair $(M,Y_{M})$ is a tensor product of $V$ and $M$.
Furthermore, $(M,Y_{M}^{o})$ is a tensor product of $M$ and $V$, where $Y_M^o(\cdot,z)$ is defined by
$$Y_{M}^{o}(w,z)v=e^{zL(-1)}Y_{M}(v,-z)w\   \   \  \mbox{ for }v\in V,\ w\in M$$
(see \cite{FHL}). Thus $M\boxtimes V\simeq M\simeq V\boxtimes M$.
\end{remark}

\begin{remark}\label{tensor-product-module-tensorVOA}
Let $U$ and $V$ be vertex operator algebras and
let $(W,Y_{W})$ be a $U$-module and $(M,Y_M)$ a $V$-module.
Then it is straightforward to show that $W\otimes M$ is a tensor product module for $U\otimes V$-modules $W\otimes V$
and $U\otimes M$, where
\begin{align}
\mathcal{Y}(w\otimes v,z)(u\otimes m)=e^{zL(-1)}Y_{W}(v,-z)w\otimes Y_M(u,z)m
\end{align}
for $w\in W,\ v\in V,\ u\in U,\ m\in M$. That is, $W\otimes M\simeq (W\otimes V)\boxtimes (U\otimes M)$.
\end{remark}

Let $k$ be a positive integer. Recall from \cite{BDM} that for any $V$-module $W$,
we have a $\sigma$-twisted $V^{\otimes k}$-module
$T_{\sigma}(W)$ which equals $W$ as a vector space and
a $V$-module homomorphism from $W_1$ to $W_2$ is exactly the same as
 a $\sigma$-twisted $V^{\otimes k}$-module homomorphism from $T_{\sigma}(W_1)$ to $T_{\sigma}(W_2)$.

Let $M$ and $N$ be $V$-modules. Assume that
$(M\boxtimes N, \mathcal{Y})$ is a tensor product of $M$ and $N$. That is,
$M\boxtimes N$ is a $V$-module and $\mathcal{Y}$ is an intertwining operator of type $\binom{M\boxtimes N}{M\ N}$,
satisfying the universal property.
From Theorem \ref{p4.7}, we have  an intertwining operator $\overline{\mathcal{Y}}$
of type $\binom{T_{\sigma}(M\boxtimes N)}{M^1\ T_{\sigma}(N)}$. It follows from Corollary \ref{summary} that
$(T_{\sigma}(M\boxtimes N),\overline{\mathcal{Y}})$ is a tensor product of $M\otimes V^{\otimes (k-1)}$ with
$T_{\sigma}(N)$. Thus we have proved:

\begin{theorem}\label{t3.6}
Let $M$ and $N$ be $V$-modules. Suppose that there exists a tensor product $(M\boxtimes N, \mathcal{Y})$ of $M$ and $N$.
Then a tensor product of $M\otimes V^{\otimes (k-1)}$ with $T_{\sigma}(N)$ exists and
\[
(M\otimes V^{\otimes (k-1)})\boxtimes T_{\sigma}(N)\simeq T_{\sigma}(M\boxtimes N)
\]
In particular, we have $(M\otimes V^{\otimes(k-1)})\boxtimes T_{\sigma}(V)\simeq T_{\sigma}(M)$ and
Theorem A (in Introduction) is true.
\end{theorem}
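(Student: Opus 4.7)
The plan is to directly verify the universal property of Definition \ref{d2.13} for the pair $(T_\sigma(M\boxtimes N),\overline{\mathcal{Y}})$, where $\overline{\mathcal{Y}}:=\pi(\mathcal{Y})$ is obtained from the hypothesized tensor product intertwining operator $\mathcal{Y}$ of type $\binom{M\boxtimes N}{M\ N}$ via Corollary \ref{untwisted-twisted} (equivalently, Theorem \ref{p4.7}). That corollary guarantees that $\overline{\mathcal{Y}}$ is a genuine intertwining operator of type $\binom{T_\sigma(M\boxtimes N)}{M^1\ T_\sigma(N)}$, so this pair is a legitimate candidate, and only the universal property remains to be checked.

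To verify it, I would start with an arbitrary weak $\sigma$-twisted $V^{\otimes k}$-module $W$ together with an intertwining operator $I$ of type $\binom{W}{M^1\ T_\sigma(N)}$. First, Theorem \ref{BDM} supplies a weak $V$-module $\widetilde W$ with $W\simeq T_\sigma(\widetilde W)$, so after composing $I$ with such an isomorphism one may assume $W=T_\sigma(\widetilde W)$. Next, Corollary \ref{summary} asserts that $\pi$ is a linear isomorphism onto $I_{V^{\otimes k}}\binom{T_\sigma(\widetilde W)}{M^1\ T_\sigma(N)}$, so there is a unique intertwining operator $\mathcal{I}$ of type $\binom{\widetilde W}{M\ N}$ with $\pi(\mathcal{I})=I$. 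The universal property of $(M\boxtimes N,\mathcal{Y})$ then produces a unique $V$-homomorphism $\widetilde\psi:M\boxtimes N\to\widetilde W$ satisfying $\mathcal{I}=\widetilde\psi\circ\mathcal{Y}$.

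The last step is to transport $\widetilde\psi$ back through the $T_\sigma$-construction. Any $V$-homomorphism $\widetilde\psi:U_1\to U_2$ is automatically a $\sigma$-twisted $V^{\otimes k}$-homomorphism $T_\sigma(\widetilde\psi):T_\sigma(U_1)\to T_\sigma(U_2)$, because the defining formula $Y_{T_\sigma(U)}(u^1,z)=Y_U(\Delta_k(z)u,z^{1/k})$ makes $\widetilde\psi$ intertwine the vertex operators of $u^1$ for $u\in V$, and $\{\sigma^j u^1:u\in V,\ 0\le j<k\}$ generates $V^{\otimes k}$ as a vertex algebra. Moreover, the defining formula $\pi(\mathcal{Y})(w^1,z)=\mathcal{Y}(\Delta_k(z)w,z^{1/k})$ yields the naturality identity $\pi(\widetilde\psi\circ\mathcal{Y})=T_\sigma(\widetilde\psi)\circ\pi(\mathcal{Y})$ by inspection. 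Setting $\psi:=T_\sigma(\widetilde\psi)$ therefore gives $I=\pi(\widetilde\psi\circ\mathcal{Y})=\psi\circ\overline{\mathcal{Y}}$, and uniqueness of $\psi$ descends from uniqueness of $\widetilde\psi$ via the bijection between $V$-homomorphisms and $\sigma$-twisted $V^{\otimes k}$-homomorphisms built into Theorem \ref{BDM}. The ``in particular'' clause follows by specializing $N=V$ and invoking $M\boxtimes V\simeq M$ from Remark \ref{tensor-product-module-VOA}, and Theorem A is nothing but the main isomorphism rephrased under its own hypotheses.

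I anticipate no substantive obstacle beyond two routine bookkeeping points: the naturality of $\pi$ with respect to homomorphisms of the target $V$-module, which is visible at a glance from the defining formula for $\pi$, and the functoriality of $T_\sigma$, which amounts to unpacking Theorem \ref{BDM}. All the genuinely hard work has already been carried out in Theorem \ref{p4.7} (the construction of $\overline{\mathcal{Y}}$), Theorem \ref{t5.3} (the construction of the inverse assignment), and Corollary \ref{summary} (the combination into a linear isomorphism).
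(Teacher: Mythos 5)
Your proposal is correct and follows essentially the same route as the paper: the paper's proof likewise takes $\overline{\mathcal{Y}}=\pi(\mathcal{Y})$ from Theorem \ref{p4.7} and deduces the universal property from the bijectivity of $\pi$ in Corollary \ref{summary}. What you add — transporting an arbitrary test pair $(W,I)$ through Theorem \ref{BDM}, the naturality of $\pi$ under post-composition with $V$-homomorphisms, and the identification of $V$-homomorphisms with $\sigma$-twisted $V^{\otimes k}$-homomorphisms — is exactly the bookkeeping the paper leaves implicit in its one-line deduction.
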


Next, we shall generalize this result.
First, we formulate the following lemma which is straightforward to prove:

\begin{lemma}\label{sigma-tau}
Let $U$ be a vertex operator algebra and let $\sigma,\tau$ be automorphisms of $U$ with $\sigma$ of finite order.
Assume that $W_1,W_2$ are $\sigma$-twisted $U$-modules and
$W$ is a $V$-module on which $\tau$ acts such that
\begin{align}
\tau Y_{W}(u,z)w=Y_{W}(\tau (u),z)\tau w\  \   \  \mbox{ for }u\in U,\ w\in W.
\end{align}
If $I(\cdot,z)$ is an intertwining operator of type $\binom{W_2}{W\ W_1}$, then
$I^{\tau^{-1}}(\cdot,z)$ is an intertwining operator of type $\binom{W_2}{W^{\tau}\ W_1}$,
where $W^\tau=W$ as a vector space, $Y_{W^{\tau}}(u,z)=Y_{W}(\tau(u),z)$ for $u\in U$, and
$$I^{\tau^{-1}}(w,z)=I(\tau^{-1}(w),z)\   \   \   \mbox{ for }w\in W.$$
Furthermore, if $(P,\mathcal{Y})$ is a tensor product of $W$ and $W_1$, then
$(P,\mathcal{Y}^{\tau^{-1}})$ is a tensor product of $W^{\tau}$ and $W_1$. In particular,  we have
$W\boxtimes W_1\simeq W^{\tau}\boxtimes W_1$, provided that one of the two tensor products exists.
\end{lemma}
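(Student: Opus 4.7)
The plan is to observe that the hypothesis $\tau Y_W(u,z)w = Y_W(\tau(u),z)\tau w$ makes $\tau : W \to W^{\tau}$ an isomorphism of $U$-modules, after which the lemma reduces to the standard fact that intertwining operators transform covariantly under pre-composition with module isomorphisms.

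First I would check that $(W^{\tau}, Y_{W^{\tau}})$ is a $U$-module and that $\tau$ intertwines $Y_W$ with $Y_{W^{\tau}}$. Since $\tau \in \mathrm{Aut}(U)$ fixes $\omega$, the Virasoro (hence $L(-1)$, $L(0)$) action on $W^{\tau}$ coincides with that on $W$, and the Jacobi identity on $W^{\tau}$ reduces to that on $W$ via the substitution $u \mapsto \tau(u),\, v \mapsto \tau(v)$. The hypothesis then reads $Y_{W^{\tau}}(u,z)\tau w = \tau Y_W(u,z) w$, exhibiting $\tau : W \to W^{\tau}$ as a $U$-module isomorphism with inverse $\tau^{-1}$.

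Next I would verify that $I^{\tau^{-1}}(w, z) := I(\tau^{-1}(w), z)$ satisfies the three axioms of Definition \ref{Intertwining operator for twisted modules} for type $\binom{W_2}{W^{\tau}\ W_1}$. Lower truncation and the $L(-1)$-derivative property are immediate from linearity of $\tau^{-1}$ together with the fact that $L(-1)$ acts identically on $W$ and on $W^{\tau}$. For the twisted Jacobi identity, the hypothesis rewrites as $\tau^{-1} Y_W(\tau(u), z) = Y_W(u, z)\tau^{-1}$, and hence
\[
I^{\tau^{-1}}(Y_{W^{\tau}}(u, z_0) w, z_2) = I(\tau^{-1} Y_W(\tau(u), z_0) w, z_2) = I(Y_W(u, z_0) \tau^{-1} w, z_2).
\]
It follows that the Jacobi identity for $I^{\tau^{-1}}$ applied to $w$ is precisely the Jacobi identity for $I$ applied to $\tau^{-1} w$. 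Note that since both $W$ and $W^{\tau}$ are untwisted $U$-modules while $W_1, W_2$ remain $\sigma$-twisted, the eigenspace indexing used in the twisted Jacobi identity is the same in both cases.

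Finally, for the tensor-product statement, assume $(P, \mathcal{Y})$ is a tensor product of $W$ and $W_1$ and verify the universal property for $(P, \mathcal{Y}^{\tau^{-1}})$ viewed as a candidate tensor product of $W^{\tau}$ and $W_1$. Given any $\sigma$-twisted $U$-module $W_3$ and any intertwining operator $J$ of type $\binom{W_3}{W^{\tau}\ W_1}$, the operator $J^{\tau}(w, z) := J(\tau(w), z)$ is, by the same argument with $\tau$ in place of $\tau^{-1}$, an intertwining operator of type $\binom{W_3}{W\ W_1}$. The universal property of $(P, \mathcal{Y})$ then produces a unique $\psi: P \to W_3$ with $J^{\tau} = \psi \circ \mathcal{Y}$, equivalently $J = \psi \circ \mathcal{Y}^{\tau^{-1}}$, and uniqueness of $\psi$ is inherited. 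Hence $(P, \mathcal{Y}^{\tau^{-1}})$ is a tensor product of $W^{\tau}$ and $W_1$, yielding the isomorphism $W^{\tau} \boxtimes W_1 \simeq W \boxtimes W_1$ whenever either tensor product exists. There is essentially no obstacle here; the only point requiring care is the bookkeeping of the twisted Jacobi indices, which is routine because the twisting is carried entirely by $W_1$ and $W_2$.
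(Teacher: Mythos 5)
Your proposal is correct, and it is precisely the routine verification the paper has in mind: the paper states Lemma \ref{sigma-tau} without proof, declaring it ``straightforward to prove,'' and your argument --- viewing $\tau: W \to W^{\tau}$ as a module isomorphism (using $\tau(\omega)=\omega$ so the Virasoro actions agree and $\tau$ commutes with $L(-1)$), transporting the twisted Jacobi identity via $Y_{W}(u,z_0)\tau^{-1} = \tau^{-1}Y_{W^{\tau}}(u,z_0)$ with the twisting indices unchanged since $W$ and $W^{\tau}$ are untwisted, and transferring the universal property through $J \mapsto J^{\tau}$ --- fills in exactly the intended details. No gaps; the bookkeeping of eigenspace indices and the uniqueness inheritance are handled correctly.
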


We now in a position to prove Theorem B (in Introduction).


\begin{proof} First of all, combining Theorem \ref{t3.6} with Lemma \ref{sigma-tau}, we have
\begin{align*}
(V^{\otimes (i-1)}\otimes M\otimes V^{\otimes (k-i)})\boxtimes T_{\sigma}(N)\simeq T_{\sigma}(M\boxtimes N)
\end{align*}
for any $V$-module $M$ and for $1\le i\le k$. This shows that this theorem is true
if  for some $1\le r\le k$, $M_{i}=V$ for all $i\ne r$, as $M_{1}\boxtimes\cdots\boxtimes M_{k}\simeq M_r$.
For the general case, notice that as $M_i\boxtimes V\simeq M_i\simeq V\boxtimes M_i$ for $1\le i\le k$, we have
\[
M_{1}\otimes\cdots\otimes M_{k}\simeq \boxtimes_{i=1}^{k}(V^{\otimes(i-1)}\otimes M_{i}\otimes V^{\otimes(k-i)}),
\]
where we are also using Remark \ref{tensor-product-module-tensorVOA}.
Then the general case follows from  the special case and Theorem \ref{t3.5} (an associativity).
\end{proof}

Now, we consider an arbitrary permutation $\sigma\in S_{k}.$
Recall that a partition of $k$ is a sequence of positive integers $k_1,k_2,\dots,k_s$ such that
$$k_1\ge k_2\ge \cdots \ge k_s\ge 1\   \mbox{ and }\  k_1+\cdots +k_s=k.$$
Associated to each partition $\kappa=(k_1,k_2,\dots,k_s)$, we have a permutation
\begin{align}\label{sigma-kappa}
\sigma_{\kappa}:=\sigma_1\circ \sigma_2\circ \cdots \circ \sigma_s,
\end{align}
where $\sigma_1=(12\cdots k_1),\ \sigma_2=((k_1+1)\cdots (k_1+k_2))$, and so  on.
A classical fact is that every permutation in $S_k$ is conjugate to $\sigma_{\kappa}$
for a uniquely determined partition $\kappa$ of $k$.

\begin{remark}\label{conjugation-auto}
Let $U$ be a vertex operator algebra and let $\sigma, \tau$ be finite order automorphisms of $U$ such that
$\tau=\mu \sigma\mu^{-1}$ for some automorphism $\mu$ of $U$.
It is straightforward to show that a linear map
$Y_W(\cdot,z): U\rightarrow ({\rm End }W)\{z\}$ is a $\tau$-twisted $U$-module structure on $W$ if and only if
$Y_W^{\mu}(\cdot,z)$ is a $\sigma$-twisted $U$-module structure on $W$, where
$$Y_W^{\mu}(u,z)=Y_W(\mu (u),z)\   \   \  \mbox{ for }u\in U.$$
If $W$ is a $\tau$-twisted $U$-module,
we denote by $W^{\tau}$ the corresponding $\sigma$-twisted $U$-module.
On the other hand, if $W_1,W_2$ are $\tau$-twisted $U$-modules, a $\tau$-twisted $U$-module homomorphism from $W_1$ to $W_2$ is exactly the same  as a $\sigma$-twisted $U$-module homomorphism from
$W_1^{\mu}$ to $W_2^{\mu}$.
Therefore,  the category of $\tau$-twisted $U$-modules
is  isomorphic to the category of $\sigma$-twisted $U$-modules canonically.
\end{remark}

Furthermore, by a straightforward argument we have:

\begin{lemma}\label{module-tensor-twisted-module-intertwining-operator}
Let $V$ be any vertex operator algebra and let $\sigma$ and $\mu$ be automorphisms of $V$ with $\sigma$ of finite order.  Set $\tau=\mu\sigma\mu^{-1}$.
Assume that $W$ is a $V$-module and $T_1, T_2$ are $\tau$-twisted $V$-modules. Then an intertwining operator
of type $\binom{T_2}{W\ T_1}$ is exactly the same as an intertwining operator
of type $\binom{T_2^{\mu}}{W^{\mu}\ T_1^{\mu}}$.
\end{lemma}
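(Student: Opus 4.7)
The plan is to argue that the underlying vector spaces $W, T_1, T_2$ coincide with $W^{\mu}, T_{1}^{\mu}, T_{2}^{\mu}$ respectively, so a candidate intertwining operator is literally the same linear map $\mathcal{Y}(\cdot,z): W \to (\mathrm{Hom}(T_1, T_2))\{z\}$ in either setting. The lemma then amounts to showing that the two sets of defining axioms from Definition \ref{Intertwining operator for twisted modules} coincide.

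First I would dispose of the easy conditions. The lower truncation condition and the $L(-1)$-derivative property depend only on $\mathcal{Y}$ and on the Virasoro operator $L(-1)$, the latter arising from $Y(\omega, z)$. Since every automorphism of $V$ fixes $\omega$, we have $Y_{W^{\mu}}(\omega, z) = Y_W(\mu(\omega), z) = Y_W(\omega, z)$, and likewise on $T_1^{\mu}$ and $T_2^{\mu}$. So $L(-1)$ is unchanged and these axioms transfer for free.

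The main point is the twisted Jacobi identity. Since $W$ and $W^{\mu}$ are untwisted $V$-modules, only one eigenspace decomposition is relevant for each side: $V = \bigoplus_{r} \{v \in V : \tau v = e^{2\pi i r/T}v\}$ for the data $(W, T_1, T_2)$, and $V = \bigoplus_{r} \{v \in V : \sigma v = e^{2\pi i r/T}v\}$ for the data $(W^{\mu}, T_1^{\mu}, T_2^{\mu})$. The relation $\tau = \mu\sigma\mu^{-1}$ shows that $\mu$ carries the $\sigma$-eigenspace of eigenvalue $e^{2\pi i r/T}$ bijectively onto the corresponding $\tau$-eigenspace. Now if $v$ lies in the $\sigma$-eigenspace of weight $r$, substituting the definitions $Y_{T_i^{\mu}}(v, z) = Y_{T_i}(\mu(v), z)$ and $Y_{W^{\mu}}(v,z) = Y_W(\mu(v),z)$ into the Jacobi identity for $\mathcal{Y}$ with respect to $(W^{\mu}, T_{1}^{\mu}, T_{2}^{\mu})$ produces exactly the Jacobi identity for $\mathcal{Y}$ with respect to $(W, T_1, T_2)$ evaluated at $\mu(v)$, which lives in the $\tau$-eigenspace of the same weight $r$. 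Since $\mu$ is a bijection between these eigenspaces, the two families of identities are equivalent.

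This establishes that the two subsets of $\mathrm{Hom}(W, (\mathrm{Hom}(T_1, T_2))\{z\})$ singled out by the respective axioms coincide, and the bijection is literally the identity (with $\mu^{-1}$ giving the inverse correspondence after applying the involution $(\cdot)^{\mu}$ twice). I do not expect a genuine obstacle: the entire content is the observation that automorphisms preserve $\omega$ and that conjugation by $\mu$ intertwines the $\sigma$- and $\tau$-eigenspace decompositions of $V$.
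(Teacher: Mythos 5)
Your proof is correct and is exactly the ``straightforward argument'' the paper has in mind (the paper states Lemma \ref{module-tensor-twisted-module-intertwining-operator} without writing a proof): since $\mu(\omega)=\omega$ the $L(-1)$-derivative and truncation axioms are untouched, and because $\mu$ carries each $\sigma$-eigenspace bijectively onto the $\tau$-eigenspace of the same eigenvalue, substituting $Y^{\mu}(u,z)=Y(\mu(u),z)$ into the twisted Jacobi identity for $(W^{\mu},T_1^{\mu},T_2^{\mu})$ at $u$ reproduces the identity for $(W,T_1,T_2)$ at $\mu(u)$ with the same monodromy exponent $j/T$. Your observation that only the second twisting's eigenspace decomposition enters (since $W$ is untwisted, so $j_{1}/T_{1}=0$) correctly handles the one point where care is needed, so nothing is missing.
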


\begin{proposition}\label{module-tensor-twisted-module-congugation}
Let $V$ be any vertex operator algebra and let $\sigma$ and $\mu$ be automorphisms of $V$ with $\sigma$ of finite order.  Set $\tau=\mu\sigma\mu^{-1}$.
 Let $W$ be a $V$-module and $T$ a $\tau$-twisted $V$-module.
 Assume that there exists a tensor product $W^{\mu^{-1}}\boxtimes T$ (a $\tau$-twisted $V$-module).
Then there exists a tensor product $W\boxtimes T^{\mu}$ (a $\sigma$-twisted $V$-module) and
\begin{align}
W\boxtimes T^{\mu}\simeq (W^{\mu^{-1}}\boxtimes T)^{\mu}.
\end{align}
\end{proposition}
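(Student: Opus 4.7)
The plan is to transport the given tensor product across the ``conjugation by $\mu$'' equivalence of categories supplied by Remark \ref{conjugation-auto} and Lemma \ref{module-tensor-twisted-module-intertwining-operator}. Write $(P,\mathcal{Y})$ for a tensor product $W^{\mu^{-1}}\boxtimes T$, so that $P$ is a $\tau$-twisted $V$-module and $\mathcal{Y}$ is an intertwining operator of type $\binom{P}{W^{\mu^{-1}}\ T}$. The first step is to apply Lemma \ref{module-tensor-twisted-module-intertwining-operator} in order to regard $\mathcal{Y}$ as an intertwining operator of type $\binom{P^{\mu}}{(W^{\mu^{-1}})^{\mu}\ T^{\mu}}$. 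A direct check from the definition of the twist at the level of vertex operators gives $Y_{W}\bigl(\mu(\mu^{-1}(u)),z\bigr)=Y_{W}(u,z)$, so $(W^{\mu^{-1}})^{\mu}=W$ and $\mathcal{Y}$ becomes an intertwining operator of type $\binom{P^{\mu}}{W\ T^{\mu}}$, with $P^{\mu}$ a $\sigma$-twisted $V$-module by Remark \ref{conjugation-auto}.

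The second step is to verify the universal property of $(P^{\mu},\mathcal{Y})$ as a tensor product of $W$ and $T^{\mu}$. Given any $\sigma$-twisted $V$-module $Q$ and any intertwining operator $I$ of type $\binom{Q}{W\ T^{\mu}}$, I would apply Lemma \ref{module-tensor-twisted-module-intertwining-operator} in the other direction (using that $\sigma=\mu^{-1}\tau\mu$, so we may play the same lemma with the roles of $\sigma$ and $\tau$ swapped and with $\mu^{-1}$ in place of $\mu$) to reinterpret $I$ as an intertwining operator of type $\binom{Q^{\mu^{-1}}}{W^{\mu^{-1}}\ T}$. By the universal property of $(P,\mathcal{Y})$ there exists a unique $\tau$-twisted $V$-module homomorphism $\psi:P\to Q^{\mu^{-1}}$ such that $I=\psi\circ \mathcal{Y}$. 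Under the canonical identification in Remark \ref{conjugation-auto}, $\tau$-twisted homomorphisms $P\to Q^{\mu^{-1}}$ are exactly $\sigma$-twisted homomorphisms $P^{\mu}\to Q$, so $\psi$ is the required unique factorization. This establishes $W\boxtimes T^{\mu}\simeq (W^{\mu^{-1}}\boxtimes T)^{\mu}=P^{\mu}$.

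The argument is essentially formal once one grants that ``conjugation by $\mu$'' is an equivalence of categories preserving intertwining operators, and no substantive obstacle remains. The only point that requires some care is the bookkeeping of $\mu$ versus $\mu^{-1}$: in applying Lemma \ref{module-tensor-twisted-module-intertwining-operator} in the reverse direction one must verify that the hypothesis of the lemma (the conjugation relation between the two automorphisms, one of which has finite order) is satisfied, which holds because $\sigma=\mu^{-1}\tau\mu$ and $\sigma$ itself is of finite order. After that, the calculation $(T^{\mu})^{\mu^{-1}}=T$ and $(Q)^{\mu^{-1}}$ being $\tau$-twisted are both immediate from the definitions, completing the plan.
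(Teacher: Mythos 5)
Your proposal is correct and takes essentially the same route as the paper's own proof: both verify the universal property of $\left((W^{\mu^{-1}}\boxtimes T)^{\mu},\mathcal{Y}\right)$ by using Lemma \ref{module-tensor-twisted-module-intertwining-operator} to reinterpret an intertwining operator of type $\binom{Q}{W\ T^{\mu}}$ as one of type $\binom{Q^{\mu^{-1}}}{W^{\mu^{-1}}\ T}$, invoking the universal property of $W^{\mu^{-1}}\boxtimes T$, and then using Remark \ref{conjugation-auto} to view the resulting $\tau$-twisted homomorphism as a $\sigma$-twisted one. Your explicit bookkeeping (the checks $(W^{\mu^{-1}})^{\mu}=W$, $(T^{\mu})^{\mu^{-1}}=T$, the finite-order hypothesis when applying the lemma in reverse, and the uniqueness of the factorization) merely spells out steps the paper leaves implicit.
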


\begin{proof} Recall that $T^{\mu}$ is a $\sigma$-twisted $V$-module and
$W^{\mu^{-1}}$ is a $V$-module with $W$ as the underlying space,
 where the vertex operator map $Y_{W}^{\mu^{-1}}(\cdot,x)$ is given by
$$Y_{W}^{\mu^{-1}}(v,x)=Y_{W}(\mu^{-1}v,x)\   \   \  \  \mbox{ for }v\in V.$$
From definition, we have an intertwining operator $\mathcal{Y}$ of type
$\binom{W^{\mu^{-1}}\boxtimes T}{W^{\mu^{-1}}\ T}$, satisfying the universal property.
Let $P$ be any $\sigma$-twisted $V$-module and $I(\cdot,z)$ be any intertwining operator of type
of $\binom{P}{W\ T^{\mu}}$.
By Lemma \ref{module-tensor-twisted-module-intertwining-operator},
$I(\cdot,z)$ is also an intertwining operator of
type $\binom{P^{\mu^{-1}}}{W^{\mu^{-1}}\ T}$. Then there exists a $\tau$-twisted $V$-module homomorphism
$\psi$ from $W^{\mu^{-1}}\boxtimes T$ to $P^{\mu^{-1}}$ such that
$I(w,z)t=\psi (\mathcal{Y}(w,z)t)$ for $w\in W,\ t\in T$. Note that from Remark \ref{conjugation-auto},
$\psi$ is also a $\sigma$-twisted $V$-module homomorphism
from $(W^{\mu^{-1}}\boxtimes T)^{\mu}$ to $P$. This proves that
$((W^{\mu^{-1}}\boxtimes T)^{\mu},\mathcal{Y})$ is a tensor product of $W$ with $T^{\mu}$. Therefore, we have
$(W^{\mu^{-1}}\boxtimes T)^{\mu}\simeq W\boxtimes T^{\mu}$, as desired.
\end{proof}

\begin{remark}\label{tensor-twisted-module}
Let $V_1$ and $V_2$ be vertex operator algebras and let $\tau_1,\tau_2$ be finite order automorphisms
of $V_1,V_2$, respectively. Set $\tau=\tau_1\otimes \tau_2$, an automorphism of $V_1\otimes V_2$.
Then the same arguments of \cite{FHL} show that if $W_i$ is an irreducible $\tau_i$-twisted $V_i$-module for $i=1,2$,
then $W_1\otimes W_2$ is naturally an irreducible $\tau$-twisted $V_1\otimes V_2$-module
and on the other hand, every irreducible $\tau$-twisted $V_1\otimes V_2$-module is isomorphic to one in this form.
Furthermore, if $V_i$ is $\tau_i$-rational for $i=1,2$, then $V_1\otimes V_2$ is $\tau$-rational
(see \cite{BDM}, Lemma 6.1 and Proposition 6.2).
\end{remark}

As before, we view the symmetric group $S_k$ as an automorphism group of $V^{\otimes k}$.
From  Remarks \ref{conjugation-auto} and \ref{tensor-twisted-module} and Theorem \ref{BDM} (due to \cite{BDM}) we immediately have:

\begin{proposition}
Let $V$ be any vertex operator algebra and $\sigma\in S_k$ be any permutation. Suppose that $\mu \sigma\mu^{-1}=\sigma_{\kappa}$, where
$\mu\in S_k$ and $\kappa$ is the partition of $k$ which is uniquely determined by $\sigma$.
Then for any $s$ irreducible $V$-modules $N_{1},...,N_{s}$,
we have an irreducible $\sigma$-twisted $V^{\otimes k}$-module
\begin{align}
T_{\sigma}(N_1,\dots,N_s):=(T_{\sigma_{1}}(N_{1})\otimes\cdots\otimes T_{\sigma_{s}}(N_{s}))^{\mu},
\end{align}
where $\sigma_1,\dots,\sigma_s$ are disjoint cycles as in (\ref{sigma-kappa}),
and on the other hand, every irreducible $\sigma$-twisted
$V^{\otimes k}$-module is isomorphic to one in this form. Furthermore, for
irreducible $V$-modules $N_{1},...,N_{s}$ and $P_1,\dots,P_s$,
$T_{\sigma}(N_1,\dots,N_s)\simeq T_{\sigma}(P_1,\dots,P_s)$ if and only if $N_i\simeq P_i$ for $1\le i\le s$.
\end{proposition}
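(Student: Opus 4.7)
The plan is to reduce the arbitrary $\sigma$ case to the $\sigma_\kappa$ case and then transfer back via the conjugation equivalence already established in Remark~\ref{conjugation-auto}. First I would use the natural identification $V^{\otimes k} = V^{\otimes k_1}\otimes\cdots\otimes V^{\otimes k_s}$, under which $\sigma_\kappa$ decomposes as the tensor-product automorphism $\sigma_1\otimes\cdots\otimes\sigma_s$, with each $\sigma_i$ acting on the $V^{\otimes k_i}$ factor as a single $k_i$-cycle. By Theorem~\ref{BDM}, the complete list (up to isomorphism) of irreducible $\sigma_i$-twisted $V^{\otimes k_i}$-modules is $\{T_{\sigma_i}(N_i) \mid N_i \text{ irreducible } V\text{-module}\}$, and $T_{\sigma_i}(N_i)\simeq T_{\sigma_i}(N_i')$ iff $N_i\simeq N_i'$.

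Next I would apply Remark~\ref{tensor-twisted-module} inductively ($s-1$ times, with the two-factor statement as the base case) to combine these factorwise classifications into a classification of irreducible $\sigma_\kappa$-twisted $V^{\otimes k}$-modules. The conclusion is that every irreducible $\sigma_\kappa$-twisted $V^{\otimes k}$-module is isomorphic to $T_{\sigma_1}(N_1)\otimes\cdots\otimes T_{\sigma_s}(N_s)$ for irreducible $V$-modules $N_1,\dots,N_s$, with two such modules isomorphic iff the corresponding tensor factors are pairwise isomorphic. This in particular gives the desired classification in the special case $\sigma=\sigma_\kappa$ and $\mu=\mathrm{id}$.

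To pass to a general $\sigma\in S_k$, I would invoke Remark~\ref{conjugation-auto}: since $\mu\sigma\mu^{-1}=\sigma_\kappa$, the assignment $W\mapsto W^\mu$ is a canonical isomorphism from the category of $\sigma_\kappa$-twisted $V^{\otimes k}$-modules to the category of $\sigma$-twisted $V^{\otimes k}$-modules, preserving irreducibility and isomorphism classes. Applying this functor to the previous classification and using the definition $T_\sigma(N_1,\dots,N_s) = (T_{\sigma_1}(N_1)\otimes\cdots\otimes T_{\sigma_s}(N_s))^\mu$ immediately yields both the existence part (every irreducible $\sigma$-twisted module has this form) and the uniqueness part (the tuple of isomorphism classes $[N_i]$ is determined by the $\sigma$-twisted module).

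The main obstacle I anticipate is not conceptual but bookkeeping of two kinds. The first is the $s$-fold iteration of Remark~\ref{tensor-twisted-module} (stated in the excerpt for two factors), which is a routine induction but should be written cleanly to track the $\tau$-twisted tensor structure at each stage. The second, more delicate point is that the defining formula for $T_\sigma(N_1,\dots,N_s)$ depends a priori on the choice of $\mu$ conjugating $\sigma$ to $\sigma_\kappa$: two such $\mu$'s differ by an element of the centralizer of $\sigma_\kappa$ in $S_k$, which permutes cycles of equal length. One should check that this ambiguity only reorders the $N_i$ among equal-length blocks, so that the induced ordered tuple (modulo this block-reordering symmetry) is a well-defined invariant and the classification is genuinely canonical. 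Beyond these points no additional technical ingredient is required.
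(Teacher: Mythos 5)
Your proposal is correct and follows essentially the same route as the paper, which derives the proposition ``immediately'' from exactly the three ingredients you use: Theorem \ref{BDM} applied factorwise under the identification $V^{\otimes k}=V^{\otimes k_1}\otimes\cdots\otimes V^{\otimes k_s}$, the (iterated) tensor-product classification of Remark \ref{tensor-twisted-module}, and the conjugation equivalence of Remark \ref{conjugation-auto}. Your final worry about the choice of $\mu$ is a sensible observation but not needed for the statement as given, since $\mu$ is fixed in the hypothesis and the classification is asserted relative to that choice.
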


To present our general result, we also need the following result:

\begin{proposition}\label{modules-twisted-modules-tensor-product}
Let $V_1,\dots,V_s$ be vertex operator algebras, let $\tau_1,\dots,\tau_s$
be finite order automorphisms  of $V_1, \dots,V_s$, respectively. Assume that $V_i$ is $\tau_i$-rational for $1\le i\le s$. Set
$$\tau=\tau_1\otimes \cdots \otimes \tau_s\in \text{Aut }(V_1\otimes \cdots \otimes V_s).$$
Assume that $W_i$ is a $V_i$-module and $T_i$ is a $\tau_i$-twisted $V_i$-module for $i=1,\dots,s$ such that
$\dim I_{V_i}\binom{P_i}{W_i\ T_i}<\infty$  for every irreducible $\tau_i$-twisted $V_i$-module $P_i$ for $1\le i\le s$.
Then there exist tensor products $W_1\boxtimes T_1,\dots, W_s\boxtimes T_s$,
$(W_1\otimes\cdots \otimes  W_s)\boxtimes (T_1\otimes \cdots \otimes T_s)$, and
\begin{align}
(W_1\boxtimes T_1)\otimes \cdots \otimes (W_s\boxtimes T_s)
\simeq (W_1\otimes\cdots \otimes  W_s)\boxtimes (T_1\otimes \cdots \otimes T_s).
\end{align}
\end{proposition}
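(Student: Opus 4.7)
My plan is to reduce to the case $s=2$ by a straightforward induction using associativity of the ordinary tensor product of modules for tensor product vertex operator algebras (Remark \ref{tensor-product-module-tensorVOA}), and to settle the case $s=2$ by directly constructing the tensor product via the external tensor of intertwining operators.

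For the base case $s=2$, I first apply Remark \ref{tautological} separately to $(V_i,\tau_i,W_i,T_i)$ to obtain tensor products $(W_i\boxtimes T_i,\mathcal{Y}_i)$ for $i=1,2$. By Remark \ref{tensor-twisted-module}, $V_1\otimes V_2$ is $(\tau_1\otimes \tau_2)$-rational, and its irreducible $(\tau_1\otimes \tau_2)$-twisted modules are exactly the tensor products $P_1\otimes P_2$ with $P_i$ irreducible $\tau_i$-twisted $V_i$-modules. From Theorem \ref{ADL-twisted-version} applied in the twisted setting,
\begin{equation*}
I_{V_1\otimes V_2}\binom{P_1\otimes P_2}{W_1\otimes W_2\ T_1\otimes T_2}
\simeq I_{V_1}\binom{P_1}{W_1\ T_1}\otimes I_{V_2}\binom{P_2}{W_2\ T_2},
\end{equation*}
which is finite-dimensional by hypothesis; hence Remark \ref{tautological} applies once more to produce a tensor product of $W_1\otimes W_2$ with $T_1\otimes T_2$.

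To produce the isomorphism, Theorem \ref{ADL-twisted-version} gives that the external tensor $\mathcal{Y}_1\otimes \mathcal{Y}_2$ is an intertwining operator of type $\binom{(W_1\boxtimes T_1)\otimes (W_2\boxtimes T_2)}{W_1\otimes W_2\ T_1\otimes T_2}$. I verify that the pair $((W_1\boxtimes T_1)\otimes (W_2\boxtimes T_2),\ \mathcal{Y}_1\otimes \mathcal{Y}_2)$ satisfies the universal property of Definition \ref{d2.13}. Given any intertwining operator $I$ of type $\binom{M}{W_1\otimes W_2\ T_1\otimes T_2}$ with $M$ a $(\tau_1\otimes \tau_2)$-twisted $V_1\otimes V_2$-module, $(\tau_1\otimes \tau_2)$-rationality and additivity of intertwining operators in the third argument reduce the problem to the case $M = P_1\otimes P_2$ irreducible. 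Then Theorem \ref{ADL-twisted-version} writes $I$ as a finite sum $\sum_\alpha J^{(1)}_\alpha \otimes J^{(2)}_\alpha$ with $J^{(i)}_\alpha\in I_{V_i}\binom{P_i}{W_i\ T_i}$; each $J^{(i)}_\alpha$ factors uniquely through $\mathcal{Y}_i$ via a $V_i$-module homomorphism $\psi^{(i)}_\alpha: W_i\boxtimes T_i \to P_i$; the sum $\sum_\alpha \psi^{(1)}_\alpha \otimes \psi^{(2)}_\alpha$ is the required homomorphism, with uniqueness following from the injectivity half of Theorem \ref{ADL-twisted-version} together with the universal properties of each $W_i\boxtimes T_i$.

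The main obstacle is the rigorous reduction to irreducible target modules and the careful use of Theorem \ref{ADL-twisted-version} on both sides, once to identify the relevant spaces of intertwining operators as tensor products of such spaces, and once to produce the matching tensor factorization of any given $I$. Once $s=2$ is established, the inductive step $s\Rightarrow s+1$ is routine: apply the $s=2$ result to the pair $(V_1\otimes \cdots \otimes V_s,\ V_{s+1})$ with automorphisms $\tau_1\otimes \cdots \otimes \tau_s$ and $\tau_{s+1}$, where iterated application of Remark \ref{tensor-twisted-module} preserves the rationality hypothesis and iterated application of Theorem \ref{ADL-twisted-version} (combined with the inductive hypothesis) preserves the finiteness of the fusion-rule spaces needed to invoke Remark \ref{tautological} at each step.
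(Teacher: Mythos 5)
Your proposal is correct and follows essentially the same route as the paper's proof: induction to $s=2$, forming the external tensor $\mathcal{Y}_1\otimes\mathcal{Y}_2$ on $(W_1\boxtimes T_1)\otimes(W_2\boxtimes T_2)$, reducing to irreducible targets via $\tau$-rationality and Remark \ref{tensor-twisted-module}, factoring $I$ by Theorem \ref{ADL-twisted-version}, and assembling $\psi=\sum_{\alpha}\psi^{(1)}_{\alpha}\otimes\psi^{(2)}_{\alpha}$ from the universal properties of the $W_i\boxtimes T_i$. The only (harmless) deviation is your separate invocation of Remark \ref{tautological} to establish existence of $(W_1\otimes W_2)\boxtimes(T_1\otimes T_2)$, which the paper obtains for free once the universal property of $\bigl((W_1\boxtimes T_1)\otimes(W_2\boxtimes T_2),\,\mathcal{Y}_1\otimes\mathcal{Y}_2\bigr)$ is verified.
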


\begin{proof}
By induction, it suffices to prove that it is true for $s=2$. From Remark \ref{tautological},
there exist tensor products $W_1\boxtimes T_1$ and $W_2\boxtimes T_2$.
By definition, we have an intertwining operator $\mathcal{Y}_{1}$ of type
$\binom{W_1\boxtimes T_1}{W_1\ T_1}$ and an intertwining operator $\mathcal{Y}_{2}$ of type
$\binom{W_2\boxtimes T_2}{W_2\ T_2}$, satisfying the universal property. Define a linear map
$$\mathcal{Y}(\cdot,z)\cdot:  (W_1\otimes W_2)\otimes (T_1\otimes T_2)
\rightarrow \left((W_1\boxtimes T_1)\otimes (W_2\boxtimes T_2)\right)\{z\}$$
by
\begin{align}
\mathcal{Y}(w_1\otimes w_2,z)(t_1\otimes t_2)=\mathcal{Y}_{1}(w_1,z)t_1\otimes \mathcal{Y}_{2}(w_2,z)t_2
\end{align}
for $w_i\in W_i,\ t_i\in T_i$ with $i=1,2$. Then  $\mathcal{Y}$ is an intertwining operator of type
$\binom{(W_1\boxtimes T_1)\otimes (W_2\boxtimes T_2)}{W_1\otimes W_2\ T_1\otimes T_2}$
(see the comments right before Theorem \ref{ADL-twisted-version}).
We claim that $\left((W_1\boxtimes T_1)\otimes (W_2\boxtimes T_2), \mathcal{Y}\right)$ is a tensor product of
$W_1\otimes W_2$ and $T_1\otimes T_2$.
Let $T$ be any $\tau$-twisted $V_1\otimes V_2$-module and let $I$ be any
intertwining operator of type $\binom{T}{W_1\otimes W_2\ T_1\otimes T_2}$.
We must show that there exists a $\tau$-twisted $V_1\otimes V_2$-module homomorphism
$\psi$ from $(W_1\boxtimes T_1)\otimes (W_2\boxtimes T_2)$ to $T$ such that $I=\psi\circ \mathcal{Y}$.
As $V_1\otimes V_2$ is $\tau$-rational,
it suffices to consider the case with $T$ an irreducible $\tau$-twisted $V_1\otimes V_2$-module.
From Remark \ref{tensor-twisted-module}, $T\simeq P_{1}\otimes P_{2}$, where
$P_{i}$ is an irreducible $\tau_i$-twisted $V_i$-module for $i=1,2$.
As $\dim I_{V_i}\binom{P_i}{W_i\ T_i}<\infty$ by assumption,
from Theorem \ref{ADL-twisted-version} (due to \cite{ADL}),
$I=\sum_{j=1}^{r} I_{1,j}\otimes I_{2,j}$, where $I_{1,j}$ are intertwining operators of type
$\binom{P_{1}}{W_1\ T_1}$ and $I_{2,j}$ are intertwining operators of type
$\binom{P_{2}}{W_2\ T_2}$. By the universal property of $(W_i\boxtimes T_i,\mathcal{Y}_{i})$ for $i=1,2$,
there exist $\tau_i$-twisted $V_i$-module
homomorphisms $\psi_{i,j}: W_i\boxtimes T_i\rightarrow P_{i}$ such that
$I_{i,j}(\cdot,z)=\psi_{i,j}\circ \mathcal{Y}_{i}(\cdot,z)$ for $i=1,2$, $1\le j\le r$. Set
$\psi=\sum_{j=1}^r\psi_{1,j}\otimes \psi_{2,j}$, which is a $\tau$-twisted $V_1\otimes V_2$-module homomorphism
from $(W_1\boxtimes T_1)\otimes (W_2\boxtimes T_2)$ to $T$ such that $I=\psi\circ \mathcal{Y}$. This proves that
$((W_1\boxtimes T_1)\otimes (W_2\boxtimes T_2), \mathcal{Y})$ is a tensor product of $(W_1\otimes W_2)$ with $T_1\otimes T_2$. Thus tensor product
$(W_1 \otimes  W_2)\boxtimes (T_1\otimes T_2)$ exists
and $(W_1\boxtimes T_1)\otimes (W_2\boxtimes T_2)\simeq (W_1\otimes  W_2)\boxtimes (T_1\otimes T_2)$.
\end{proof}

We now prove Theorem C.


\begin{proof}  Using Proposition \ref{modules-twisted-modules-tensor-product} and
Theorem B, we obtain
\begin{align*}
&\left(M_{1}\otimes\cdots\otimes M_{k}\right)\boxtimes T_{\sigma_{\kappa}}(N_{1},N_2,\dots,N_s)\\
=\ &  \left(M_{1}\otimes\cdots\otimes M_{k}\right)\boxtimes
\left(T_{\sigma_1}(N_{1})\otimes \cdots \otimes T_{\sigma_s}(N_s)\right)\\
\simeq \ &\left(M^{[k_1]}\boxtimes T_{\sigma_1}(N_1)\right)\otimes \cdots \otimes
\left(M^{[k_s]}\boxtimes T_{\sigma_s}(N_1)\right)\\
\simeq \ &T_{\sigma_1}\left(M^{[k_1]}\boxtimes N_1\right)\otimes \cdots
\otimes T_{\sigma_s}\left(M^{[k_s]}\boxtimes N_s\right)\\
=\ & T_{\sigma_\kappa}\left( (M^{[k_1]}\boxtimes N_1),\dots, (M^{[k_s]}\boxtimes N_s) \right),
\end{align*}
proving the first assertion.
Then using Proposition \ref{module-tensor-twisted-module-congugation} we get the second assertion.
\end{proof}

\vskip10pt {\footnotesize{}{ }\textbf{\footnotesize{}C. Dong}{\footnotesize{}:
Department of Mathematics, University of California Santa Cruz, CA 95064 USA; }\texttt{dong@ucsc.edu}{\footnotesize \par}

\textbf{\footnotesize{}H. Li}{\footnotesize{}:  Department of Mathematical Sciences, Rutgers University, Camden, NJ 08102 USA; }\texttt{hli@camden.rutgers.edu}{\footnotesize \par}

\textbf{\footnotesize{}F. Xu}{\footnotesize{}:  Department of Mathematics, University of California, Riverside, CA 92521 USA; }\texttt{xufeng@math.ucr.edu}{\footnotesize \par}

\textbf{\footnotesize{}N. Yu}{\footnotesize{}: School of Mathematical
Sciences, Xiamen University, Fujian, 361005, CHINA;} \texttt{
ninayu@xmu.edu.cn}{\footnotesize \par}

\end{document}